\newtheorem{thm}{Theorem}[section]
\newtheorem{lem}[thm]{Lemma}
\newtheorem{prop}[thm]{Proposition}
\newtheorem{proposition}[thm]{Proposition}
\newtheorem{df}[thm]{Definition}
\newtheorem{rem}[thm]{Remark}
\newtheorem{assumption}[thm]{Assumption}
\numberwithin{equation}{section}
\def\be#1 {\begin{equation} \label{#1}}
\newcommand{\ee}{\end{equation}}
\def\R{{\mathbb R}}
\def\e{{\varepsilon}}
\def\s{{\sigma}}
\def\l{{\lambda}}
\def\k{{\kappa}}
\def\max{\mathrm{max}}
\def\min{\mathrm{min}}
\def\supp{\,\mbox{supp}\,}
\def\jxi{\langle \xi \rangle}
\def\jeta{\langle \eta \rangle}
\def\jsig{\langle \sigma \rangle}
\def\jt{\langle t \rangle}
\def\js{\langle s \rangle}
\def\jx{\langle x \rangle}
\def\what{\widehat}
\def\wt{\widetilde}
\def\bar{\overline}
\def\eps{\epsilon}
\def\pv{\mathrm{p.v.}}
\def\wtF{\widetilde{\mathcal{F}}}
\def\whatF{\widehat{\mathcal{F}}}
\DeclareMathAlphabet{\mathpzc}{OT1}{pzc}{m}{it}
\numberwithin{equation}{section}
\newcommand{\pushright}[1]{\ifmeasuring@#1\else\omit\hfill$\displaystyle#1$\fi\ignorespaces}
\newcommand{\pushleft}[1]{\ifmeasuring@#1\else\omit$\displaystyle#1$\hfill\fi\ignorespaces}
\definecolor{green}{rgb}{0.0, 0.5, 0.5}
\newcommand{\zz}[1]{\color{blue}  { #1} \color{black}  }
\begin{document}

\author{Pierre Germain}
\address{Courant Institute of Mathematical Sciences, 251 Mercer Street, New York 10012-1185 NY, USA}
\email{pgermain@cims.nyu.edu}

\author{Fabio Pusateri}
\address{Fabio Pusateri,  Department of Mathematics, University of Toronto, 40 St. George street, Toronto, 
  M5S 2E4, Ontario, Canada}
\email{fabiop@math.toronto.edu}

\author{Katherine Zhiyuan Zhang}
\address{Courant Institute of Mathematical Sciences, 251 Mercer Street, New York 10012-1185 NY, USA}
\email{zz3463@nyu.edu}

\title{On $1$d quadratic Klein-Gordon equations \\ with a potential 
and symmetries} 

\subjclass[2000]{Primary 35Q55 ; 35B34}


\keywords{Klein-Gordon Equation, Distorted Fourier Transform, Scattering Theory.
}

\begin{abstract} 
This paper is a continuation of the previous work \cite{KGV1d} by the first two authors.
We focus on $1$ dimensional quadratic Klein-Gordon equations with a potential, 
under some assumptions that are less general than \cite{KGV1d}, but
allow us to present some simplifications 
in the proof of 
global existence with decay 
for small solutions.
In particular, we can propagate a stronger control on a basic $L^2$-weighted type norm
while providing some shorter and less technical proofs for some of the arguments. 
\end{abstract}

\maketitle

\markboth{P. GERMAIN, F. PUSATERI, Z. ZHANG}{ON $1$D QUADRATIC KLEIN-GORDON WITH A POTENTIAL}

\tableofcontents

\section{Introduction}

\subsection{Assumptions and main theorem}
We consider the equation
\begin{equation}
\label{maineq}
(\partial_t^2 - \partial_x^2 + V  + 1) u = a(x) u^2 + b(x) u^3
\end{equation}
under the following assumptions: the potential $V$ is even
(this assumption can be dropped in the case of generic $V$)
and rapidly decaying together with its derivatives\footnote{Smoothness and decay assumptions
are stated like this for convenience, but a finite amount of regularity and algebraic decay is sufficient.}:
$$
|\partial_x^\alpha V(x) | \lesssim_{\alpha,N} \langle x \rangle^{-N}, \qquad \mbox{for all $\alpha, N$},
$$
and the functions $a$ and $b$ have limits at $\pm \infty$, which they reach rapidly:
\begin{equation}\label{ab}
|\partial_x^\alpha (a(x) - a_{\pm \infty}) | \lesssim_{\alpha,N} \langle x \rangle^{-N}, 
\qquad |\partial_x^\alpha (b(x) - b_{\pm \infty}) | \lesssim_{\alpha,N} \langle x \rangle^{-N}, \qquad \mbox{for $\pm x >0$}.
\end{equation}
We assume that $b$ is even, and will specify additional parity assumptions on $a$ below.\footnote{These 
parity assumptions can be omitted for generic potentials.}



Furthermore, the Schr\"odinger operator
$$
H = - \partial_x^2 + V
$$
is assumed to have no eigenvalues, and to satisfy the assumption below.

\begin{assumption}  \label{A:op}
One of the three conditions below is satisfied:
\begin{itemize}
\item The potential $V$ is generic.

\item The potential $V$ is exceptional, with an even zero energy resonance, the data and $a(x)$ are odd.

\item The potential $V$ is exceptional, with an odd zero energy resonance, the data and $a(x)$ are even.
\end{itemize}
\end{assumption}
The readers can refer to Definition \ref{defV} and \eqref{f+-}--\eqref{m+-} for the definition of generic and exceptional (or non-generic) potentials.

Finally, we provide the equation with data at time zero:
$$u(t=0) = u_0, \quad \partial_t u(t=0) = u_1.$$

Our main result is then the following:

\begin{thm} \label{mainthm}
Under the above assumptions, there exists $\varepsilon_0>0$ such that, if $(u_0,u_1)$ satisfy
$$
\| (\sqrt{H+1} u_0,u_1) \|_{H^4} + \| \langle x \rangle (\sqrt{H+1} u_0,u_1) \|_{H^1} = \varepsilon < \varepsilon_0,
$$
then there exists a unique global solution of \eqref{maineq}
which decays pointwise and is globally bounded in $L^2$ type spaces: for any time $t$,
\begin{align*}
& \| (\sqrt{H+1} u(t), \partial_t u(t)) \|_{L^\infty} \lesssim \varepsilon \langle t \rangle^{-1/2} 
\\
& \| u(t) \|_{H^5} + \| \partial_t u (t) \|_{H^4} \lesssim \varepsilon \langle t \rangle^{p_0},
\end{align*}
where $p_0$ is a small\footnote{
$p_0$ can be chosen of the form $C\varepsilon^2$ for some absolute constant $C>0$,
this constraint arising from the Sobolev energy estimate.}
number.
\end{thm}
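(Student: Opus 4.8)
First, I would recast \eqref{maineq} as a first-order equation adapted to $H=-\partial_x^2+V$. Set $\langle\nabla\rangle_V:=\sqrt{H+1}$ and $w:=\partial_t u-i\langle\nabla\rangle_V u$; since $u$ is real one recovers $u=\tfrac{i}{2}\langle\nabla\rangle_V^{-1}(w-\bar w)$ and $\partial_t u=\tfrac12(w+\bar w)$ through bounded operators, and \eqref{maineq} becomes $(\partial_t+i\langle\nabla\rangle_V)w=\mathcal{N}$ with $\mathcal{N}:=a(x)u^2+b(x)u^3$. Because $H\ge 0$ has no eigenvalue, the distorted Fourier transform $\wtF$ is an isomorphism on the relevant weighted Sobolev spaces and conjugates $H$ to multiplication by $\xi^2$; I introduce the profile $\wtF f(t,\xi):=e^{it\langle\xi\rangle}\wtF w(t,\xi)$ with $\langle\xi\rangle:=\sqrt{1+\xi^2}$, so that Duhamel reads $\wtF f(t,\xi)=\wtF f(0,\xi)+\int_0^t e^{is\langle\xi\rangle}\,\wtF{\mathcal{N}}(s,\xi)\,ds$. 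The bootstrap is run on three quantities: the high Sobolev norm $\|f\|_{H^4}=\|w\|_{H^4}$, which controls $\|u\|_{H^5}+\|\partial_t u\|_{H^4}$ and which I allow to grow like $\varepsilon\langle t\rangle^{p_0}$; the weighted norm $\|\langle\xi\rangle\,\partial_\xi\wtF f\|_{L^2_\xi}$, for which I aim at the improved bound announced in the abstract; and a weighted $L^\infty$ norm of $\wtF f$, bounded by $\lesssim\varepsilon$. These feed the linear dispersive estimate $\|\langle\nabla\rangle_V u(t)\|_{L^\infty}+\|\partial_t u(t)\|_{L^\infty}\lesssim\langle t\rangle^{-1/2}\big(\|\wtF f\|_{L^\infty_\xi}+\|\langle\xi\rangle\partial_\xi\wtF f\|_{L^2_\xi}\big)$, which yields the claimed pointwise decay; standard local well-posedness plus a continuity argument then upgrades the a priori bounds to a global solution.

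The heart of the argument is the quadratic term. Expanding $a u^2$ in the distorted transform produces, for each pair of signs $\epsilon_1,\epsilon_2\in\{+,-\}$, a bilinear piece $\int_{\R^2}e^{is\Phi_{\epsilon_1\epsilon_2}(\xi,\eta,\sigma)}\,\mathfrak{a}_{\epsilon_1\epsilon_2}(\xi,\eta,\sigma)\,\wtF f_{\epsilon_1}(s,\eta)\,\wtF f_{\epsilon_2}(s,\sigma)\,d\eta\,d\sigma$, where $\Phi_{\epsilon_1\epsilon_2}=\langle\xi\rangle-\epsilon_1\langle\eta\rangle-\epsilon_2\langle\sigma\rangle$, $f_+=f$, $f_-=\bar f$ (up to a reflection in the frequency variable), and the nonlinear spectral distribution $\mathfrak{a}$ is assembled from $a(x)$ and the generalized eigenfunctions of $H$. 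The essential structural input, available from the inverse-scattering description of $\wtF$ as in \cite{KGV1d}, is a splitting $\mathfrak{a}=\mathfrak{a}^{\mathrm{sing}}+\mathfrak{a}^{\mathrm{reg}}$: the singular part consists of terms $a_{\pm\infty}\,\delta(\xi\mp\eta\mp\sigma)$ and principal-value kernels, coming from the $e^{\pm ix\xi}$ asymptotics of the Jost solutions and the transmission/reflection coefficients, while $\mathfrak{a}^{\mathrm{reg}}$ is a genuine symbol with Schwartz-type decay and derivative bounds. The regular part is estimated by direct integration by parts in $\eta,\sigma$ (the phase derivatives are favorable away from $\eta=0$ or $\sigma=0$, where the measure is small and the symbol smooth), closing with the $L^\infty$ and weighted-$L^2$ control. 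For the singular part the contribution collapses, up to the constants $a_{\pm\infty}$, to the constant-coefficient quadratic Klein–Gordon nonlinearity; here one uses that none of the phases $\Phi_{\epsilon_1\epsilon_2}$ vanishes on the relevant frequency set — the $(-,-)$ phase is $\ge 3$ everywhere, while the $(+,+)$ and $(+,-)=(-,+)$ phases are each of one sign, degenerating only like $\langle\xi\rangle^{-1}$ at large frequencies of the same sign and bounded below by $1$ in absolute value near the origin — so a normal form, integration by parts in $s$ against $1/\Phi_{\epsilon_1\epsilon_2}$, turns the quadratic input into cubic terms, which carry an extra $\langle s\rangle^{-1/2+}$ and are time-integrable, plus quadratic boundary terms that are bounded directly; the mild derivative loss from $1/\Phi$ is absorbed using the high-norm room and the gain in frequency.

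The subtle point, and the one I expect to be the main obstacle, is the behavior of everything at the threshold $\xi,\eta,\sigma\to 0$. There the coefficients of $\mathfrak{a}^{\mathrm{sing}}$ and the small-frequency structure of the distorted transform are dictated by the zero-energy behavior of $H$: in the generic case the relevant quantities are regular and the threshold interaction is harmless, whereas in the two exceptional cases a zero-energy resonance — which, $V$ being even, is itself either an even or an odd function — produces a genuine threshold singularity in the normal-form'd quadratic term localized near $\xi=0$. This is exactly where the parity hypotheses enter: when the data and $a$ have the parity prescribed relative to the parity of the resonance, $\wtF f$ inherits the corresponding parity and vanishes at $\xi=0$ with a rate that compensates the singular coefficient (and in the generic case these hypotheses are unnecessary). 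Concretely I would localize to small frequencies, extract the precise leading singular profile of $\mathfrak{a}^{\mathrm{sing}}$ near the origin from the zero-energy expansion of the Jost functions, and verify that the parity-forced vanishing of $\wtF f$ at $0$ makes the corresponding integrals convergent; carrying this bookkeeping consistently — matching the parities of the resonance, the data, and of $a$ and $b$ with the threshold asymptotics of the eigenfunctions — is the most delicate step, and the place where the present streamlined choice of hypotheses pays off.

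Finally, the cubic term $b(x)u^3$ is, as in the flat case, weakly long range: its distorted-Fourier expansion contains a resonant self-interaction piece of the shape $c(\xi)\,|\wtF f(t,\xi)|^2\wtF f(t,\xi)$ with $c$ real and a time integral that diverges only logarithmically. Because $c$ is real this contribution merely rotates the phase of $\wtF f(t,\xi)$ and therefore does not affect the $L^\infty$ bound on $\wtF f$ nor the $\langle t\rangle^{-1/2}$ decay of $u$; for the weighted-$L^2$ quantity one either absorbs the resulting logarithm into the $\langle t\rangle^{p_0}$ budget or, following the usual modified-scattering normalization, factors out an explicit phase $e^{i\theta(t,\xi)}$ with $\theta=O(\log t)$ and propagates the sharper bound on the corrected profile. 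All remaining non-resonant cubic interactions, together with the $\langle\nabla\rangle_V$-weighted Sobolev version of the nonlinearity, are handled by the same normal-form / integration-by-parts scheme, contributing at worst to the $\langle t\rangle^{p_0}$ growth of the high norm, with $p_0$ of size $O(\varepsilon^2)$ arising from the barely-non-integrable $\langle s\rangle^{-1}$ factor in the energy estimate. Collecting the quadratic and cubic bounds closes the bootstrap for $\varepsilon_0$ small and yields the stated global solution.
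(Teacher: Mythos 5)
Your overall architecture matches the paper's: first-order reformulation via $v=(\partial_t-iB)u$, profile on the distorted Fourier side, a singular/regular splitting of the quadratic spectral distribution, a normal form on the (non-resonant) singular quadratic part producing cubic terms, a three-norm bootstrap (Sobolev with $\jt^{p_0}$ growth, weighted $L^2$ of $\partial_\xi\wt f$, Fourier $L^\infty$), and modified scattering from the resonant cubic self-interaction. Minor imprecisions aside (the cubic terms produced by the normal form are \emph{not} time-integrable in the weighted norm --- they decay only like $s^{-1}$, which is exactly why that norm is allowed to grow like $\jt^\alpha$), the plan is the right one up to a single point.

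That point is a genuine gap: you locate the dangerous threshold interaction ``near $\xi=0$'' and propose to neutralize it by the vanishing of $\wt f$ at $\xi=0$ forced by parity. But the resonance in question is $(0,0)\to\pm\sqrt3$: the phase $\langle\xi\rangle-\langle\eta\rangle-\langle\sigma\rangle$ vanishes at $\eta=\sigma=0$, $|\xi|=\sqrt3$, so the singularity it creates sits at the \emph{output} frequencies $\pm\sqrt3$, not at $0$. Moreover, $\wt f(0)=0$ alone is precisely the hypothesis of \cite{KGV1d} and is known to be insufficient to remove it --- it is compatible with a persistent singularity of $\wt f$ at $\pm\sqrt3$ and a logarithmic loss in pointwise decay (cf. \cite{LLS20,LLSS}), which is what forces the heavy functional framework there. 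What actually makes the present simplified argument close is the stronger cancellation available under the stated parity assumptions: the generalized eigenfunctions themselves vanish at zero energy ($T(0)=\pm1$, $R(0)=0$, hence $K_o(x,0)=K_e(x,0)=0$ in \eqref{TRKodd}, \eqref{TRKeve}), so the quadratic spectral distribution vanishes whenever \emph{any one} of the three frequencies $\xi,\eta,\sigma$ is zero --- its regular part carries a full factor $\xi\eta\sigma/(\jxi\jeta\jsig)$ as in \eqref{estmu1}, and the coefficients $a_\lambda(0)=\lambda$ in the singular part sum to zero as in \eqref{muSat0}. It is this vanishing of the \emph{measure} (not merely of the profile) at zero input frequency that kills the $(0,0)\to\pm\sqrt3$ interaction and permits boundary-term-free integration by parts in $\eta$ and $\sigma$ with no singularity at $\pm\sqrt3$. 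Your sketch gestures at ``the threshold asymptotics of the eigenfunctions'' entering the coefficients, but as written the compensation is attributed to the wrong object, and following that route would lead you back into the full difficulties of \cite{KGV1d} rather than around them.
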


In particular, our result gives a simpler proof of stability for families of kinks of the double Sine-Gordon equation under odd
perturbations, see the discussion in Section 1.4.3 in \cite{KGV1d}.
The proof also gives a more precise description of the asymptotic behavior of $u$, 
which undergoes a type of modified scattering. To be more specific, the distorted Fourier transform 
(defined in Section \ref{STIDO}) of a suitably renormalized profile $f$ 
(defined in Section \ref{MND}, see \eqref{profile1}) satisfies the following: 
there exists an asymptotic profile $W^{\infty} = (W^{\infty}_+, W^{\infty}_-) 
\in \big(\jxi^{-3/2}L^\infty_\xi\big)^2$ such that,
for $\xi>0$,
\begin{align}\label{asy}
\begin{split}
&  \big(\wt{f}(t,\xi),\wt{f}(t,-\xi)\big)
	\\ & = S^{-1}(\xi) \exp\Big( -\frac{5i}{12}  \mathrm{diag}\big( \ell_{+\infty}^2\big| W^{\infty}_+(\xi) \big|^2, 
	\ell_{-\infty}^2\big| W^{\infty}_-(\xi) \big|^2 \big) \log t  \Big) W^{\infty}(\xi)
	+ O\big(\e_0^2 \jt^{-\delta_0}\big)
\end{split}
\end{align}
as $t \rightarrow \infty$, for some $\delta_0>0$,
where $S(\xi)$ is the scattering matrix (see Section \ref{STIDO} for its definition) 
and $\ell_{\pm \infty}$ can be determined from $a_{\pm\infty}$ and $b_{\pm\infty}$
(we have $ \ell_{\pm \infty} = a_{\pm \infty}$ when $b_{\pm\infty} = 0$).


\smallskip
\subsection{Relation to \cite{KGV1d}}
Denoting $\widetilde{f}$ for the distorted Fourier transform associated to $H$ of a function $f$, 
the assumptions on $H$ made in the previous subsection ensure that
$\widetilde{u}(0) = 0$ for $u$ solution of \eqref{maineq}; 
this was the key hypothesis in the main theorem of \cite{KGV1d} 
on global solutions and asymptotics for \eqref{maineq}. 
However, the assumptions made above Theorem \ref{mainthm} imply more: 
namely, the generalized eigenfunctions associated to $H$ also vanish at zero frequency,
as can be seen from 
\eqref{TRKodd} and \eqref{TRKeve}. 
This additional cancellation is responsible for stronger local decay (see Section \ref{DE}), 
or for the vanishing of the quadratic spectral distribution at zero frequency (see Section \ref{TQSD}). 
These effects are essentially equivalent; 
the former provides a more direct physical intuition, 
while the latter is the main tool which allows to simplify the proof.

In the more general framework of \cite{KGV1d}, these additional cancellations at zero frequency are absent, 
and this causes the formation of a singularity in frequency space:
the $L^2$ norm of $\partial_\xi \widetilde{f}(t,\xi)$ diverges rapidly as $t\to \infty$ 
close to the frequencies $\pm \sqrt{3}$, 
due to the resonant interaction $(0, 0 )\to \pm \sqrt{3}$
(i.e., when the frequency zero interacts with itself to give the frequency $\pm \sqrt{3}$).

The relevance of the weaker assumptions made in \cite{KGV1d}
comes from the fact that singularities in frequency space are a very general phenomenon for
many classes of $1$d problems. In particular, these singularities are expected to appear 
when the linear(ized) operator 
has a resonance at the bottom of the continuous spectrum 
(e.g. the linearized operators for the kinks of the $\phi^4$ and Sine-Gordon models) 
or a so-called internal mode (e.g. the linearized operator for the kink of the $\phi^4$ model);
see the discussion in Section 1 of \cite{KGV1d} for more on these aspects.


This type of singularity in frequency space requires a special functional framework,
like the one of \cite{KGV1d}, which makes the nonlinear estimates of $L^2$ weighted-type norms technical
and lengthy.
One of the goals of the present paper is to show how some of these estimates can be simplified 
in some more specific cases. 
At the same time we can also obtain better bounds by propagating a stronger, and more standard,
$L^2$ weighted-type norm. 

\smallskip
\subsection{Background} 
The equation \eqref{maineq} is motivated by the desire to understand 
the stability properties of topological solitons (kinks) in equations of the type
\begin{align}\label{wave}
\partial_t^2 \phi - \partial_x^2 \phi = W''(\phi),
\end{align}
where $W$ is a double-well potential.
A famous example is the kink $\tanh(x/\sqrt 2)$ in the $\phi^4$ model
\begin{align}\label{phi4}
\partial_t^2 \phi - \partial_x^2 \phi = \phi - \phi^3.
\end{align}
Local asymptotic stability for \eqref{phi4} and many interesting models of the form \eqref{wave}
was established recently by Kowalczyk, Martel and Mu\~noz \cite{KMM1}, 
see also \cite{KMM2,KMMVDB}, but some important questions remain to be understood, such as:
what is the global-in-space behavior of solutions? what is the rate of decay
of perturbations of solitons on long-time intervals? 
Can one establish global asymptotic stability?

The work of Delort and Masmoudi \cite{DM} gives a description of odd perturbations of the kink 
of \eqref{phi4} globally in space,
up to times of the order $\epsilon^{-4}$, where $\epsilon$ is the size of the perturbation. 
This limitation is due to a singularity at a frequency related to that of the internal mode.
Lindblad, Soffer and L\"uhrmann \cite{LLS20}, later joined by Schlag \cite{LLSS}, 
showed that, for non-generic $V$, solutions of \eqref{maineq} with $b=a_{\pm\infty}=0$
exhibit a logarithmic loss in the pointwise decay, 
because of the aforementioned singularity at the frequencies $\pm\sqrt{3}$.

We also mention \cite{GPR,ChPu}, 
which were earlier attempts to understand nonlinear resonances in the framework of the distorted Fourier transform,
and the recent work of Luhrman and Schlag \cite{LSSG} on the stability of the Sine-Gordon kink under odd perturbations,
and references therein.
All the articles mentioned above only correspond to very recent developments, 
and we refer to the introduction of \cite{KGV1d} and \cite{LSSG} for a more complete overview.

\smallskip
\subsection*{Notation}
Most of the necessary notation will be introduced in due course.
In Section \ref{sectionregular} and \ref{secred} we are going to use the following standard notation for cutoffs:
we fix a smooth even cutoff function $\varphi : \mathbb{R} \rightarrow [0, 1]$ 
supported in $[-8/5, 8/5]$ and equal to $1$ on $[-5/4, 5/4]$. For $k \in \mathbb{Z}$ we define $\varphi_k (\xi) := \varphi (2^{-k} \xi) -  \varphi (2^{-k+1} \xi) $, so that the family $(\varphi_k)_{k \in \mathbb{Z}}$ forms a partition of unity, $\sum_{k \in \mathbb{Z}} \varphi_k (\xi) =1$ for $\xi \neq 0$. Let 
\begin{equation}\label{LPnot}
\varphi_{I} (\xi) := \sum_{k \in I \cap \mathbb{Z}} \varphi_k (\xi), \quad
\varphi_{\leq a} (\xi) := \varphi_{(-\infty, a]} (\xi) , \quad
\varphi_{> a} (\xi) := \varphi_{(a, \infty)} (\xi) ,
\end{equation}
with similar definitions for $\varphi_{\geq a}  $, $\varphi_{ < a}$.
We will also denote $\varphi_{\sim k}$ a generic smooth cutoff function that 
is supported around $|\xi| \sim 2^k$, e.g. $\varphi_{[k-2, k+2]}$ or $\varphi_k'$.

We will denote by $T$ a positive time, and always work on an interval $[0,T]$
for our bootstrap estimates; see Proposition \ref{propbootstrap}.
To decompose the time integrals, such as those appearing in \eqref{bootQR}, 
for any $t \in [0,T]$, we will use a suitable decomposition of the indicator function $\mathbf{1}_{[0,t]}$
by fixing functions $\tau_0,\tau_1,\cdots, \tau_{L+1}: \R \to [0,1]$, 
for an integer $L$ with $|L-\log_2 (t+2)| < 2$,
with the properties that 
\begin{align}\label{timedecomp}
\begin{split}
& \sum_{n=0}^{L+1}\tau_n(s) = \mathbf{1}_{[0,t]},  
\qquad \supp (\tau_0) \subset [0,2], \quad  \supp (\tau_{L+1}) \subset [t/4,t],
\\
& \mbox{and} \quad \supp(\tau_n) \subseteq [2^{n-1},2^{n+1}], 
  \quad |\tau_n'(t)|\lesssim 2^{-n}, \quad \mbox{for} \quad n= 1,\dots, L.
\end{split}
\end{align}

\smallskip
\subsection{Organization of the article}

The emphasis in the present article is on the parts of the argument which can be simplified compared to \cite{KGV1d}; 
for estimates which are unchanged or that can be immediately adapted in the present setting,
we simply refer to the corresponding statements in \cite{KGV1d}.

A quick overview of the spectral theory of one-dimensional Schr\"odinger operators, 
and their associated distorted Fourier transform, is given in Section~\ref{STIDO}, 
followed by a discussion of estimates for the linear Klein-Gordon equation in Section~\ref{DE}. 
Structure theorems for the quadratic spectral distribution are stated in Section~\ref{TQSD}. 
For these three sections, the main novelty occurs in the case of even potentials with odd or even resonances, 
where our formulation is new, and allows to see more clearly the cancellation at frequency zero.

Section \ref{MND} recapitulates the normal form transformation, and the ensuing decomposition 
of nonlinear terms, referring to \cite{KGV1d} for further details. 
From there, the bootstrap argument is laid out in Section \ref{BABAPB}: 
Proposition \ref{propbootstrap} is the heart of the proof of Theorem \ref{mainthm}, 
and its proof occupies the remaining sections of the paper.

In the proof of Proposition \ref{propbootstrap}, the main difficulty is to estimate
the $L^2$-norm of $\partial_\xi \widetilde{f}$, and this is where the main simplifications occur.
The norm that we can propagate is stronger than the one in \cite{KGV1d},
and the proofs are shorter and simpler than those in Sections 8 and 9 of \cite{KGV1d}.
The (regular) quadratic term is treated in Section \ref{sectionregular}, 
and the (singular) cubic term is treated in Section \ref{TMSI}. 
In the final Section \ref{secred} we discuss how to estimate remainder terms 
and the other norms appearing in the bootstrap.

\smallskip
\subsection*{Acknowledgements}
While working on this project, PG was supported by the NSF grant DMS-1501019, 
by the Simons collaborative grant on weak turbulence, by the Center for Stability, Instability and Turbulence (NYUAD), 
and by the Erwin Schr\"odinger Institute.

FP was supported in part by a start-up grant from the University of Toronto, 
NSERC Grant No. 06487, and a Connaught Foundation New Researcher Award.

ZZ was supported by the Simons collaborative grant on weak turbulence and by an AMS-Simons travel grant.


\smallskip
\section{Spectral theory in dimension one}
This section starts with a quick review of the scattering 
and spectral theory of one-dimensional Schr\"odinger operators; 
the reader is referred to \cite{KGV1d} (see also references therein) for a more thorough presentation. 
Some helpful formulas in the case of even or odd function are then established.

\label{STIDO}
\subsection{Linear scattering theory}
Define $f_{+}(x,\xi)$ and $f_-(x,\xi)$ by the requirements that
\begin{align}
\label{f+-}
Hf_\pm := (- \partial_x^2 + V) f_{\pm}  = \xi^2 f_{\pm}, \quad \mbox{for all $x\in\R$, \quad and} \quad
\left\{
\begin{array}{ll}
f_{+}(x,\xi) \sim e^{ix\xi} & \mbox{as $x \to \infty$}
\\
f_{-}(x,\xi) \sim e^{-ix\xi} & \mbox{as $x \to - \infty$}.
\end{array}
\right.
\end{align}
Define further
\begin{align}
\label{m+-}
m_{+}(x,\xi) = e^{-i\xi x} f_{+}(x,\xi) \quad \mbox{and} \quad m_{-}(x,\xi) = e^{i\xi x} f_{-}(x,\xi),
\end{align}
so that $m_{\pm}$ is a solution of 
\begin{equation}
\label{equationm}
\partial_x^2 m_{\pm} \pm 2i\xi \partial_x m_{\pm} = Vm_{\pm}, \qquad m_\pm(x,\xi) \to 1 \;\mbox{as $x \to \pm \infty$}.
\end{equation}
These satisfy the estimates
\begin{equation}
\label{estimatesm}
\begin{array}{ll}
\left| \partial_x^\alpha \partial_\xi^\beta (m_{\pm}(x,\xi) - 1) \right| 
  \lesssim \langle x \rangle^{-N} \langle \xi \rangle^{-1-\beta} & \mbox{if $\pm x > -1$},
\\
 \left| \partial_x^\alpha \partial_\xi^\beta m_{\pm}(x,\xi) \right| 
 \lesssim \langle x \rangle^{1+\beta} \langle \xi \rangle^{-1-\beta} & \mbox{if $\pm x < 1$}.
\end{array}
\end{equation}
Denote $T(\xi)$ and $R_{\pm}(\xi)$ respectively the {\it transmission} 
and {\it reflection} coefficients associated to the potential $V$.
These coefficients are such that
\begin{align}
\label{f+f-}
\begin{split}
&f_+ (x,\xi) = \frac{1}{T(\xi)} f_-(x,-\xi) + \frac{R_-(\xi)}{T(\xi)} f_-(x,\xi),
\\
&f_- (x,\xi) = \frac{1}{T(\xi)} f_+(x,-\xi) + \frac{R_+(\xi)}{T(\xi)} f_+(x,\xi),
\end{split}
\end{align}
and are explicitly given by the formulas
\begin{align}
\label{TRformula}
\begin{split}
&T(\xi) = \frac{2i\xi}{2i\xi - \int V(x) m_{\pm} (x,\xi)\,dx},
\\
& R_{\pm}(\xi) = \frac{\int e^{\mp 2i\xi x} V(x) m_{\mp}(x,\xi)\,dx}{2i\xi - \int V(x) m_{\pm} (x,\xi)\,dx}.
\end{split}
\end{align}
These formulas are only valid for $\xi \neq 0$ a priori, 
but  $T$ and $R_{\pm}$ can be extended to be smooth functions on the whole real line. 
From \eqref{TRformula} and \eqref{estimatesm} we can see that
\begin{equation}
\label{estimatesTR}
\begin{split}
& |\partial_\xi^\beta(T(\xi) - 1) | \lesssim  \langle \xi \rangle^{-1-\beta},
\qquad |\partial_\xi^\beta R_{\pm}(\xi) | \lesssim  \langle \xi \rangle^{-N}.
\end{split}
\end{equation}
Finally, the scattering matrix
$$
S(\xi) = \begin{pmatrix} T(\xi) & R_+(\xi) \\ R_-(\xi) & T(\xi) \end{pmatrix}
$$
is unitary.

\smallskip
\subsection{Exceptional and generic potentials}
Here are some useful lemmas about the behavior of the transmission and reflection coefficients. Readers can refer to Deift-Zhou \cite{DZ1} for details. Recall the definition of the Wronskian $W(f,g) = f'g - fg'$.

\begin{df}\label{defV}
We call the potential $V$
\begin{itemize}
\item \emph{generic} if $\int V(x) m_\pm (x, 0) dx \neq 0$; 
\item \emph{exceptional} if $\int V(x) m_\pm (x, 0) dx = 0$. 
\end{itemize}
\end{df}

\begin{lem}\label{lemVgen}
The four following assertions are equivalent

\begin{itemize}
\item[(i)] 
$V$ is generic.

\smallskip
\item[(ii)] $\displaystyle T(0) = 0, R_{\pm}(0) = -1$.

\smallskip
\item[(iii)] The Wronskian of $f_+$ and $f_-$ at $\xi=0$ is non-zero.

\smallskip
\item[(iv)] The potential $V$ does not have a resonance at $\xi=0$, 
in other words there does not exist a bounded non trivial solution in the kernel of $-\partial_x^2 + V$.

\end{itemize}

\end{lem}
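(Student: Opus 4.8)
The plan is to establish the chain of equivalences (i) $\Leftrightarrow$ (ii) $\Leftrightarrow$ (iii) $\Leftrightarrow$ (iv) by working directly with the Jost solutions $f_\pm(x,\xi)$ and the representation formulas \eqref{TRformula}. The key object is the behavior of these solutions at $\xi = 0$, where the equation $Hf = 0$ is the ODE $-f'' + Vf = 0$; since $V$ decays rapidly, any solution is asymptotically affine at $\pm\infty$ (a linear combination of $1$ and $x$), so a bounded nontrivial solution in the kernel of $H$ — a resonance — exists precisely when there is a solution that stays bounded at both ends, i.e.\ tends to constants at $\pm\infty$. I would take (iv) as the definition of ``$V$ generic'' being false (or recall whichever definition the paper uses) and show the other three conditions detect exactly this.

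First I would handle (iii) $\Leftrightarrow$ (iv). At $\xi=0$ the functions $f_+(x,0)$ and $f_-(x,0)$ are solutions of $-f'' + Vf = 0$ normalized to tend to $1$ at $+\infty$ and $-\infty$ respectively (using \eqref{estimatesm}, $m_\pm(x,0)\to 1$). Their Wronskian $W(f_+,f_-)$ is $x$-independent. If $W(f_+,f_-)(0)=0$ then $f_+(\cdot,0)$ and $f_-(\cdot,0)$ are proportional, hence there is a single solution bounded (in fact convergent to constants) at both $\pm\infty$: this is a zero-energy resonance, so (iv) fails. Conversely, if a resonance $\psi$ exists, then $\psi$ is bounded at $+\infty$, and the two-dimensional solution space at $\xi=0$ is spanned by $f_+(\cdot,0)$ and the unbounded ($\sim x$ at $+\infty$) solution; boundedness forces $\psi \propto f_+(\cdot,0)$, and the same argument at $-\infty$ gives $\psi\propto f_-(\cdot,0)$, so $f_+(\cdot,0)$ and $f_-(\cdot,0)$ are proportional and their Wronskian vanishes. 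Next, (ii) $\Leftrightarrow$ (iii): using the standard identity $T(\xi) = 2i\xi / W(f_-(\cdot,\xi), f_+(\cdot,\xi))$ together with the limiting behavior of the Wronskian as $\xi\to 0$, one sees that $W(f_+,f_-)(0)\neq 0$ forces $T(\xi)\to 0$ like $\xi$ (so $T(0)=0$ in the smooth extension), while $W(f_+,f_-)(0)\neq 0$ combined with the connection formulas \eqref{f+f-} at $\xi=0$ pins down $R_\pm(0)=-1$; conversely $T(0)=0$ with $R_\pm(0)=-1$ is incompatible with $W(f_+,f_-)(0)=0$. Finally (i) $\Leftrightarrow$ the rest is just the definition, routing through (iv).

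Concretely I would organize the proof as (iv) $\Rightarrow$ (iii) $\Rightarrow$ (ii) $\Rightarrow$ (i) $\Rightarrow$ (iv), or whichever cyclic order is cleanest given how ``generic'' was defined, and cite \eqref{estimatesm}, \eqref{f+f-}, \eqref{TRformula} for the quantitative inputs. The main obstacle is the careful bookkeeping at $\xi = 0$: the formulas \eqref{TRformula} are only valid a priori for $\xi\neq 0$, so one has to justify the smooth extension and track the precise rate at which $W(f_-(\cdot,\xi),f_+(\cdot,\xi))$ degenerates (or does not) as $\xi\to 0$ — this is where genericity versus exceptionality is actually encoded. The rest is standard ODE theory (uniqueness of the bounded solution at each end, constancy of the Wronskian, and Abel's identity). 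I would lean on the estimates \eqref{estimatesm} to ensure $m_\pm(x,\xi)-1$ and its $\xi$-derivatives are controlled uniformly down to $\xi=0$, which makes the limit $\xi\to 0$ in all these formulas legitimate.
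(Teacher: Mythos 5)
The paper does not actually prove Lemma \ref{lemVgen}; it is recalled as standard background, with the reader referred to \cite{KGV1d} and the references therein. Your outline is the standard (and correct) argument: (iii)$\Leftrightarrow$(iv) by elementary ODE analysis of the zero-energy Jost solutions (asymptotically affine behavior, constancy of the Wronskian, proportionality iff the Wronskian vanishes), (ii)$\Leftrightarrow$(iii) via the Wronskian representation of $T$, and (i)$\Leftrightarrow$(iv) by definition. The one technical point you flag as the main obstacle --- tracking the rate at which $W(f_+(\cdot,\xi),f_-(\cdot,\xi))$ degenerates as $\xi\to 0$ --- can be short-circuited by invoking Proposition \ref{LES}, which the paper states independently: in the exceptional case \eqref{LESTR0} gives $T(0)=2a/(1+a^2)\neq 0$ and $R_\pm(0)=\pm(1-a^2)/(1+a^2)\neq -1$ for real $a\neq 0$, so (ii) fails, while in the generic case \eqref{LEST} gives $T(0)=0$ and the connection formulas \eqref{f+f-} at $\xi=0$ then force $R_\pm(0)=-1$, exactly as you indicate.
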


\begin{prop}[Low energy scattering]\label{LES}
If $V$ is generic, there exists $\alpha \in i \mathbb{R}$ such that
\begin{align}
\label{LEST}
T(\xi) = \alpha \xi + O(\xi^2).
\end{align}
If $V$ is exceptional, let
$$
a := f_+(-\infty,0) \in \mathbb{R} \setminus \{ 0 \}.
$$
Then, 
\begin{align}\label{LESTR0}
T(0) = \frac{2a}{1+a^2}, \qquad R_+(0) = \frac{1 - a^2}{1+ a^2}, \qquad \mbox{and} \qquad R_-(0) = \frac{a^2-1}{1+ a^2}.
\end{align}
\end{prop}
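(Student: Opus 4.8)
The plan is to compute the low-energy asymptotics of $T$ and $R_{\pm}$ directly from the explicit formulas \eqref{TRformula} together with the value $m_{\pm}(x,0)$ at zero energy. First I would record what happens to the Jost solutions at $\xi = 0$. Since $m_{\pm}(\cdot,0)$ solves $\partial_x^2 m_{\pm} = V m_{\pm}$ with $m_{\pm}(x,0) \to 1$ as $x \to \pm\infty$, the function $f_{\pm}(x,0) = m_{\pm}(x,0)$ is a bounded solution of $-\partial_x'' \psi + V\psi = 0$ normalized to $1$ at $\pm\infty$. In the exceptional case this is, up to scaling, the zero-energy resonance, and $a := f_+(-\infty,0)$ is a well-defined nonzero real number (reality follows since $V$ is real and the equation has real coefficients, so $\overline{f_+(\cdot,0)}$ is another bounded solution agreeing with $f_+(\cdot,0)$ at $+\infty$, hence equal to it; nonvanishing at $-\infty$ is Lemma \ref{lemVgen}(iii)--(iv), because a resonance that vanished at $-\infty$ would force $f_+$ and $f_-$ to be proportional, making the Wronskian zero and contradicting the exceptional hypothesis — actually for exceptional $V$ the Wronskian \emph{is} zero, so I must instead argue that $f_+(\cdot,0) \not\equiv 0$ and that its limit at $-\infty$ exists and is nonzero because the solution is bounded and $V$ decays). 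I would make this precise using the fact that bounded solutions of $\psi'' = V\psi$ with $V \in L^1$ have limits at $\pm\infty$.

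Next I would evaluate \eqref{TRformula} at $\xi = 0$. The key identity is that $\int V(x) m_+(x,0)\,dx = \int V f_+(\cdot,0)\,dx = \big[ f_+'(x,0)\big]_{-\infty}^{+\infty}$; since $f_+(x,0) \to 1$ as $x\to+\infty$ its derivative tends to $0$ there, and as $x \to -\infty$, $f_+(x,0) \to a$ (a constant) so $f_+'(x,0)\to 0$ there too — hence $\int V m_+(\cdot,0) = 0$, which would make \eqref{TRformula} read $0/0$. This is the signal that one must expand to next order in $\xi$. The clean way, which I would adopt, is to \emph{not} use \eqref{TRformula} naively but instead use the connection formula \eqref{f+f-} evaluated near $\xi = 0$: writing $f_{\pm}(x,0) = m_{\pm}(x,0)$ and using that $f_+(\cdot,0)$ and $f_-(\cdot,0)$ are both solutions of the same second-order ODE, I can express one in terms of the other. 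Concretely, since $f_-(x,0) \to 1$ as $x\to-\infty$ and $f_+(x,0)\to a$ as $x\to-\infty$, and the ODE has no other bounded solution (that is exactly the exceptional/resonance condition), we get $f_+(\cdot,0) = a\, f_-(\cdot,0)$ identically. Feeding $\xi\to 0$ into the first line of \eqref{f+f-} and matching the behavior at $\pm\infty$ then yields two scalar equations: comparing limits at $-\infty$ gives $a = \tfrac{1}{T(0)} + \tfrac{R_-(0)}{T(0)}$, and comparing limits at $+\infty$ (using $f_\pm(x,\xi)\sim e^{\pm i x\xi}$, which at $\xi=0$ all become $1$, but one needs the $\xi$-derivative to separate $f_-(x,-\xi)$ from $f_-(x,\xi)$) gives the complementary relation. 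Combined with unitarity of $S$ at $\xi=0$, i.e. $|T(0)|^2 + |R_\pm(0)|^2 = 1$ and $T(0)\overline{R_-(0)} + R_+(0)\overline{T(0)} = 0$, together with the reality of $T(0), R_\pm(0)$ (which follows from reality of $V$ and the Schwarz-reflection symmetry $T(-\xi) = \overline{T(\xi)}$ etc. at $\xi=0$), I can solve the resulting small algebraic system to obtain $T(0) = \tfrac{2a}{1+a^2}$, $R_+(0) = \tfrac{1-a^2}{1+a^2}$, $R_-(0) = \tfrac{a^2-1}{1+a^2}$.

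For the generic case, $T(0) = 0$ by Lemma \ref{lemVgen}(ii), and I need the first-order term. I would Taylor-expand the denominator in \eqref{TRformula}: $2i\xi - \int V m_+(x,\xi)\,dx$. At $\xi = 0$ this equals $-\int V m_+(x,0)\,dx$, which is \emph{nonzero} precisely in the generic case (this is the content of Lemma \ref{lemVgen}(iii), the non-vanishing Wronskian, which one computes equals $-\int V f_+(\cdot,0) = $ the constant $\lim_{x\to-\infty} f_+'(x,0)$, nonzero since $f_+(\cdot,0)$ cannot be a resonance). Hence $T(\xi) = \tfrac{2i\xi}{c_0 + O(\xi)} = \tfrac{2i}{c_0}\xi + O(\xi^2)$ with $c_0 = -\int V m_+(\cdot,0) \in \mathbb{R}\setminus\{0\}$ (real by reality of $V$ and of $f_+(\cdot,0)$), so $\alpha = 2i/c_0 \in i\mathbb{R}\setminus\{0\}$, using also the regularity estimate \eqref{estimatesm} which guarantees $\xi \mapsto \int V m_+(\cdot,\xi)$ is smooth so the $O(\xi^2)$ error is legitimate.

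The main obstacle I anticipate is the bookkeeping at $\xi = 0$ in the exceptional case: the formulas \eqref{TRformula} are genuinely $0/0$ there, so one cannot plug in directly, and the clean route via \eqref{f+f-} requires carefully extracting the $O(\xi)$ information hidden in the phases $e^{\pm i x\xi}$ when matching at the "wrong" end (the end where $m_\pm$ is not normalized), where the product $x\xi$ is not small. Making the limits $\lim_{x\to\mp\infty} f_\pm(x,\xi)$ rigorous for small $\xi\neq 0$ — as opposed to the formal $\xi=0$ computation — is where care is needed; I would handle it either by continuity of $T, R_\pm$ up to $\xi = 0$ (stated after \eqref{TRformula}) combined with the $\xi=0$ identity $f_+(\cdot,0) = a f_-(\cdot,0)$ and the reflection symmetry, thereby avoiding the delicate phase-matching entirely, or by a direct but slightly longer Wronskian computation $W(f_+(\cdot,\xi), f_-(\cdot,\xi)) = -2i\xi/T(\xi)$ evaluated in the limit $\xi\to 0$ using $W(f_+(\cdot,0), f_-(\cdot,0)) = 0$ and a first-order expansion.
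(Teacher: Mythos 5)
The paper does not actually prove Proposition \ref{LES}: it is quoted as a standard fact of one-dimensional scattering theory, with the reader referred to \cite{KGV1d} and the references therein. So there is no in-paper proof to compare against; judged on its own, your argument is correct and is essentially the classical one. The two linear relations $a\,T(0)=1+R_-(0)$ and $T(0)/a=1+R_+(0)$ --- obtained by letting $\xi\to0$ in the two lines of \eqref{f+f-} at \emph{fixed} $x$ (legitimate since $f_\pm$, $T$, $R_\pm$ are continuous up to $\xi=0$ and $T(0)\neq0$ in the exceptional case) and comparing with $f_+(\cdot,0)=a\,f_-(\cdot,0)$ --- combined with $T(0)^2+R_-(0)^2=1$ and reality at $\xi=0$, yield \eqref{LESTR0} uniquely once the root $T(0)=0$ is discarded via Lemma \ref{lemVgen}(ii). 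Your generic-case computation is also right: the denominator of \eqref{TRformula} at $\xi=0$ equals $-\int V f_+(\cdot,0)\,dx=\lim_{x\to-\infty}f_+'(x,0)$, which is real and nonzero exactly in the generic case, so $\alpha=2i/c_0\in i\R$.

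Two points deserve tightening. First, the nonvanishing of $a$: existence of the limit is not enough. You need that if $f_+(-\infty,0)=0$ then also $f_+'(-\infty,0)=0$ (forced by boundedness and $f_+''=Vf_+\in L^1$), after which the Volterra integral equation and Gronwall on $(-\infty,x_0]$ force $f_+(\cdot,0)\equiv 0$, contradicting $f_+(+\infty,0)=1$. Second, the reality of $T(0)$ and $R_\pm(0)$ should be pinned to the symmetry $\overline{f_\pm(x,\xi)}=f_\pm(x,-\xi)$ for real $\xi$ (ODE uniqueness), which gives $\overline{T(\xi)}=T(-\xi)$ and $\overline{R_\pm(\xi)}=R_\pm(-\xi)$. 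Both are routine, and with them your proof closes. Note also that your ``clean route'' of passing to the limit at fixed $x$ genuinely does avoid the phase-matching difficulty at the non-normalized end that you flag as the main obstacle, so that worry can simply be dropped.
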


\begin{lem}\label{estimatesTR'}
Assume that $V$ is even and exceptional, with an even zero energy resonance. Then
\begin{align}\label{estimatesTRodd}
 \jxi^{1+\beta} \Big| \partial_\xi^\beta \frac{T(\xi) - 1}{\xi} \Big|
 +  \jxi^{N} \Big| \partial_\xi^\beta \frac{R_\pm(\xi)}{\xi}\Big| \lesssim 1.
\end{align}
Similarly, if the zero energy resonance is odd we have
\begin{align}\label{estimatesTReven}
 \jxi^{1+\beta} \Big| \partial_\xi^\beta \frac{T(\xi) + 1}{\xi} \Big|
 +  \jxi^{N} \Big| \partial_\xi^\beta \frac{R_\pm(\xi)}{\xi}\Big| \lesssim 1.
\end{align}
\end{lem}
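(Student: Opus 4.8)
The plan is to derive everything from the explicit formulas \eqref{TRformula} together with the parity structure forced on $m_\pm$ by the assumption that $V$ is even. First I would record the basic symmetry: since $V$ is even, one checks from \eqref{equationm} that $m_-(x,\xi) = m_+(-x,\xi)$, and hence from \eqref{TRformula} that $R_+(\xi) = R_-(\xi)$ — so it suffices to treat $R_+$. Next, the resonance function at $\xi = 0$, namely $\psi(x) := f_+(x,0) = m_+(x,0)$, is a global bounded solution of $-\partial_x^2\psi + V\psi = 0$; evenness of $V$ means $\psi(-x)$ is also such a solution, and since the resonance space is one-dimensional, $\psi$ is either even or odd. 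The hypothesis ``even zero energy resonance'' means $\psi(-x) = \psi(x)$, which by the normalization $a = f_+(-\infty,0)$ and $m_+(+\infty,0)=1$ forces $a = 1$; the ``odd resonance'' case forces $a = -1$. Plugging $a = 1$ into \eqref{LESTR0} gives $T(0) = 1$, $R_\pm(0) = 0$; plugging $a = -1$ gives $T(0) = -1$, $R_\pm(0) = 0$. This already shows $T(\xi)\mp 1$ and $R_\pm(\xi)$ vanish at $\xi = 0$, so that the quotients in \eqref{estimatesTRodd}--\eqref{estimatesTReven} are at least finite; the content of the lemma is the quantitative $\langle\xi\rangle$-decay of all $\xi$-derivatives of these quotients.

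The main step is then a careful expansion of \eqref{TRformula} near $\xi = 0$, exploiting the $\xi$-smoothness and decay of $m_\pm$ from \eqref{estimatesm} (so that all integrals $\int V(x) m_\pm(x,\xi)\,dx$ etc.\ are smooth in $\xi$), and the extra vanishing coming from parity. Writing $D(\xi) := 2i\xi - \int V(x)m_+(x,\xi)\,dx$ for the common denominator, I would show $D(\xi) = D(0) + O(\xi)$ with $D(0) = -\int V\psi\,dx \neq 0$ in the exceptional even-$V$ case (this nonvanishing is exactly genericity failing in a controlled way; it follows from $a = \pm1$ and \eqref{LESTR0}). For the transmission coefficient, $T(\xi) = 2i\xi/D(\xi)$, so $(T(\xi)\mp 1)/\xi = (2i\xi \mp D(\xi))/(\xi D(\xi))$; one checks the numerator $2i\xi \mp D(\xi)$ vanishes to second order at $\xi = 0$ (first order is the defining relation $T(0) = \pm1$; the second order cancellation uses that $\int V(x)\partial_\xi m_+(x,0)\,dx$ combines with the $2i\xi$ term, or more robustly, one writes $N(\xi) := 2i\xi \mp D(\xi) = \pm\int V(x)\big(m_+(x,\xi) - m_+(x,0)\big)\,dx + (2i\xi \mp 2i\xi\cdot(\dots))$ and uses $\partial_\xi m_+(x,0) = 0$ when $\psi$ has the relevant parity — here one needs the parity of $m_+$ in $\xi$, which I'd get by noting $\overline{m_+(x,\xi)} = m_+(x,-\xi)$ combined with the reality of $\psi$). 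This gives a factor $\xi$ in the numerator cancelling the $\xi$ in the denominator, after which the stated $\langle\xi\rangle^{-1-\beta}$ bound on $\partial_\xi^\beta$ follows by Leibniz and \eqref{estimatesTR}-type estimates. The reflection coefficient is handled the same way: $R_+(\xi)/\xi = \xi^{-1}D(\xi)^{-1}\int e^{-2i\xi x}V(x)m_-(x,\xi)\,dx$, and one must show the numerator integral $\int e^{-2i\xi x}V(x)m_-(x,\xi)\,dx$ vanishes at $\xi = 0$; at $\xi = 0$ it equals $\int V\psi\,dx$ when $\psi$ is even — but wait, that is nonzero, so the vanishing of $R_+(0)$ must come from the interplay with $D(0)$. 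Re-examining \eqref{LESTR0}: $R_+(0) = (1-a^2)/(1+a^2) = 0$ precisely because $a^2 = 1$, i.e.\ the numerator and a compensating term conspire; so I would instead write $R_+(\xi) = T(\xi)\cdot\big(\text{something}\big)$ or work directly with $R_+(\xi)/T(\xi) = \int e^{-2i\xi x}V m_-\,dx / (2i\xi)$ and show that the even/odd parity makes $\int e^{-2i\xi x}V(x)m_-(x,\xi)\,dx = \int e^{2i\xi x}V(x)m_+(x,\xi)\,dx$, then use the relation between this and $D(\xi)$ to extract the extra vanishing. The rapid decay $\langle\xi\rangle^{-N}$ for $R_\pm/\xi$ is then inherited from \eqref{estimatesTR} since dividing by $\xi$ only costs a factor that is harmless for $|\xi|\gtrsim 1$ and is absorbed by the established vanishing for $|\xi|\lesssim 1$.

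The main obstacle I anticipate is bookkeeping the \emph{second-order} cancellation cleanly: proving that $T(\xi)\mp 1$ (and the relevant reflection quantity) vanishes to order $\xi$ rather than merely being continuous requires identifying that the first $\xi$-derivative of the appropriate integral at $\xi=0$ vanishes, and this is exactly where the evenness of $V$ and the specified parity of the resonance enter in an essential (not cosmetic) way — it is not a generic fact. Concretely, one needs $\partial_\xi m_+(x,0)$ to have a definite parity in $x$ that makes $\int V(x)\partial_\xi m_+(x,0)\,dx$ either vanish or combine exactly with the linear-in-$\xi$ term $2i\xi$; tracking this through \eqref{equationm} differentiated in $\xi$ at $\xi = 0$ is the technical heart. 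Once the order-$\xi$ vanishing of the numerators is in hand, the remaining derivative estimates are a routine application of the quotient rule together with the already-established bounds \eqref{estimatesm}, \eqref{estimatesTR}, and the lower bound $|D(\xi)|\gtrsim 1$ for $|\xi|$ small (from $D(0)\neq0$) and $|D(\xi)|\gtrsim |\xi|$ for $|\xi|$ large (from $|T(\xi)|\le 1$). The odd-resonance case \eqref{estimatesTReven} is formally identical with $a = -1$ in place of $a = 1$, replacing $T - 1$ by $T + 1$ throughout.
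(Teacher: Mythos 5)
Your first paragraph is correct and is essentially the whole of the paper's argument: evenness of $V$ plus the parity of the resonance forces $a=\pm 1$, hence by Proposition \ref{LES} $T(0)=\pm 1$ and $R_\pm(0)=0$; since $T$ and $R_\pm$ are smooth on all of $\R$ (stated after \eqref{TRformula}), the quotients $(T(\xi)\mp 1)/\xi$ and $R_\pm(\xi)/\xi$ are smooth, and the decay as $|\xi|\to\infty$ is inherited from \eqref{estimatesTR}. That is all the lemma requires: it asserts only \emph{boundedness} of these quotients and their derivatives (with weights), so first-order (Taylor) vanishing of $T\mp 1$ and of $R_\pm$ at $\xi=0$ suffices. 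The ``second-order cancellation'' you identify as the technical heart is a phantom obstacle — you would need it only if you were claiming that $(T\mp1)/\xi$ itself vanishes at $\xi=0$, which is not asserted and not generally true.

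There is also a concrete error in the machinery you build around $D(\xi)=2i\xi-\int V m_+(x,\xi)\,dx$: you assert $D(0)=-\int V\psi\,dx\neq 0$ in the exceptional case. It is exactly the opposite. From \eqref{TRformula}, if $D(0)\neq 0$ then $T(0)=0/D(0)=0$, which by Lemma \ref{lemVgen} is the \emph{generic} case; the exceptional case is characterized by $\int V(x)m_\pm(x,0)\,dx=0$, i.e.\ $D(0)=0$, so that $T(0)=2i/D'(0)\neq 0$ by l'H\^opital. This sign error propagates: your puzzlement about $R_+$ (``at $\xi=0$ it equals $\int V\psi\,dx$ \dots but wait, that is nonzero'') dissolves once you know $\int V\psi\,dx=0$ — both the numerator and denominator of $R_+$ in \eqref{TRformula} vanish at $\xi=0$, and the limit is the expression \eqref{LESTR0}. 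With $D(0)=0$ the denominator $\xi D(\xi)$ in your expression for $(T\mp1)/\xi$ vanishes to second order, so a second-order vanishing of the numerator $2i\xi\mp D(\xi)$ is indeed needed for that particular route, but it is equivalent to $D'(0)=\pm 2i$, i.e.\ to $T(0)=\pm 1$ — no further parity input about $\partial_\xi m_+(x,0)$ is required. In short: keep your first paragraph, discard the rest, and invoke the smoothness of $T,R_\pm$ together with \eqref{estimatesTR} to conclude.
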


\begin{proof}
These properties can be verified starting from the formulas \eqref{TRformula} and making use of Proposition \ref{LES}. 
From Proposition \ref{LES}, we know that 
when the zero energy resonance $f_+(x,0)$ is even we have $T(0) = 1$ since $a=1$ 
(due to the fact that $f_+(+\infty,0) = 1$ and $a = f_+(-\infty,0) $),  
so $T(\xi)-1 $ is of $O(\xi)$, and the estimate for $T(\xi)$ in \eqref{estimatesTRodd} 
follows from the smoothness of $T$ and the
decay in \eqref{estimatesm}. 
Similarly, for the case when the zero energy resonance is odd, we have $T(0) = -1$ 
since $a=-1$, and hence $T(\xi)+1$ is of $O(\xi)$ 
and \eqref{estimatesTReven} follows. 
The estimates for $R_\pm (\xi)$ can be verified in a similar way using that $R_\pm (0)= 0$ 
when $a=\pm 1$. 
\end{proof}

\medskip
\subsection{Flat and distorted Fourier transform}

The normalization we adopt for the flat Fourier transform is
$$
\widehat{\mathcal{F}} \phi (\xi) = \widehat{\phi}(\xi) = \frac{1}{\sqrt{2\pi}} \int e^{-i\xi x} \phi(x) \, dx.
$$
Its inverse is given by
$$
\widehat{\mathcal{F}}^{-1} \phi = \frac{1}{\sqrt{2\pi}} \int e^{i\xi x} \phi(\xi) \, d\xi.
$$
We now define the wave functions associated to $H$:
\begin{align}
 \label{psixk}
\psi(x,\xi) := \frac{1}{\sqrt{2\pi}}
\left\{
\begin{array}{ll}
T(\xi) f_+(x,\xi) & \mbox{for $\xi \geq 0$} \\ \\
T(-\xi) f_-(x,-\xi) & \mbox{for $\xi < 0$}.
\end{array}
\right.
\end{align}
The distorted Fourier transform is then defined by
\begin{align}
\label{distF}
\widetilde{\mathcal{F}} \phi(\xi) = \widetilde {\phi}(\xi) = \int_\mathbb{R} \overline{\psi(x,\xi)} \phi(x)\,dx.
\end{align}
It is self-adjoint and has the inverse
\begin{align}
\widetilde{\mathcal{F}}^{-1} \phi(x) = \int \psi(x,\xi) \phi(\xi) \,d\xi.
\end{align}

Let $\rho$ be an even, smooth, non-negative function, equal to $0$ outside of $B(0,2)$ and such that $\int \rho = 1$. 
Define $\chi_{\pm}$ by
\begin{equation}
\label{chi+-}
\chi_+(x) = H * \rho = \int_{-\infty}^{x} \rho(y)\,dy , \quad \mbox{and} \quad \chi_+(x) + \chi_-(x) = 1,
\end{equation}
where $H$ is the Heaviside function, $H=\mathbf{1}_{+} = \mathbf{1}_{[0,\infty)}$. Notice that
$$
\chi_+(x) = \chi_-(-x).
$$



With $\chi_\pm$ as above, and using the definition of $\psi$ in \eqref{psixk}
and $f_\pm$ and $m_\pm$ in \eqref{f+-}-\eqref{m+-}, as well as the identity \eqref{f+f-} we can write
\begin{align}
\label{psi>}
\begin{split}
\mbox{for} \quad \xi>0 \qquad \sqrt{2\pi}\psi(x,\xi) & = \chi_+(x)T(\xi)m_+(x,\xi)e^{ix\xi}
  \\ & + \chi_-(x) \big[ m_-(x,-\xi)e^{i\xi x} + R_-(\xi) m_-(x,\xi)e^{-i\xi x} \big],
\end{split}
\end{align}
and
\begin{align}
\label{psi<}
\begin{split}
\mbox{for} \quad \xi<0 \qquad \sqrt{2\pi}\psi(x,\xi) & = \chi_-(x)T(-\xi)m_-(x,-\xi)e^{ix\xi}
  \\ & + \chi_+(x) \big[m_+(x,\xi) e^{i\xi x} + R_+(-\xi)m_+(x,-\xi) e^{-i\xi x} \big].
\end{split}
\end{align}

Below we give some general properties of the distorted Fourier transform.
The proofs of these statements can be found, for example, in Proposition 3.6 of \cite{KGV1d},
and more details can be found in \cite{Yafaev}.

\begin{prop}[Mapping properties of the distorted Fourier transform]\label{propFT}
With $\wtF$ defined in \eqref{distF},

\begin{itemize}
\smallskip
\item[(i)] $\widetilde{\mathcal{F}}$ is a unitary operator from $L^2$ onto $L^2$. In particular, its inverse is
$$
\widetilde{\mathcal{F}}^{-1} \phi(x) = {\widetilde{\mathcal{F}}}^\ast \phi(x) = \int_\R \psi(x,\xi) \phi(\xi)\,d\xi.
$$

\smallskip
\item[(ii)] $\widetilde{\mathcal{F}}$ maps $L^1(\mathbb{R})$ to functions in $L^\infty(\mathbb{R})$ 
which are continuous at every point except $0$, and converge to $0$ at $\pm \infty$. 

\smallskip
\item[(iii)] $\widetilde{\mathcal{F}}$ maps the Sobolev space $H^s(\mathbb{R})$ 
onto the weighted space $L^2(\langle \xi \rangle^{2s} \,d\xi)$.

\smallskip
\item[(iv)] If $\widetilde{f}$ is continuous at zero, then, for any integer $s \geq 0$
\begin{align*}
\|\langle \xi \rangle^s \partial_\xi \widetilde{f} \|_{L^2} \lesssim \| f \|_{H^s} + \| \langle x \rangle f \|_{H^s}.
\end{align*}

\end{itemize}
\end{prop}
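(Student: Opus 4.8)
The plan is to prove Proposition \ref{propFT} part by part, with the first three items being essentially standard spectral theory and the fourth being the one that requires the extra structure.

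\textbf{Parts (i)--(iii).} For (i), the unitarity of $\widetilde{\mathcal{F}}$ is the spectral theorem for $H = -\partial_x^2 + V$: since $H$ has no eigenvalues, its spectrum is purely absolutely continuous, equal to $[0,\infty)$, and the generalized eigenfunctions $\psi(x,\xi)$ from \eqref{psixk} furnish a complete set; the Plancherel identity $\|\widetilde{\mathcal{F}}\phi\|_{L^2} = \|\phi\|_{L^2}$ and the inversion formula follow. I would simply cite this (e.g. the corresponding statement in \cite{KGV1d}) rather than reprove it. For (ii), one writes $\widetilde{\mathcal{F}}\phi(\xi) = \int \overline{\psi(x,\xi)}\phi(x)\,dx$ and uses the representations \eqref{psi>}--\eqref{psi<}: the bounds \eqref{estimatesm} on $m_\pm$ and \eqref{estimatesTR} on $T,R_\pm$ give $|\psi(x,\xi)| \lesssim 1$ uniformly, so $\widetilde{\mathcal{F}}$ maps $L^1 \to L^\infty$; continuity away from $\xi = 0$ and decay at $\pm\infty$ follow from dominated convergence and the Riemann--Lebesgue lemma, using that $\xi \mapsto \psi(x,\xi)$ is continuous for $\xi \neq 0$ (the only obstruction at $\xi=0$ is the possible jump between the $\xi>0$ and $\xi<0$ formulas and the singularity in $T$ when $V$ is generic). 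For (iii), the key is the intertwining identity $\widetilde{\mathcal{F}}(H\phi)(\xi) = \xi^2 \widetilde{\mathcal{F}}\phi(\xi)$, which comes from $H_x\psi(x,\xi) = \xi^2\psi(x,\xi)$ and integration by parts; combined with (i) this gives $\|\widetilde{\mathcal{F}}\phi\|_{L^2(\jxi^{2s}d\xi)}^2 \sim \|(1+H)^{s/2}\phi\|_{L^2}^2 \sim \|\phi\|_{H^s}^2$, the last equivalence being the standard fact that $H$ generates the same Sobolev scale as $-\partial_x^2$ (true because $V$ is bounded and rapidly decaying).

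\textbf{Part (iv).} This is the point where I expect the main difficulty. Formally, $\partial_\xi \widetilde{f}(\xi) = \int \overline{\partial_\xi\psi(x,\xi)}\,f(x)\,dx$, so one wants to control $\|\langle\xi\rangle^s \int \overline{\partial_\xi\psi(x,\xi)} f(x)\,dx\|_{L^2_\xi}$. Differentiating \eqref{psi>}--\eqml{psi<} in $\xi$, the terms fall into two types: those where $\partial_\xi$ hits a smooth symbol ($T$, $R_\pm$, $m_\pm$), which by \eqref{estimatesm} and \eqref{estimatesTR} produce kernels comparable to $\psi$ itself (possibly with an extra factor of $\langle x\rangle$ from $\partial_\xi m_\pm$), and one term where $\partial_\xi$ hits the oscillatory factor $e^{\pm ix\xi}$, producing a factor of $x$. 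The benign terms are handled by the mapping property (iii) applied to $f$ and to $\langle x\rangle f$. The dangerous term is $\int \overline{(ix)\psi(x,\xi)} f(x)\,dx$; naively this is $\widetilde{\mathcal{F}}(xf)$ but $xf$ need not decay, which is exactly the reason the weighted estimate is delicate. The standard resolution is to write $x e^{ix\xi} = \frac{1}{i}\partial_\xi e^{ix\xi}$ and integrate by parts \emph{in $x$} after re-expressing things, or equivalently to use the commutator structure: the operator $x$ conjugated through $\widetilde{\mathcal{F}}$ is $i\partial_\xi$ plus a bounded correction coming from the $\xi$-derivatives of the symbols, so $\|\widetilde{\mathcal{F}}(xf)\|_{L^2}$ can be bounded \emph{provided} $\widetilde{f}$ is continuous at $\xi=0$ — this hypothesis kills the boundary term that would otherwise appear when integrating by parts across the frequency origin, and this is precisely why continuity at zero is assumed. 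Propagating the weight $\langle\xi\rangle^s$ through this argument costs nothing extra, since $\langle\xi\rangle^s$ commutes with $\partial_\xi$ up to lower-order terms absorbed into $\|f\|_{H^s}$. I would carry this out by: (a) reducing via (iii) to controlling the single model term $\int \overline{x\psi(x,\xi)}f(x)\,dx$ weighted by $\langle\xi\rangle^s$; (b) using the representations \eqref{psi>}--\eqref{psi<} to split $x\psi$ into a piece where $x$ multiplies $e^{\pm ix\xi}$ (integrated by parts in $\xi$, using continuity of $\widetilde{f}$ at $0$ to discard the boundary contribution) and pieces where $x$ multiplies the rapidly-decaying factors $m_\pm - 1$, $R_\pm$, or the cutoffs $\chi_\pm'$ (directly bounded by $\|f\|_{L^2}$ using \eqref{estimatesm}); (c) collecting the estimates. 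The cleanest route is simply to invoke the corresponding lemma of \cite{KGV1d}, since this exact statement — or its $s=0$ version — appears there; the present formulation just records the $\langle\xi\rangle^s$-weighted version, which follows by the same proof applied to $H^s$-derivatives of $f$.
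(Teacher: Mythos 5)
The paper offers no proof of this proposition: it is stated as part of the review of spectral theory with the reader referred to \cite{KGV1d}, which is also where your sketch ultimately lands, so your approach is essentially the same and your outline of (i)--(iii) is the standard and correct one. One clarification on (iv): the term where $\partial_\xi$ lands on $e^{\pm ix\xi}$ and produces a factor of $x$ is not actually the delicate point, since the right-hand side of the estimate already contains $\|\langle x\rangle f\|_{H^s}$ and that term is bounded by applying the $L^2$-mapping property directly to $\langle x\rangle f$ (no integration by parts in $\xi$ or commutator argument is needed); the continuity of $\widetilde{f}$ at zero enters for a different reason, namely that $\psi(x,\xi)$ is discontinuous across $\xi=0$ (the definition \eqref{psixk} switches between the $f_+$ and $f_-$ branches there), so without that hypothesis the distributional derivative $\partial_\xi\widetilde{f}$ would carry a Dirac mass at the origin and the left-hand side would not be finite.
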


\medskip
\subsection{The wave operator}

The wave operator associated to $H$ is defined as
\begin{align}
\mathcal{W} = s-\lim_{t \rightarrow \infty} e^{itH} e^{-it H_0} . 
\end{align}
It has the following property, see Section 3.2.3 in \cite{KGV1d}:
\begin{prop}
The wave operator is unitary on $L^2$ and satisfies
\begin{align}
\mathcal{W} = \widetilde{\mathcal{F}}^{-1} \widehat{\mathcal{F}},
\qquad \mathcal{W}^{-1} = \mathcal{W}^\ast =  \widehat{\mathcal{F}}^{-1}\widetilde{\mathcal{F}} , 
\end{align}
and it intertwines $H$ and $H_0$: 
\begin{align}
f(H) = \mathcal{W}  f(H_0) \mathcal{W}^\ast . 
\end{align}
Moreover, $\mathcal{W}$ and $\mathcal{W}^\ast$ extend to bounded operators on $W^{k,p} (\mathbb{R})$ for any $k$ and $1 < p < \infty$. Furthermore, in the exceptional case, if $f_+(-\infty,0) = 1$, this remains true if $p = 1$ or $\infty$.
\end{prop}


\medskip
\subsection{The case of even and odd functions}

\begin{lem}[Parity preservation]\label{lemevenodd0}
Let the potential $V$ be even.
Then, we have 
\begin{align}\label{lemevenoddf+-}
f_+(x,\xi) =  f_-(-x,\xi), \qquad R_+(\xi) = R_-(\xi) \equiv R(\xi), \qquad \psi(x,\xi) = \psi(-x,-\xi),
\end{align}
and the distorted Fourier transform preserve evenness and oddness.
\end{lem}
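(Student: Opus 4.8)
The statement splits into three algebraic identities plus the parity-preservation assertion for $\wtF$. The plan is to exploit the symmetry $x \mapsto -x$ throughout, using the defining ODE problems rather than the explicit formulas where possible.

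First I would establish the identity $f_+(x,\xi) = f_-(-x,\xi)$. Set $g(x,\xi) := f_-(-x,\xi)$. Since $V$ is even, $(-\partial_x^2 + V(x)) g(x,\xi) = \big[(-\partial_y^2 + V(y)) f_-(y,\xi)\big]_{y=-x} = \xi^2 g(x,\xi)$, so $g$ solves the same Helmholtz-type equation \eqref{f+-}. For the boundary condition: as $x \to +\infty$ we have $y = -x \to -\infty$, so $f_-(y,\xi) \sim e^{-iy\xi} = e^{ix\xi}$, i.e. $g(x,\xi) \sim e^{ix\xi}$ as $x \to +\infty$. By uniqueness of the Jost solution with this asymptotics, $g = f_+$, which is the first identity. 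Equivalently, in terms of $m_\pm$, this says $m_+(x,\xi) = m_-(-x,\xi)$, which also follows directly from \eqref{equationm} and the evenness of $V$.

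Next, the reflection coefficients. I would substitute $m_+(x,\xi) = m_-(-x,\xi)$ into the formula \eqref{TRformula} for $R_+(\xi) = \dfrac{\int e^{-2i\xi x} V(x) m_-(x,\xi)\,dx}{2i\xi - \int V(x) m_+(x,\xi)\,dx}$: in the numerator, change variables $x \mapsto -x$ and use $V$ even to get $\int e^{2i\xi x} V(x) m_-(-x,\xi)\,dx = \int e^{2i\xi x} V(x) m_+(x,\xi)\,dx$, which is exactly the numerator of $R_-(\xi)$; the denominators agree since $\int V m_+ = \int V m_-$ by the same change of variables. Hence $R_+ = R_- =: R$. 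Then the identity $\psi(x,\xi) = \psi(-x,-\xi)$ follows by plugging $f_+(x,\xi) = f_-(-x,\xi)$ (and the matching $T(\xi)$, $T(-\xi)$ factors) into the piecewise definition \eqref{psixk}: for $\xi > 0$, $\psi(-x,-\xi) = \tfrac{1}{\sqrt{2\pi}} T(\xi) f_-(-x,\xi) = \tfrac{1}{\sqrt{2\pi}} T(\xi) f_+(x,\xi) = \psi(x,\xi)$, and the case $\xi < 0$ is symmetric.

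Finally, parity preservation of $\wtF$. If $\phi$ is even, then changing variables $x \mapsto -x$ in \eqref{distF} and using $\overline{\psi(-x,\xi)} = \overline{\psi(x,-\xi)}$ (from the identity just proved) and $\phi(-x) = \phi(x)$ gives $\wt\phi(\xi) = \int \overline{\psi(-x,\xi)}\phi(-x)\,dx = \int \overline{\psi(x,-\xi)}\phi(x)\,dx = \wt\phi(-\xi)$, so $\wt\phi$ is even; the odd case is identical with a sign. The same computation applied to $\wtF^{-1}$ (or invoking self-adjointness from Proposition \ref{propFT}(i)) handles the inverse transform. None of the steps is a genuine obstacle; the only point requiring a little care is the boundary-condition bookkeeping in the first step — making sure the reflection of a $-$-type Jost solution lands on a $+$-type one with the correct sign in the exponential — and the consistent tracking of the $T(\xi)$ versus $T(-\xi)$ factors in the piecewise definition of $\psi$.
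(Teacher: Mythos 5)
Your proposal is correct and follows essentially the same route as the paper: uniqueness for the Jost ODE gives $f_+(x,\xi)=f_-(-x,\xi)$ (equivalently $m_+(x,\xi)=m_-(-x,\xi)$), the formula \eqref{TRformula} then yields $R_+=R_-$, and the $\psi$ identity and parity preservation follow from \eqref{psixk} and \eqref{distF}. You have simply written out the change-of-variables details that the paper leaves implicit.
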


\begin{proof}
The first identity in \eqref{lemevenoddf+-} is a consequence of the uniqueness of solutions for the ODE \eqref{f+-},
$R_+=R_-$ follows directly from \eqref{TRformula} and $m_+(x,\xi) = m_-(-x,\xi)$ (see \eqref{m+-}).
The identity for $\psi$ follows from the previous ones and the definition \eqref{psixk}.
The preservation of parity for the distorted Fourier transform then follows from \eqref{distF}.
\end{proof}

\begin{lem}[Formulas with parity]\label{lemevenodd}
Let the potential $V$ be even and exceptional. 


\setlength{\leftmargini}{1.5em}
\begin{itemize}

\smallskip
\item When $f$ is odd, and the zero energy resonance is even, we have the formulas
\begin{align}\label{invFodd}
\begin{split}
\wt{f}(\xi) & =  \int_{\R} \bar{\psi_o(x,\xi)} f(x) \, dx,
\\
f(x) & =  \int_{\R} \psi_o(x,\xi) \wt{f}(\xi) \, d\xi,
\end{split}
\end{align}
where 
\begin{align}\label{Kodd}
\begin{split}
& \psi_o(x,\xi) :=  \mathbf{1}_+(\xi) \chi_+(x) K_o(x,\xi) + \mathbf{1}_-(\xi) \chi_-(x) K_o(-x,-\xi),
 \\
& \sqrt{2\pi} K_o(x,\xi) := T(\xi)m_+(x,\xi)e^{ix\xi} - m_+(x,-\xi) e^{-ix\xi} - R(\xi) m_+(x,\xi)e^{i\xi x},
\end{split}
\end{align}
We also have
\begin{align}\label{TRKodd}
K_o(x,0)=0, 
\end{align}
and can write
\begin{align}\label{invFodd'}
\begin{split}
f(x) & =  \int_0^\infty \big[ \chi_+(x)K_o(x,\xi) - \chi_-(x)K_o(-x,\xi) \big] \wt{f}(\xi) \, d\xi.
\end{split}
\end{align}


\smallskip
\item When $f$ is even, and the zero energy resonance is odd, we have the formulas
\begin{align}\label{invFeve}
\begin{split}
& \wt{f}(\xi) =  \int_{\R_+} \bar{\psi_e(x,\xi)} f(x) \, dx, 
\\
& f(x) =  \int_{\R_+} \psi_e(x,\xi) \wt{f}(\xi) \, d\xi, 
\end{split}
\end{align}
where
\begin{align}\label{Keven}
\begin{split}
& \psi_e(x,\xi) := \mathbf{1}_+(\xi) \chi_+(x) K_e(x,\xi) + \mathbf{1}_-(\xi) \chi_-(x) K_e(-x,-\xi),
\\
& \sqrt{2\pi} K_e(x,\xi) := 
   T(\xi)m_+(x,\xi)e^{ix\xi} +m_+(x,-\xi) e^{-ix\xi} 
  + R(\xi) m_+(x,\xi)e^{i\xi x} .  
\end{split}
\end{align}
We also have
\begin{align}\label{TRKeve}
\quad K_e(x,0)=0. 
\end{align}
and can write
\begin{align}\label{invFeve'}
\begin{split}
f(x) & =  \int_0^\infty \big[ \chi_+(x)K_e(x,\xi) + \chi_-(x)K_e(-x,\xi) \big] \wt{f}(\xi) \, d\xi.
\end{split}
\end{align}


\end{itemize}

\end{lem}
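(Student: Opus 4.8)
The plan is to derive both sets of formulas from the general expressions \eqref{psi>}--\eqref{psi<} for $\psi(x,\xi)$ together with the parity identities of Lemma \ref{lemevenodd0}, and then to extract the cancellation at $\xi=0$ from Lemma \ref{estimatesTR'} (equivalently, from Proposition \ref{LES}). I would treat the odd case in detail and indicate that the even case is entirely parallel. First, for $f$ odd, the distorted transform $\wt f$ is odd by Lemma \ref{lemevenodd0}, so $\wt f$ is determined by its values on $\{\xi>0\}$; correspondingly, in the inversion formula $f(x)=\int_\R \psi(x,\xi)\wt f(\xi)\,d\xi$ one folds the integral over $\xi<0$ onto $\xi>0$ using $\psi(x,\xi)=\psi(-x,-\xi)$ and the oddness of $\wt f$, which is exactly what produces the kernel $\psi_o$ and the folded formula \eqref{invFodd'}. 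Concretely, for $\xi>0$ one starts from \eqref{psi>}, uses $m_-(x,\pm\xi)=m_+(-x,\pm\xi)$ and $R_-=R_+=R$ (Lemma \ref{lemevenodd0}) to rewrite the $\chi_-(x)[\cdots]$ piece in terms of $m_+(-x,\cdot)$, and then combines with the $\xi<0$ contribution; the antisymmetrization in $x$ forced by oddness collapses the two Jost solutions $f_\pm$ into a single function built from $m_+$, giving the displayed $\sqrt{2\pi}K_o(x,\xi)=T(\xi)m_+(x,\xi)e^{ix\xi}-m_+(x,-\xi)e^{-ix\xi}-R(\xi)m_+(x,\xi)e^{i\xi x}$.

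Next I would verify the key cancellation \eqref{TRKodd}, $K_o(x,0)=0$. Setting $\xi=0$ in the formula for $\sqrt{2\pi}K_o$ gives $\big(T(0)-1-R(0)\big)m_+(x,0)$. Since the resonance is even we are in the exceptional case with $a=f_+(-\infty,0)=1$, so Proposition \ref{LES} (i.e.\ \eqref{LESTR0}) yields $T(0)=\tfrac{2a}{1+a^2}=1$ and $R(0)=R_\pm(0)=\tfrac{1-a^2}{1+a^2}=0$; hence the bracket vanishes and $K_o(x,0)=0$. (Alternatively this is immediate from \eqref{estimatesTRodd}, which says $\frac{T(\xi)-1}{\xi}$ and $\frac{R(\xi)}{\xi}$ are bounded, so $T(\xi)-1$ and $R(\xi)$ are $O(\xi)$ near $0$.) Finally, \eqref{invFodd'} is just \eqref{invFodd} with the $\xi<0$ part folded onto $\xi>0$: for $\xi<0$, $\psi_o(x,\xi)=\chi_-(x)K_o(-x,-\xi)$ and $\wt f(\xi)=-\wt f(-\xi)$, so after the change of variables $\xi\mapsto-\xi$ the negative-frequency contribution becomes $-\int_0^\infty \chi_-(x)K_o(-x,\xi)\wt f(\xi)\,d\xi$, which combines with the positive-frequency part $\int_0^\infty\chi_+(x)K_o(x,\xi)\wt f(\xi)\,d\xi$ to give the stated expression; the formula for $\wt f$ in \eqref{invFodd} follows the same way from \eqref{distF} and the definition of $\psi$.

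For the even case one repeats the argument with $f$ even (so $\wt f$ even), now using the \emph{odd} resonance, i.e.\ $a=-1$: then $T(0)=\tfrac{2a}{1+a^2}=-1$ and $R_\pm(0)=0$, so at $\xi=0$ the combination $\sqrt{2\pi}K_e(x,0)=\big(T(0)+1+R_+(0)\big)m_+(x,0)=0$, giving \eqref{TRKeve}; the symmetrization in $x$ forced by evenness turns the relative signs in the $\chi_-$ terms into the $+$ signs appearing in \eqref{Keven} and \eqref{invFeve'}, and the restriction of the $\xi$-integral to $\R_+$ in \eqref{invFeve} reflects that $\wt f(\xi)$ is even and only the symmetric combination $\psi_e(x,\xi)+\psi_e(x,-\xi)$ survives. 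I expect the only mildly delicate point to be bookkeeping the signs and the $\chi_\pm$ cutoffs correctly when folding the inversion integral and recombining the $f_\pm$ Jost solutions into a single $m_+$-based kernel; the cancellation at $\xi=0$ itself is immediate once Proposition \ref{LES} is invoked. Everything else — the decay/regularity of the kernels — is inherited from \eqref{estimatesm} and the smoothness of $T,R$.
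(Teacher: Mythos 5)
Your proposal is correct and follows essentially the same route as the paper: both derive $K_o$, $K_e$ by starting from \eqref{distF} with \eqref{psi>}--\eqref{psi<}, folding the integral using the parity of $f$ together with $m_-(-x,\xi)=m_+(x,\xi)$ and $R_+=R_-$, and both obtain the vanishing at $\xi=0$ from Proposition \ref{LES} via $a=\pm 1$, so that $\sqrt{2\pi}K_o(x,0)=(T(0)-1-R(0))m_+(x,0)=0$ and likewise for $K_e$. No gaps.
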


Note that there is a slight asymmetry between the cutoffs in $x$ and $\xi$ since we are using smooth cutoffs $\chi_\pm$ in the $x$ variable.

\begin{proof}
The formulas \eqref{invFodd}-\eqref{Kodd} and \eqref{invFeve}-\eqref{Keven} 
can be directly verified starting from \eqref{distF} and using \eqref{psi>}-\eqref{psi<},
as well as $R_+=R_-$.
We show this in the case when $f$ is odd. We have, for $\xi >0$, and denoting $\bar{z} = z^\ast$,
\begin{align*}
\wt{f}(\xi) & = \int_\R \big[ \chi_+(x) T(\xi)m_+(x,\xi)e^{ix\xi} 
  + \chi_-(x) (m_-(x,-\xi)e^{i\xi x} + R
  (\xi) m_-(x,\xi)e^{-i\xi x}) \big]^\ast \, f(x) \, dx
  \\
  & = \int_\R \chi_+(x) \big[ T(\xi)m_+(x,\xi)e^{ix\xi} 
  - m_-(-x,-\xi)e^{-i\xi x} - R(\xi) m_-(-x,\xi)e^{i\xi x} \big]^\ast \, f(x) \, dx
  \\
  & = \int_\R \chi_+(x) \bar{K_o(x,\xi)} \, f(x) \, dx.
\end{align*}
Using also that $m_-(-x,\xi) = m_+(x,\xi)$, and a similar identity for $\xi<0$, 
gives us the first formula in \eqref{invFodd}.
The property \eqref{TRKodd} follows from Proposition \ref{LES} since $T(0) = 1$ and $R(0) = 0$ 
under our parity assumptions ($a=1$ when the resonance is even).

\eqref{invFeve}-\eqref{TRKeve} follow similarly by explicit computations, 
and using again Proposition \ref{LES} to see that
$a=-1$ when the resonance is odd and therefore $T(0) = -1$ and $R(0) = 0$.
\end{proof}


\smallskip
\section{Linear decay estimates}
\label{DE}
Let 
\begin{align}
B := \sqrt{-\partial_{xx} + V + 1}.
\end{align}
We have the following standard decay estimate:

\begin{proposition}[Pointwise decay]\label{propdecay0}
Assume that either $V$ is generic, or that $V$ is exceptional with an
even, resp. odd, zero energy resonance and that $f$ is odd, resp. even.
Then, we have
\begin{align}\label{dec} 
{\|e^{itB} P_c f \|}_{L^\infty} \lesssim \jt^{-1/2} {\| \jxi^{3/2} \wt{f} \|}_{L^\infty} + 
  \jt^{-11/20} {\| \jxi \partial_\xi \wt{f} \|}_{L^2} + \jt^{-7/12} {\| f \|}_{H^4}.
\end{align}
\end{proposition}

This statement corresponds to Proposition 3.11 in \cite{KGV1d}, 
where the factor of $\jt^{-11/20}$, which is not optimal, 
is replacing the factor of $\jt^{-3/4 + \beta \gamma}$
(where $\beta,\gamma$ were chosen so that $\beta\gamma = 1/4 -$);
this latter was present in the cited work due to the ``bad'' frequencies $\pm \sqrt{3}$
that required a special treatment in \cite{KGV1d}, which is not the case in the present article.
We also refer the reader to Lemma 2.2 of \cite{LLS20} where a linear estimate
similar to \eqref{dec} is proved (for the case $V=0$); the slightly different rate 
of decay in front of the weighted $L^2$ norm is due to the different handling
of the high frequencies.

\medskip
Next, we provide an improved local decay estimate.
It will not be used in this work, but we include it for the sake of completeness and future reference.
Such improved decay is one of the keys to nonlinear stability, 
but it will be more convenient for us to take another route to proving nonlinear bounds, 
working in (distorted) Fourier space, rather than using physical space estimates.


\begin{proposition}[Local decay]\label{proplocdec}
Assume that either $V$ is generic, or that $V$ is exceptional with an
even, resp. odd, zero energy resonance and that $f$ is odd, resp. even.
Then, we have
\begin{align}\label{locdec1}
\begin{split}
{\big\| \jx^{-2} e^{itB} P_c f \big\|}_{L^\infty_x} 
  \lesssim \jt^{-1} \Big( {\big\| \partial_\xi \wt{f} \big\|}_{L^1_\xi}  
  + {\big\| \wt{f} \big\|}_{L^1_\xi} + {\| f \|}_{H^1_x} \Big). 
\end{split}
\end{align}
\end{proposition}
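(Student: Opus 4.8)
The plan is to represent $e^{itB}P_c f$ via the distorted Fourier transform and exploit the extra vanishing of the generalized eigenfunctions $\psi_o, \psi_e$ (equivalently $K_o, K_e$) at $\xi = 0$, furnished by \eqref{TRKodd}--\eqref{TRKeve}. Concretely, in the odd-resonance case one writes, using \eqref{invFodd'},
\begin{align*}
e^{itB}P_c f(x) = \int_0^\infty e^{it\sqrt{\xi^2+1}}\big[\chi_+(x)K_o(x,\xi) - \chi_-(x)K_o(-x,\xi)\big]\,\wt f(\xi)\,d\xi,
\end{align*}
and analogously with $K_e$ and a $+$ sign in the even-resonance case; in the generic case one uses the ordinary $\psi$ with the vanishing of $\psi(\cdot,0)$ coming from $T(0)=0$. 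The weight $\jx^{-2}$ will be absorbed by the $x$-growth of $m_\pm$ recorded in \eqref{estimatesm}: on the support of $\chi_+$ one has $|\jx^{-2}\partial_\xi^\alpha K_o(x,\xi)|\lesssim \jx^{-2}\cdot\jx^{1+\alpha}\jxi^{-1-\alpha}$, which for $\alpha\le 1$ is bounded (uniformly in $x$) by $\jxi^{-1-\alpha}$, and similarly for the reflected piece since on $\supp\chi_-$ the argument $-x$ is $\le 1$.

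The next step is a stationary-phase / integration-by-parts argument in $\xi$ on the half-line. The phase $\phi(\xi)=\sqrt{\xi^2+1}$ has $\phi'(\xi)=\xi/\sqrt{\xi^2+1}$, which vanishes (only) at $\xi=0$, so there is no genuine interior stationary point on $(0,\infty)$ but the endpoint $\xi=0$ is degenerate. Here the cancellation $K_o(x,0)=0$ (resp.\ $K_e(x,0)=0$) is decisive: it lets us write $K_o(x,\xi) = \xi\, \widetilde K_o(x,\xi)$ with $\widetilde K_o$ enjoying the same estimates as $\partial_\xi K_o$, so that the boundary term produced by one integration by parts in $\xi$ — which would otherwise be $O(t^{-1})$ times $\wt f(0)$ divided by $\phi'(0)=0$, hence divergent — is instead controlled. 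After integrating by parts once, using $\partial_\xi(e^{it\phi})=it\phi' e^{it\phi}$ and $\frac{1}{\phi'(\xi)} = \frac{\sqrt{\xi^2+1}}{\xi}$, the factor $K_o/\phi' \sim \widetilde K_o \cdot \sqrt{\xi^2+1}$ is regular at the origin, and one picks up a gain of $t^{-1}$ together with terms involving $\partial_\xi\wt f$, $\wt f$ (from differentiating the amplitude $\sqrt{\xi^2+1}\,\widetilde K_o$), and a boundary term at $\xi=0$ that now vanishes. Estimating the resulting $\xi$-integrals in $L^\infty_x$ by $L^1_\xi$ norms, and using $\|\jxi^{-1}\partial_\xi\wt f\|_{L^1_\xi} + \|\wt f\|_{L^1_\xi}\lesssim \|\partial_\xi\wt f\|_{L^1_\xi} + \|\wt f\|_{L^1_\xi}$, produces the right-hand side of \eqref{locdec1}; the $\|f\|_{H^1_x}$ term will enter to handle the high-frequency part (large $\xi$), where one integrates by parts repeatedly against the non-degenerate phase and trades $\xi$-decay for Sobolev regularity via Proposition \ref{propFT}(iii)--(iv).

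The main obstacle I expect is organizing the behavior near $\xi=0$ so that the single factor $\xi$ from $K_o(x,0)=0$ exactly compensates the single zero of $\phi'$ at the origin, while simultaneously keeping all $x$-dependence uniform after multiplying by $\jx^{-2}$; one must be careful that differentiating the amplitude in $\xi$ does not destroy the $x$-uniformity (this is why only $O(\jx^{1})$, i.e.\ one $\xi$-derivative's worth of growth, can be afforded, matching the single integration by parts). A secondary technical point is the smooth-cutoff asymmetry noted after Lemma \ref{lemevenodd}: the cutoffs $\chi_\pm(x)$ are not compactly supported away from each other, so on the overlap region $|x|\lesssim 1$ one simply bounds everything crudely (the weight and the integral in $\xi$ over a fixed region give the stated $t^{-1}$ after the same integration by parts), while for $|x|$ large only one of $\chi_\pm$ is active and the estimate \eqref{estimatesm} in the region $\pm x > -1$ applies cleanly. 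Splitting the $\xi$-integral into $|\xi|\lesssim 1$ and $|\xi|\gtrsim 1$ at the outset, and treating the two regimes by the degenerate-endpoint and non-degenerate-phase arguments respectively, should make the bookkeeping routine.
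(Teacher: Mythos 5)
Your proposal is correct and follows essentially the same route as the paper: represent $e^{itB}P_cf$ via \eqref{invFodd'} (resp.\ \eqref{invFeve'}), use the vanishing $K_o(x,0)=0=\wt f(0)$ to integrate by parts once through $e^{it\jxi}=\frac{\jxi}{it\xi}\partial_\xi e^{it\jxi}$, and absorb the $O(\jx^2)$ growth of $\partial_\xi\big(\xi^{-1}K_o(x,\xi)\big)$ with the weight $\jx^{-2}$. The only cosmetic difference is that the paper does not split into low and high frequencies (the amplitude bounds \eqref{Ko+bounds} make the single integration by parts work uniformly in $\xi$, with $\|f\|_{H^1}$ entering only via Sobolev for $t\lesssim 1$), but your version is equally valid.
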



\begin{rem}
One should notice that, in the more general case treated in \cite{KGV1d}, 
or in the exceptional case without the present symmetry assumptions, 
decay rates as strong as \eqref{locdec1} are not available.
\end{rem}

\begin{proof}[Proof of Proposition \ref{proplocdec}] 
We are first going to give 
the details of the proof in the case of an exceptional potential with even resonance
and with $f$ odd, and we will then indicate how the other cases can be treated similarly.

\smallskip
\noindent
Let us bound the first quantity on the left-hand side of \eqref{locdec1}. 
In view of Sobolev's embedding, we may assume $t \geq 1$.
We look at different cases depending on the parity.

\medskip
\noindent
{\it The case of odd $f$}.
By \eqref{invFodd}-\eqref{invFodd'}, we can write:
\begin{align*}
\big(e^{i t B }f\big)(x) = \chi_+(x) 
 \int_{\R_+} e^{ it\jxi} K_o(x,\xi)  \wt{f}(\xi) \, d\xi 
 - \chi_-(x)\int_{\R_+} e^{it\langle \xi \rangle} K_o(-x,\xi)  \wt{f}(\xi) \, d\xi. 
\end{align*}
We will only bound the first term above, since the other one can be treated identically.
Observe that we can write \eqref{Kodd} as
\begin{align*}
\sqrt{2\pi} K_o(x,\xi) = m_+(x,-\xi) 2i \operatorname{sin}(x\xi) + [m_+(x,\xi) - m_+(x,-\xi)] e^{ix\xi}
  \\ + (T(\xi) - 1 - R(\xi))m_+(x,\xi) e^{ix\xi}.
\end{align*}
This expression makes the cancellations even more apparent, leading, 
with the help of \eqref{estimatesm} and \eqref{estimatesTRodd}, to the estimates
\begin{align}\label{Ko+bounds}
\begin{split}
& |K_o(x,\xi)| \lesssim \min( |x\xi| , 1) + \min(\xi,1), \qquad |\partial_\xi K_o(x,\xi)| \lesssim \jx,
\\
& \big|\partial_\xi \big(\xi^{-1} K_o(x,\xi)\big) \big| \lesssim \jx^2 , \qquad |\xi|\leq 1,
\end{split}
\end{align}
valid in the support of $\chi_+(x)$.
Using that $K_o(x,0) = 0 = \wt{f}(0)$, 
we can integrate by parts through the identity $e^{it\jxi} = \frac{\jxi}{i t\xi} \partial_\xi e^{it\jxi}$ to obtain
\begin{align*}
& \frac{\chi_+(x)}{\jx^2} \left| \int_{\R_+} e^{it\jxi} K_o(x,\xi) \wt{f}(\xi) \, d\xi \right| 
  \\
& \qquad \lesssim \frac{1}{t \jx^2} \int \frac{\jxi}{|\xi|} |K_o(x,\xi)| |\partial_\xi \wt{f}(\xi)| \,d\xi 
  + \frac{1}{t \jx^2} \int |\wt{f}(\xi)| 
  \left| \partial_\xi \left[\frac{\jxi}{\xi} K_o(x,\xi) \right] \right| \,d\xi.
\end{align*}
Using the bounds \eqref{Ko+bounds}
this is
\begin{align*}
\lesssim \frac{1}{t \jx^2} \int \frac{\jxi}{|\xi|} 
  (\min(|x\xi|,1) + \min(\xi,1)) |\partial_\xi \wt{f}(\xi)| \,d\xi 
  + \frac{1}{t} \int |\wt{f}(\xi)| \,d\xi 
  \\
  \lesssim \frac{1}{t} \left[ {\|  \partial_\xi \wt{f} \|}_{L^1}
  + {\| \wt{f} \|}_{L^1}  \right].
\end{align*}

\medskip
\noindent
{\it The case of even $f$}.
When $f$ is even we can use \eqref{invFeve}-\eqref{invFeve'} to write
\begin{align*}
e^{i t B } f  = \chi_+(x) \int_{\R_+} e^{ i t \jxi} K_e(x,\xi) \wt{f}(\xi) \, d\xi 
  +  \chi_-(x) \int_{\R_+} e^{ i t \langle \xi \rangle} K_e(-x,\xi)  \wt{f}(\xi) \, d\xi.
 \end{align*}  
Similarly to the odd case, also here we have cancellation at zero 
(see \eqref{TRKeve}).
It suffices again to just look at the first integral. Writing
\begin{align*}
\sqrt{2\pi} K_e(x,\xi) 
  = -m_+(x,\xi) 2i \sin(x\xi) + [ m_+(x,-\xi) - m_+(x,\xi)] e^{-ix\xi}
  \\ 
  + (T(\xi) + 1 + R(\xi))m_+(x,\xi) e^{ix\xi},
\end{align*}
we see that estimates like \eqref{Ko+bounds} hold for $K_e$ as well,
and we can apply the same argument above.

\medskip
\noindent
{\it The case of generic $V$}.
When $V$ is generic we have $T(0)=0$ and $R_\pm(0)=-1$
and arguments similar to those above can be applied to the general formula \eqref{distF}
with \eqref{psi>}-\eqref{psi<}.
%
%
\end{proof}


\smallskip
\section{The quadratic spectral distribution}\label{TQSD}
In this section we analyze the quadratic (and cubic) spectral distribution
and will mainly focus on the analysis that is relevant to the case of odd solutions 
with an even resonance.
At the end of the section we will also indicate how to analyze similarly
the case of even solutions with odd resonances, 
and the case of generic $V$.
We denote, for $f$ a complex-valued function,
$$
f_+ = f, \qquad f_- = \overline{f}.
$$



\subsubsection*{The odd case}
Recall that in the case of odd data (and solution) we assume that the zero energy resonance is even.
Our starting point are the formulas \eqref{invFodd}-\eqref{invFodd'}.

\begin{proposition}[The odd case]\label{propmuodd}
Let $f,g \in \mathcal{S}$ be odd functions, and $a=a(x)$ be odd and satisfying \eqref{ab}.
There exists a distribution $\mu^{o}_{\iota_1\iota_2}$, $\iota_1,\iota_2 \in \{+,-\}$,
such that, for $\xi \geq 0$,
\begin{align}
\label{propmu2}
\widetilde{\mathcal{F}} \big(a \, f_{\iota_1} g_{\iota_2} \big)(\xi) = \iint_{(\R_+)^2} 
  (\wt{f})_{\iota_1} (\eta) (\wt{g})_{\iota_2} (\sigma)\,\mu^o_{\iota_1 \iota_2}(\xi,\eta,\sigma) \, d\eta \,d\sigma,
\end{align}
with odd extension to $\xi < 0$,
and such that $\mu^o_{\iota_1\iota_2}$ can be split into a singular and a regular part as follows:
\begin{align*}
(2\pi) \mu^o_{\iota_1\iota_2} = \mu^{o,S}_{\iota_1\iota_2} + \mu^{o,R}_{\iota_1\iota_2},
\end{align*}
where:

\begin{itemize}

\item The singular part $\mu^{o,S}_{\iota_1\iota_2}$ is given by
\begin{align}\label{propmuS}
\begin{split}
\mu^{o,S}_{\iota_1 \iota_2}(\xi,\eta,\sigma) & = \sum_{\lambda, \mu, \nu \in \{+,-\}} 
\bar{a_{\lambda} (\xi)} (a_{\mu}(\eta))_{\iota_1} (a_{\nu} (\s))_{\iota_2}
  \\ & \times \ell_{+\infty} \left[ \sqrt{\frac{\pi}{2} }\delta(p) + 
  \,i \, \varphi^{\ast}(p,\eta,\s)\, \pv \frac{\what{\phi}(p)}{p} \right]_{p = \lambda \xi - \mu \iota_1 \eta - \nu \iota_2 \sigma}
\end{split}
\end{align}
where the coefficients are given as
\begin{align}\label{propmuScoeff}
\begin{split}
a_{+} (\xi) = T(\xi)-R(\xi), \qquad a_{-} (\xi) = -1,
\end{split}
\end{align}
$\phi \in \mathcal{S}$ is even with integral one, and 
\begin{align}\label{propmustar}
\varphi^\ast(p,\eta,\sigma) = \varphi_{\leq -D}\big(p R(\eta,\sigma)\big),
\quad R(\eta,\sigma) = \frac{\langle \eta \rangle \langle \sigma \rangle}{\langle \eta \rangle + \langle \sigma \rangle}.
\end{align}


\medskip
\item The regular part $\mu^{o,R}_{\iota_1\iota_2}$ satisfies, for $\xi,\eta,\s >0$
\begin{equation}
\label{estmu1}
\mu^{o,R}_{\iota_1 \iota_2}(\xi,\eta,\sigma) 
  = \frac{\xi \cdot \eta  \cdot \s}{\jxi\jeta\jsig} \mathfrak{q}^{o}_{\iota_1 \iota_2}(\xi,\eta,\sigma)
\end{equation}
where 
\begin{equation}
\label{estmu2}
| \partial_\xi^a \partial_\eta^b \partial_\sigma^c \mathfrak{q}^{o}_{\iota_1 \iota_2}(\xi,\eta,\sigma) | 
  \lesssim \sup_{\mu,\nu} \frac{1}{\langle \xi +\mu \eta +\nu \sigma \rangle^N} |R(\eta,\s)|^{1+a+b+c},
  \qquad |a|+|b|+|c| \leq N.
\end{equation}
\end{itemize}

\end{proposition}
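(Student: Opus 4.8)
\textbf{Proof plan for Proposition \ref{propmuodd}.}
The plan is to insert the inversion formula \eqref{invFodd'} for the two odd inputs $f_{\iota_1},g_{\iota_2}$ into the distorted Fourier transform \eqref{distF} of $a\,f_{\iota_1}g_{\iota_2}$, and then expand $a$ near $\pm\infty$ using \eqref{ab} and the kernels $K_o$ via their near-infinity expansions. Concretely, for $\xi>0$ write $\widetilde{\mathcal F}(a f_{\iota_1}g_{\iota_2})(\xi) = \int \overline{\psi_o(x,\xi)}\, a(x) f_{\iota_1}(x) g_{\iota_2}(x)\,dx$, substitute $f_{\iota_1}(x) = \int_0^\infty [\chi_+(x)K_o(x,\eta)-\chi_-(x)K_o(-x,\eta)]_{\iota_1}\widetilde f(\eta)\,d\eta$ (and similarly for $g$), and peel off the $\chi_\pm$ to localize to $x>0$ or $x<0$. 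By the reflection symmetries this reduces everything to the $x>0$ region, so I will only carry out that computation and indicate the parity-induced sign bookkeeping for $x<0$. On $x>0$ one has $K_o(x,\eta) = (2\pi)^{-1/2}\big(T(\eta)m_+(x,\eta)e^{ix\eta} - m_+(x,-\eta)e^{-ix\eta} - R(\eta)m_+(x,\eta)e^{ix\eta}\big)$, so after expanding each $m_+$ as $1 + (m_+-1)$ and each $a$, $T$, $R$ as $a_{+\infty} + (a-a_{+\infty})$ etc., the integrand becomes a finite sum of terms, each of which is a product of three oscillatory factors $e^{\pm ix\eta}, e^{\pm ix\sigma}, e^{\pm ix\xi}$ times a factor that is either Schwartz-in-$x$ (the ``regular'' pieces, coming from any term carrying at least one $(m_+-1)$ or $a-a_{\pm\infty}$ or $T-1$ or $R$ or the cutoff derivative) or simply a bounded constant times the slowly-varying remainder (the ``singular'' piece, coming from the term where all factors are replaced by their values at infinity).

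The second step is to identify the singular and regular parts. For the term where everything is frozen at infinity, the $x$-integral over $[0,\infty)$ of $\varphi(x)e^{ipx}$-type expressions (with $p = \lambda\xi - \mu\iota_1\eta - \nu\iota_2\sigma$ the resulting phase) produces, after a standard regularization using the even Schwartz function $\phi$ with $\int\phi=1$, a distribution of the form $\sqrt{\pi/2}\,\delta(p) + i\,\mathrm{p.v.}\,\widehat\phi(p)/p$; the cutoff $\varphi^\ast(p,\eta,\sigma)=\varphi_{\leq -D}(pR(\eta,\sigma))$ is inserted to localize to the genuinely singular regime $|p|\lesssim R(\eta,\sigma)^{-1}$, the complementary part $\varphi_{>-D}(pR(\eta,\sigma))$ being absorbed (a non-stationary phase / integration-by-parts argument) into the regular part where it satisfies \eqref{estmu2}. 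This is precisely the structure recorded in \eqref{propmuS}, and I expect the bookkeeping of the coefficients $a_+(\xi)=T(\xi)-R(\xi)$, $a_-(\xi)=-1$ and the sum over $\lambda,\mu,\nu\in\{+,-\}$ to match the three $\pm$ choices inside each $K_o$ (or its conjugate $\overline{\psi_o}$). This is essentially the argument of the corresponding proposition in \cite{KGV1d}, and I would refer there for the detailed regularization; the novelty here is only that the frozen coefficients at zero frequency are the same as in the general case.

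The third and most delicate step is the estimate \eqref{estmu1}--\eqref{estmu2} on the regular part, and in particular the extraction of the prefactor $\xi\cdot\eta\cdot\sigma/(\jxi\jeta\jsig)$. The vanishing in $\eta$ and $\sigma$ comes from the cancellation $K_o(x,0)=0$ in \eqref{TRKodd}: writing $K_o(x,\eta) = \eta\cdot \big(\eta^{-1}K_o(x,\eta)\big)$ and using the bounds \eqref{Ko+bounds} together with \eqref{estimatesTRodd} (so that $(T(\eta)-1)/\eta$ and $R(\eta)/\eta$ are as regular and decaying as $m_+-1$), each regular piece gains one power of $\eta$, one of $\sigma$, and — from the $\overline{\psi_o}$ factor — one of $\xi$. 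The $x$-integral is then of the form $\int_0^\infty \varphi_{\mathrm{reg}}(x)\, g_1(x,\eta)g_2(x,\sigma)g_3(x,\xi) e^{ix(\lambda\xi-\mu\iota_1\eta-\nu\iota_2\sigma)}\,dx$ with $\varphi_{\mathrm{reg}}$ and the $g_j$'s Schwartz or better in $x$; repeated integration by parts in $x$ yields the decay $\langle \xi+\mu\eta+\nu\sigma\rangle^{-N}$, and differentiating in $\xi,\eta,\sigma$ brings down at most one extra power of $x$ per derivative, hence at most $|R(\eta,\sigma)|$ per derivative after the $x$-integration (since $\langle\eta\rangle^{-1},\langle\sigma\rangle^{-1}\lesssim R(\eta,\sigma)^{-1}$ controls the growth). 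The hard part will be organizing this large but finite collection of terms cleanly — tracking which combinations of $(m_+-1)$, $(a-a_{\pm\infty})$, $(T-1)$, $R$, and cutoff-derivative factors land in the regular bucket, and verifying uniformly that the number of gained $\xi,\eta,\sigma$ powers is exactly one each and no more is lost — rather than any single estimate, each of which is routine non-stationary phase. Finally, the even-resonance/even-data and generic-$V$ cases follow by the same scheme using \eqref{invFeve'}, \eqref{TRKeve} and \eqref{estimatesTReven} in place of their odd counterparts (respectively \eqref{distF}, \eqref{psi>}--\eqref{psi<} with $T(0)=0$, $R_\pm(0)=-1$), and I would only indicate the differences.
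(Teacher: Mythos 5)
Your overall strategy coincides with the paper's: insert the inversion formula \eqref{invFodd'} into \eqref{distF}, expand $m_+ = 1 + (m_+-1)$ and $a = \ell_{+\infty} + (a-\ell_{+\infty})$, recognize the completely frozen term as $\ell_{+\infty}$ times the Fourier transform of $\chi_+^3$ (which yields the $\sqrt{\pi/2}\,\delta(p) + \pv$ structure after the regularization by $\phi$, cf.\ \eqref{muprop5.1}), split off the far-from-singular part of the $\pv$ with the cutoff $\varphi^\ast$, and obtain \eqref{estmu1}--\eqref{estmu2} from the vanishing $K_o(x,0)=0$ together with repeated integration by parts in $x$. This is essentially the paper's proof.

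There is, however, one step in your sorting of terms that, as written, would fail. You classify as ``regular'' any term carrying a factor of $T-1$ or $R$, on the grounds that such terms are ``Schwartz-in-$x$.'' They are not: $T(\eta)-1$ and $R(\eta)$ depend only on the frequency variable and contribute no decay in $x$. A term such as $\ell_{+\infty}\,(T(\eta)-1)\int \chi_+^3(x)\,e^{ixp}\,dx$ still produces the full $\delta(p)+\pv$ singularity, and a $\delta$-distribution cannot be written in the form \eqref{estmu1} with the pointwise bounds \eqref{estmu2}. These contributions must stay in the singular part --- which is precisely why the coefficients in \eqref{propmuScoeff} are the frequency-dependent $a_+(\xi)=T(\xi)-R(\xi)$, $a_-(\xi)=-1$ rather than the constants $\pm1$ obtained by freezing $T$ at $1$ and $R$ at $0$. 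Your second step, where you match exactly these coefficients, already presupposes the correct classification, so the two halves of your plan are inconsistent. The repair is the decomposition \eqref{Kosplit}: $K_o^S(x,\xi)=(2\pi)^{-1/2}\sum_\lambda a_\lambda(\xi)e^{i\lambda x\xi}$ retains the full $T,R$, only the $(m_+-1)$ factors go into $K_o^R$, the singular part is the all-$K^S$ term against the weight $\ell_{+\infty}(\chi_+^3-\psi)$, and the vanishing of $\mu^S$ and of the leftover $\pv$-tail at $\xi=0$, $\eta=0$ or $\sigma=0$ comes from the algebraic cancellation $a_+(0)+a_-(0)=0$ (i.e.\ from $K_o^S(x,0)=0$, the Fourier-side form of \eqref{TRKodd}) rather than from any $x$-decay. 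With this correction the rest of your outline goes through as in the paper.
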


\begin{rem}[About the formula \eqref{propmuS}]
It is important to note that the formula \eqref{propmuS} gives a measure which vanishes when one of the three
frequencies $\xi,\eta$ or $\s$ is zero.
To see this  when $\eta = 0$, say, we can fix $\iota_1=\iota_2=+$ since they are irrelevant,
and recall that $a_+(0)-1=0$, so that $a_\mu(0) = \mu$ and we have
\begin{align}\label{muSat0}
\begin{split}
\mu^{o,S}_{++}(\xi,0,\sigma) 
& = \sum_\mu \mu \sum_{\lambda, \nu}
\bar{a_{\lambda} (\xi)} \cdot a_{\nu}(\s)
  \\
  & \times \ell_{+\infty} \left[ \sqrt{\frac{\pi}{2} }\delta(p_0) + 
  \,i \, \varphi^{\ast}(p_0R(0,\s))\, \pv\frac{\what{\phi}(p_0)}{p_0} \right]_{p_0 = \lambda \xi - \nu \sigma} = 0
\end{split}
\end{align}
The same argument works symmetrically when $\s=0$, and similarly when $\xi=0$.
\end{rem}

\begin{proof}[Proof of Proposition \ref{propmuodd}]
In what follows we let $\iota_1=\iota_2=+$ for simplicity 
and also omit the superscripts $o$ from the formulas, so that, for example,
$\mu$ will stand for $\mu^o_{\iota_1\iota_2}$.
Since $a f g(x)$ is odd, from the formulas \eqref{invFodd} and \eqref{invFodd'},
we formally have \eqref{propmu2} by defining, for $\xi > 0$
\begin{align}\label{propmu1}
\begin{split}
\mu(\xi,\eta,\sigma) 
= \int_{\R} a(x) \chi_+(x) \overline{K_o(x,\xi)} 
  \, & \big[ \chi_+(x)K_o(x,\eta) - \chi_-(x)K_o(-x,\eta) \big] 
  \\ \times & \big[ \chi_+(x)K_o(x,\s) - \chi_-(x)K_o(-x,\s) \big] \,dx,
\end{split}
\end{align}
and for $\xi<0$
\begin{align}\label{propmu1'}
\begin{split}
\mu(\xi,\eta,\sigma) 
= \int_{\R} a(x) \chi_-(x) \overline{K_o(-x,-\xi)} 
  \, & \big[ \chi_+(x)K_o(x,\eta) - \chi_-(x)K_o(-x,\eta) \big] 
  \\ \times & \big[ \chi_+(x)K_o(x,\s) - \chi_-(x)K_o(-x,\s) \big] \,dx
  \\
  & = - \mu(-\xi,\eta,\sigma).
\end{split}
\end{align}
Note that the limit at $\xi=0$ exists and is zero. 
The limit as $\eta,\s \rightarrow 0^+$ is also zero.

Recall the formula \eqref{Kodd} and decompose $K_o(x,\xi)$ into a singular and regular part:
\begin{align}\label{Kosplit}
\begin{split}
& K_o(x,\xi) =  K_o^{S} (x,\xi) + K_o^{R} (x,\xi),
\\
& \sqrt{2\pi}K_o^{S}(x,\xi) = T(\xi) e^{ix\xi} - e^{-ix\xi}  - R(\xi) e^{ix\xi} 
  = \sum_{\l\in\{+,-\}} a_\l(\xi)e^{i \l x\xi},
\\
& \sqrt{2\pi}K_o^{R}(x,\xi) = (T(\xi)-R(\xi))(m_+(x,\xi)-1)e^{ix\xi} - (m_+(x,-\xi)-1)e^{-ix\xi}.
\end{split}
\end{align}
Note that, using \eqref{TRKodd}, 
\eqref{estimatesTR}, and \eqref{estimatesm}, 
on the support of $\chi_+(x)$ we have (see also \eqref{Ko+bounds})
\begin{align}
\label{mupropK1}
& |K_o(x,\xi)| 
+ |K_o^{S}(x,\xi)|   \lesssim \min(|\xi|,1) + \min (|x\xi|,1),
\\
\label{mupropK2}
& |K_o^{R}(x,\xi)| \lesssim 
  \jx^{-N+1} \min(|\xi|,1).
\end{align}

Using the above decomposition we can write, for $\xi\geq 0$,
\begin{align}\label{muprop5}
\begin{split}
\mu(\xi,\eta,\sigma) 
& = 
  \int_{\R} a(x) (\chi_+(x))^3 \overline{K^{S}_o(x,\xi)} K^{S}_{o}(x,\eta) K^{S}_{o}(x,\sigma) \,dx
  + \mu_{R,0}(\xi,\eta,\s),
\end{split}
\end{align}
where $\mu_{R,0}$ is a remainder term that will be absorbed in $\mu_R$. 
For the leading order we first write $a\chi_+^3 = (a - \ell_{+\infty})\chi_+^3 + \ell_{+\infty}\chi_+^3$,
and note that the first function 
decays as fast as $a-\ell_{+\infty}$;
then, we use the fact that 
\begin{align}\label{muprop5.1} 
\widehat{(\chi_+)^3}(\xi)
= \sqrt{\frac{\pi}{2}} \delta(\xi) +  \pv \frac{\widehat{\phi}(\xi)}{i\xi}  + \widehat{\psi}(\xi),
\end{align}
for some even, smooth, compactly supported functions $\phi,\psi$ with $\phi$ having integral $1$.
The 
formula for $\mu^S$ in \eqref{propmuS} with coefficients as in \eqref{propmuScoeff} 
then comes from the first two terms in \eqref{muprop5.1}, that is,
\begin{align}\label{muprop5.5}
\begin{split}
& \ell_{+\infty} \int_{\R} \big( \chi_+^3(x) - \psi(x) \big)
  \overline{K^{S}_o(x,\xi)} K^{S}_{o}(x,\eta) K^{S}_{o}(x,\sigma) \,dx,
\end{split}
\end{align}
by using the expression for $K_o^S(x,\xi)$ in \eqref{Kosplit} 
and \eqref{muprop5.1}, and inserting in addition the cutoff $\varphi^{\ast}$ in front of the $\pv$ term.
We are then left with three contributions from \eqref{muprop5}, besides $\mu_{R,0}$, that is,
\begin{align}\label{mupropR1}
\mu_{R,1}(\xi,\eta,\sigma) & = 
  \int_{\R} (a(x) - \ell_{+\infty}) (\chi_+(x))^3 \overline{K^{S}_o(x,\xi)} K^{S}_{o}(x,\eta) K^{S}_{o}(x,\sigma) \,dx,
  \\
\label{mupropR2}
\mu_{R,2}(\xi,\eta,\sigma) & = 
  \int_{\R} \ell_{+\infty} \psi(x) \overline{K^{S}_o(x,\xi)} K^{S}_{o}(x,\eta) K^{S}_{o}(x,\sigma) \,dx, 
\\
\label{mupropR3}
\mu_{R,3}(\xi,\eta,\sigma) & = \sum_{\lambda, \mu, \nu}
  \bar{a_{\lambda} (\xi)} a_{\mu}(\eta) a_{\nu} (\s) 
  \ell_{+\infty}
  i \big[1-\varphi^{\ast}(p,\eta,\s)\big] \, \frac{\what{\phi}(p)}{p} \, \Big|_{
  p = \lambda \xi - \mu \iota_1 \eta - \nu \iota_2 \sigma}.
\end{align} 
All these can be absorbed in $\mu_R$, as we will explain below.

The remainder $\mu_{R,0}(\xi,\eta,\s)$ in \eqref{muprop5} 
can be written as a linear combination of terms of two types: one type of terms is of the form 

\begin{align}\label{muprop6}
I_{\eps_2 \eps_3} 
  := \int_{\R} a(x) \chi_+(x) \chi_{\epsilon_2}(x)\chi_{\epsilon_3}(x) 
  {\overline{K_{o} (x,\xi)}
   K_{o}(\epsilon_2 x,\eta) K_{o}(\epsilon_3 x, \sigma) } \,dx,
\end{align}

where $\epsilon_2, \epsilon_3 \in \{+,-\}$ with $\eps_2\cdot\eps_3 = -1$,
while the other type of terms have the form 
\begin{align}\label{muprop7}
II_{A,B,C} 
  := \int_{\R_+} a(x) (\chi_+(x))^3 \overline{K^{A}_o(x,\xi)} 
  K^{B}_{o}(x,\eta) K^{C}_{o}(x,\s) \,dx,
\end{align}
where $A,B,C \in \{ S,R \}$ with at least one of them equal to $R$. 

For the terms of the type \eqref{muprop6} we notice that $a\chi_{+}\chi_{\epsilon_2}\chi_{\epsilon_3}$
is compactly supported so that the property \eqref{estmu1}-\eqref{estmu2} with $a=b=c=0$ 
and with $N=0$ follows from \eqref{mupropK1}, 
while for the terms in \eqref{muprop7}, it follows using also \eqref{mupropK2}.
In order to obtain \eqref{estmu2} for non-zero $a,b,c$ and for general $N$, 
it suffices to focus on one specific instance, such as the term
\begin{align}\label{muprop10}
\int a(x) (\chi_+(x))^3 K_o^{S}(x,\xi) K_o^{R}(x,\eta) K_o^{S}(x,\s) \,dx, 
\end{align}
since all other cases can be treated similarly.
From \eqref{Kosplit} we write
\begin{align}\label{Kreg1}
\begin{split}
& \sqrt{2\pi} \, K_o^S(x,\xi) = \frac{\xi}{\jxi} g(\xi) e^{ix\xi} + 2i \sin(x\xi), 
  \qquad g(\xi) :=  \frac{\jxi}{\xi} \big( T(\xi) - 1 - R(\xi) \big),
\\
& \sqrt{2\pi} \, K_o^R(x,\xi) = \frac{\xi}{\jxi} \, \Big[ g(\xi) (m_+(x,\xi)-1)e^{ix\xi} 
  + \int_{-1}^1 \partial_\rho \big[ e^{ix\rho} \langle \rho \rangle (m_+(x,\rho)-1) \big] (x,s\xi) ds \Big].
\end{split}
\end{align}
Observe that the function $g$ is smooth in view of \eqref{estimatesTRodd}.
Plugging the formulas \eqref{Kreg1} into \eqref{muprop10} gives various contributions;
we single out
\begin{align}\label{muprop11}
J(\xi,\eta,\s) := \frac{\xi \cdot \eta \cdot \s}{\jxi \jeta\jsig} 
  \bar{g(\xi)}g(\eta)g(\s) \int_\R (\chi_+(x))^3 e^{ix(-\xi+\eta+\s)} (m_+(x,\eta)-1) \,dx, 
\end{align}
which is representative of all the them; 
indeed, all the other terms from \eqref{muprop10} would either involve the $\sin(x\xi)$ term
(which we bound by $|\xi|\jx/\jxi$) or the integral in \eqref{Kreg1},
but these can be handled similarly using the estimates for 
$m_+$ in \eqref{estimatesm}.
To prove \eqref{estmu1}-\eqref{estmu2} with $a,b,c=0$ we write
$e^{ix(-\xi+\eta+\sigma)} = (i(-\xi+\eta+\sigma))^{-N} \partial_x^N e^{ix(-\xi+\eta+\sigma)}$ 
integrate by parts repeatedly in \eqref{muprop11}, and use \eqref{estimatesm} to get
\begin{align*}
\Big| \int_\R (\chi_+(x))^3 e^{ix(-\xi+\eta+\s)} (m_+(x,\eta)-1) \,dx \Big|
  \lesssim  \langle -\xi+\eta+\sigma \rangle^{-N}.
\end{align*}
To prove \eqref{estmu1}-\eqref{estmu2} for general $a,b,c$ it suffices to apply derivatives
to the integral in \eqref{muprop11}, use the smoothness of $g$, 
and then integrate by parts in $x$ as above, using once again \eqref{estimatesm}.

Finally, observe that the above argument works identically for terms as in \eqref{muprop6};
the same holds true if we replace $a\chi_{+}\chi_{\epsilon_2}\chi_{\epsilon_3}$
with $\psi$ or $(a - \ell_{+\infty})\chi_+^3$, which takes care of the terms \eqref{mupropR1} and \eqref{mupropR2}.

Eventually, we look at \eqref{mupropR3}. 
Note that, from the definition of $\varphi^\ast$ in \eqref{propmustar},
we have $|p| \gtrsim 1/R(\eta,\s)$ on its support.
We observe that
\begin{align*}
\begin{split}
\mu_{R,3}(0,\eta,\sigma) 
  & = \sum_{\lambda, \mu, \nu} \bar{a_{\lambda} (0)} a_{\mu}(\eta) a_{\nu} (\s)
  \ell_{+\infty} i \big[1-\varphi^{\ast}(p_0,\eta,\s)\big] \, \frac{\what{\phi}(p_0)}{p_0} \, \Big|_{
  p_0 = - \mu \iota_1 \eta - \nu \iota_2 \sigma}
  = 0,
\end{split}
\end{align*}
since $\bar{a_{\lambda} (0)}=\lambda$.
Similarly, the expression vanishes at $\eta=0$ or $\s=0$ and in particular
\begin{align*}
\frac{\jxi\jeta\jsig}{\xi \cdot \eta \cdot \s} \mu_{R,3}(\xi,\eta,\sigma) 
\end{align*}
is bounded. This shows \eqref{estmu1}-\eqref{estmu2} for $a=b=c=0$,
using also that $\phi \in \mathcal{S}$.
Finally, we notice that when differentiating \eqref{mupropR3} 
the worst terms are those where the cutoff $\varphi^\ast$ is hit, and since we can bound
\begin{align*}
\big| \partial_\xi^a\partial_\eta^b\partial_\s^c \, \varphi_{> - D}\big(pR(\eta,\s)\big) \big|
  \lesssim |R(\eta,\s)|^{a+b+c}
\end{align*}
the claimed bounds follow.
\end{proof}

\smallskip
\subsubsection*{The even case}
A result similar to Proposition \ref{propmuodd} holds in the case of even functions 
under the assumption that the operator $H$ has an odd resonance: 

\begin{proposition}[The even case]\label{propmueven} 
If $f, g \in \mathcal{S}$ are even functions, $a=a(x)$ is even and satisfies \eqref{ab},
the same statement as that of Proposition \ref{propmuodd} holds true, 
up to modifying the coefficients $a_{\pm}(\xi)$ in \eqref{propmuScoeff} as follows:
\begin{align}\label{propmuevencoeff}
a_+(\xi) = T(\xi) + R(\xi), \qquad \mbox{and} \qquad a_-(\xi) = 1. 
\end{align}

\end{proposition}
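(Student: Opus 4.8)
The plan is to mirror the proof of Proposition \ref{propmuodd} verbatim, replacing the odd kernel $K_o(x,\xi)$ with the even kernel $K_e(x,\xi)$ from \eqref{Keven} and the inversion formula \eqref{invFodd'} with \eqref{invFeve'}. Concretely, I would start from the fact that $a f g$ is even (product of three even functions and an even $a$), so that \eqref{invFeve} gives $\wt{\mathcal{F}}(a f_{\iota_1} g_{\iota_2})(\xi) = \int_{\R_+} \bar{\psi_e(x,\xi)} (af_{\iota_1}g_{\iota_2})(x)\,dx$, and then substitute \eqref{invFeve'} for each factor of $f_{\iota_1}, g_{\iota_2}$ to obtain the analogue of \eqref{propmu1}, namely $\mu^e(\xi,\eta,\s) = \int_\R a(x)\chi_+(x)\bar{K_e(x,\xi)} [\chi_+(x)K_e(x,\eta) + \chi_-(x)K_e(-x,\eta)][\chi_+(x)K_e(x,\s)+\chi_-(x)K_e(-x,\s)]\,dx$ for $\xi>0$, with the even extension to $\xi<0$. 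The only sign change compared with the odd case is the "$+$" instead of "$-$" in front of the $\chi_-$ terms, which is exactly what feeds through into the coefficient change \eqref{propmuevencoeff}.

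Next I would split $K_e$ into singular and regular parts exactly as in \eqref{Kosplit}. Using the form of $K_e$ in \eqref{Keven}, one gets $\sqrt{2\pi}K_e^S(x,\xi) = T(\xi)e^{ix\xi} + e^{-ix\xi} + R(\xi)e^{ix\xi} = \sum_{\lambda} a_\lambda(\xi) e^{i\lambda x\xi}$ with $a_+(\xi) = T(\xi)+R(\xi)$ and $a_-(\xi) = 1$, which is precisely \eqref{propmuevencoeff}; and $\sqrt{2\pi}K_e^R(x,\xi) = (T(\xi)+R(\xi))(m_+(x,\xi)-1)e^{ix\xi} + (m_+(x,-\xi)-1)e^{-ix\xi}$. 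The key cancellation estimates \eqref{mupropK1}--\eqref{mupropK2} hold verbatim for $K_e$: indeed, using $T(0) = -1$ and $R(0) = 0$ (from Proposition \ref{LES} with $a=-1$, as recorded around \eqref{TRKeve}), one has $K_e(x,0) = 0$, and the analogue of the rewriting just after \eqref{Ko+bounds}, $\sqrt{2\pi}K_e(x,\xi) = m_+(x,\xi)2i\sin(x\xi) + [m_+(x,-\xi) - m_+(x,\xi)]e^{-ix\xi} + (T(\xi)+1+R(\xi))m_+(x,\xi)e^{ix\xi}$ — already displayed in the proof of Proposition \ref{proplocdec} — together with Lemma \ref{estimatesTR'} (the $O(\xi)$ behavior of $T(\xi)+1$ and $R(\xi)$), gives the same $\min(|\xi|,1) + \min(|x\xi|,1)$ bound and the same $\jx^{-N+1}\min(|\xi|,1)$ bound on $K_e^R$.

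From there the argument is identical: write $a\chi_+^3 = (a-\ell_{+\infty})\chi_+^3 + \ell_{+\infty}\chi_+^3$, use the Fourier expansion \eqref{muprop5.1} of $\widehat{(\chi_+)^3}$ to extract the $\delta$ and $\pv$ terms (yielding $\mu^{e,S}$ with the new coefficients), and absorb the remainders $\mu_{R,0},\dots,\mu_{R,3}$ into $\mu^{e,R}$ via repeated integration by parts in $x$ using the decay of $m_+(x,\xi)-1$, of $a-\ell_{+\infty}$, and of the compactly supported $\psi$, exactly as in \eqref{muprop6}--\eqref{muprop11}. The vanishing-at-zero remark also goes through: at $\eta=0$ one has $a_+(0) - 1 = T(0)+R(0) - 1 = -2 \neq 0$ in the even case, so the argument in \eqref{muSat0} needs a small adjustment — but in fact $a_\mu(0)$ now satisfies $a_+(0) = -1 = a_-(0) \cdot(-1)$... more precisely $a_+(0)=-1$, $a_-(0)=1$, so $a_\mu(0) = -\mu$, and the same cancellation $\sum_\mu a_\mu(0) [\dots]_{p_0 = \lambda\xi - \nu\s} = 0$ holds because the bracket is independent of $\mu$ when $\eta = 0$. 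The only genuine content to check is this bookkeeping of signs; there is no new analytic difficulty, so the main (mild) obstacle is simply verifying that every sign and cancellation in Proposition \ref{propmuodd} survives the replacement $K_o \rightsquigarrow K_e$, which it does because the defining formulas \eqref{Kodd} and \eqref{Keven} differ only in the signs of the non-transmission terms, precisely matching \eqref{propmuScoeff} versus \eqref{propmuevencoeff}.
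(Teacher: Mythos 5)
Your proposal is correct and follows essentially the same route as the paper: the authors likewise reduce the even case to the odd one by substituting $K_e$ for $K_o$ via \eqref{invFeve}--\eqref{invFeve'}, recording the analogues of \eqref{Kosplit}, \eqref{mupropK1}--\eqref{mupropK2} and \eqref{Kreg1} (with $g(\xi)=\jxi\,\xi^{-1}(T(\xi)+R(\xi)+1)$ smooth by \eqref{estimatesTReven}), and then repeating the argument verbatim. Your sign bookkeeping at zero frequency ($a_\mu(0)=-\mu$ instead of $\mu$, with the same cancellation in the sum over $\mu$) is the right check and is consistent with \eqref{TRKeve}.
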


\begin{proof}
The proof can be obtained very similarly to the proof of Proposition \ref{propmuodd},
starting from the distorted Fourier transform formulas \eqref{invFeve}-\eqref{invFeve'}.
From \eqref{Keven} we can write formulas analogous to \eqref{Kosplit} and \eqref{Kreg1}, namely
\begin{align}\label{Kesplit}
\begin{split}
& K_e(x,\xi) =  K_e^{S} (x,\xi) + K_e^{R} (x,\xi),
\\
& \sqrt{2\pi}K_e^{S}(x,\xi) = T(\xi) e^{ix\xi} + e^{-ix\xi}  + R(\xi) e^{ix\xi} 
  = \sum_{\l\in\{+,-\}} a_\l(\xi)e^{i \l x\xi},
\\
& \sqrt{2\pi}K_e^{R}(x,\xi) = (T(\xi)+R(\xi))(m_+(x,\xi)-1)e^{ix\xi} + (m_+(x,-\xi)-1)e^{-ix\xi},
\end{split}
\end{align}
where $a_\l$ are the coefficients given in \eqref{propmuevencoeff}.
Then using \eqref{TRKeve}, 
\eqref{estimatesTR}, and \eqref{estimatesm}, we have the analogue of \eqref{mupropK1}-\eqref{mupropK2},
that is,
\begin{align}\label{mupropK1eve}
\begin{split}
& |K_e(x,\xi)|  
+ |K_e^{S}(x,\xi)|   \lesssim \min(|\xi|,1) + \min (|x\xi|,1),
\\
& |K_e^{R}(x,\xi)| \lesssim 
  \jx^{-N+1} \min(|\xi|,1),
\end{split}
\end{align}
on the support of $\chi_+(x)$,
and we can write, similarly to \eqref{Kreg1}, 
\begin{align}\label{Kreg1eve}
\begin{split}
& \sqrt{2\pi} \, K_e^S(x,\xi) = \frac{\xi}{\jxi} g(\xi) e^{ix\xi}  - 2i \sin(x\xi) , 
  \qquad g(\xi) :=  \frac{\jxi}{\xi} \big( T(\xi)+ R(\xi) + 1 \big),
\\
& \sqrt{2\pi} \, K_e^R(x,\xi) = \frac{\xi}{\jxi} \, \Big[ g(\xi) (m_+(x,\xi)-1)e^{ix\xi} 
  + \int_{-1}^1 \partial_\rho \big[ e^{ix\rho} \langle \rho \rangle (m_+(x,\rho)-1) \big] (x,s\xi) ds \Big].
\end{split}
\end{align}
Using \eqref{Kesplit}-\eqref{Kreg1eve} 
and noticing that $g$ is smooth in view of \eqref{estimatesTReven}, the proof can then proceed as before.
\end{proof}

\smallskip
\subsubsection*{The generic case} 
In this case we can directly borrow Proposition 4.1. from \cite{KGV1d}, 
which gives the following statement:\footnote{Compared to \cite{KGV1d}, we changed 
slightly the definitions of the singular and regular parts,
in accordance with those in Proposition \ref{propmuodd}
and to simplify the notation in the forthcoming sections.}

\begin{proposition}\label{propmug}
Let $f,g \in \mathcal{S}$ be arbitrary functions, 
with $a=a(x)$ satisfying \eqref{ab} (but no parity assumptions) and assume that the potential $V$ is generic. 
Then there exists a distribution $\mu_{\iota_1\iota_2}$, $\iota_1,\iota_2 \in \{+,-\}$, 
such that
\begin{align}
\label{propmug2}
\widetilde{\mathcal{F}} \big(a \, f_{\iota_1} g_{\iota_2} \big)(\xi) = \iint_{\R^2} 
  (\wt{f})_{\iota_1} (\eta) (\wt{g})_{\iota_2} (\sigma)\,\mu_{\iota_1 \iota_2}(\xi,\eta,\sigma) \, d\eta \,d\sigma,
\end{align}
and such that $\mu^o_{\iota_1\iota_2}$ can be split into a singular and a regular part as follows:
\begin{align*}
(2\pi) \mu_{\iota_1\iota_2} = \mu^{S}_{\iota_1\iota_2} + \mu^{R}_{\iota_1\iota_2},
\end{align*}
where:

\begin{itemize}

\item The singular part $\mu^{S}_{\iota_1\iota_2}$ is given by
\begin{align}\label{propmugS}
\begin{split}
\mu^{S}_{\iota_1 \iota_2}(\xi,\eta,\sigma) & = \sum_{\lambda, \mu, \nu, \epsilon} 
\bar{a_{\lambda}^\epsilon (\xi)} (a^\epsilon_{\mu}(\eta))_{\iota_1} (a^\epsilon_{\nu} (\s))_{\iota_2}
  \\ & \times \ell_{\epsilon \infty} \left[ \sqrt{\frac{\pi}{2} }\delta(p) - \,i \,\epsilon \, \varphi^{\ast}(p,\eta,\s)\, \pv \frac{\what{\phi}(p)}{p} \right]_{p = \lambda \xi - \mu \iota_1 \eta - \nu \iota_2 \sigma}
\end{split}
\end{align}
where the coefficients are given as
\begin{align}\label{propmugcoeff}
\left\{ \begin{array}{l}
\mathbf{a}^-_+(\xi) = \mathbf{1}_+(\xi) + \mathbf{1}_-(\xi) T(-\xi) \\
\mathbf{a}^-_-(\xi) = \mathbf{1}_+(\xi) R_-(\xi)
\end{array} \right. 
\qquad
\left\{ \begin{array}{l}
\mathbf{a}^+_+(\xi) = T(\xi) \mathbf{1}_+(\xi) + \mathbf{1}_-(\xi) \\
\mathbf{a}^+_-(\xi) = \mathbf{1}_-(\xi) R_+(-\xi),
\end{array} \right. 
\end{align}
$\phi \in \mathcal{S}$ is even with integral one, and 
\begin{align}\label{propmugstar}
\varphi^\ast(p,\eta,\sigma) = \varphi_{\leq -D}\big(p R(\eta,\sigma)\big),
\quad R(\eta,\sigma) = \frac{\langle \eta \rangle \langle \sigma \rangle}{\langle \eta \rangle + \langle \sigma \rangle}.
\end{align}
Here $\mathbf{1}_\pm$ is the characteristic function of $\{ \pm \xi \geq 0 \}$.

\medskip
\item The regular part $\mu^{R}_{\iota_1\iota_2}$ 
can be written as a linear combination of terms of the form
\begin{equation}
\label{estmu1g}
\mathbf{1}_{\eps_1}(\xi) \mathbf{1}_{\eps_2}(\eta) \mathbf{1}_{\eps_3} (\sigma)
  \frac{\xi \cdot \eta  \cdot \s}{\jxi\jeta\jsig} 
  \mathfrak{q}_{\substack{\iota_1 \iota_2 \\ \eps_1\eps_2\eps_3}}(\xi,\eta,\sigma)
\end{equation}
where $\eps_1,\eps_2,\eps_3 \in \{+,-\}$, and for all $|a|+|b|+|c| \leq N$
\begin{equation}
\label{estmu2g}
| \partial_\xi^a \partial_\eta^b \partial_\sigma^c 
  \mathfrak{q}_{\substack{\iota_1 \iota_2 \\ \eps_1\eps_2\eps_3}}(\xi,\eta,\sigma) | 
  \lesssim \sup_{\mu,\nu} \frac{1}{\langle \xi +\mu \eta +\nu \sigma \rangle^N} |R(\eta,\s)|^{1+a+b+c}.
\end{equation}

\end{itemize}

\end{proposition}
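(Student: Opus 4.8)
The plan is to reduce the statement to Proposition~4.1 of \cite{KGV1d}: as indicated in the footnote, the only differences are cosmetic, in the way the singular and regular parts are packaged, and the argument is the same. For completeness I would outline it in parallel to the proofs of Propositions~\ref{propmuodd} and \ref{propmueven}, now working with the plain distorted Fourier transform \eqref{distF} and the expansions \eqref{psi>}--\eqref{psi<} in place of their parity-adapted counterparts. First I would use the inversion formula of Proposition~\ref{propFT}(i) to write each of the factors $f_{\iota_1},g_{\iota_2}$ as $\int_\R\psi(x,\eta)\,(\wt f)_{\iota_1}(\eta)\,d\eta$ (complex-conjugating $\psi$ when the index is $-$), so that substituting into \eqref{distF} yields \eqref{propmug2} with
\begin{align*}
\mu_{\iota_1\iota_2}(\xi,\eta,\sigma) = \int_\R a(x)\,\overline{\psi(x,\xi)}\,\psi^{(\iota_1)}(x,\eta)\,\psi^{(\iota_2)}(x,\sigma)\,dx .
\end{align*}
Inserting \eqref{psi>}--\eqref{psi<} and splitting $\sqrt{2\pi}\,\psi$ both according to the cutoffs $\chi_\pm$ and according to $m_\pm = 1+(m_\pm-1)$, I would declare the singular part $\mu^S$ to be the contribution of those terms in which all three factors carry a common cutoff $\chi_\epsilon$ and all $m_\pm$ have been replaced by $1$, and set $\mu^R := 2\pi\mu - \mu^S$.

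For the singular part, the coefficients of $e^{i\lambda x\xi}$ in the $\chi_\epsilon$-piece of $\sqrt{2\pi}\,\psi$, read off directly from \eqref{psi>}--\eqref{psi<}, are precisely the functions $\mathbf a^\epsilon_\lambda$ of \eqref{propmugcoeff}; writing $a\chi_\epsilon^3 = (a-\ell_{\epsilon\infty})\chi_\epsilon^3 + \ell_{\epsilon\infty}\chi_\epsilon^3$ as in the proof of Proposition~\ref{propmuodd} (with $\chi_+^3$ selecting the limit of $a$ at $+\infty$ and $\chi_-^3$ the one at $-\infty$) and applying \eqref{muprop5.1} to $\chi_\epsilon^3$ produces the $\delta$ and $\pv$ terms of \eqref{propmugS}, the orientation of the cutoff accounting for the sign $-i\epsilon$, and one inserts $\varphi^\ast$ in front of the $\pv$ exactly as there. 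For the regular part I would note that $\mu^R$ is a finite linear combination of integrals of three kinds, each with extra spatial localization: (i) the three cutoffs are not all equal, so $\chi_{\epsilon_1}\chi_{\epsilon_2}\chi_{\epsilon_3}$ is compactly supported; (ii) at least one factor is an $m_\pm-1 = O(\jx^{-N})$; (iii) the leftover terms $(a-\ell_{\epsilon\infty})\chi_\epsilon^3$ and the Schwartz tail $\widehat\psi$ from \eqref{muprop5.1}. In each case, integrating by parts in $x$ against $e^{ipx}$ (with $p=\lambda\xi-\mu\iota_1\eta-\nu\iota_2\sigma$) and using \eqref{estimatesm} yields the decay $\langle p\rangle^{-N}$, while the characteristic functions $\mathbf 1_{\epsilon_j}$ appearing in \eqref{estmu1g} reflect the piecewise definition of $\psi$. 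To bring out the prefactor $\tfrac{\xi\cdot\eta\cdot\s}{\jxi\jeta\jsig}$ I would use, in the spirit of \eqref{Kreg1}, that for generic $V$ one has $T(\xi) = \tfrac{\xi}{\jxi}g_T(\xi)$ and $1+R_\pm(\xi) = \tfrac{\xi}{\jxi}g_R(\xi)$ with $g_T,g_R$ smooth and bounded (a consequence of $T(0)=0$, $R_\pm(0)=-1$ from Lemma~\ref{lemVgen}, of Proposition~\ref{LES}, of the smoothness of $T,R_\pm$, and of the unitarity of $S$), together with the Taylor identity $m_\pm(x,\xi)-m_\pm(x,-\xi) = \xi\int_{-1}^1\partial_\rho m_\pm(x,s\xi)\,ds$; the $\sin(x\xi)$-type pieces, which carry no factor of $\xi$ pointwise, acquire one once integrated against a spatially localized kernel, since the resulting $x$-integral is of the form $I(\xi)-I(-\xi)=\xi\int_{-1}^1 I'(s\xi)\,ds$ with $I$ smooth. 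The bounds \eqref{estmu2g} would then follow by differentiating under the integral sign, using the smoothness of $g_T,g_R$ and the estimates \eqref{estimatesm}, and integrating by parts once more in $x$, the factors $|R(\eta,\s)|^{1+a+b+c}$ coming from hitting $\varphi^\ast$ and the prefactors $\tfrac{\eta}{\jeta},\tfrac{\s}{\jsig}$, exactly as at the end of the proof of Proposition~\ref{propmuodd}.

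The step I expect to be the main obstacle — and the reason the details are best left to \cite{KGV1d} — is tracking the three factors $\tfrac{\xi}{\jxi},\tfrac{\eta}{\jeta},\tfrac{\s}{\jsig}$ \emph{simultaneously} in the ``mixed'' terms of $\mu^R$, where only one or two of the three factors carry an $m_\pm-1$. In the resonant case this was transparent because $K_o(x,0)=K_e(x,0)=0$ pointwise in $x$ (see \eqref{TRKodd}, \eqref{TRKeve}); here, however, the vanishing at zero frequency of the $\chi_\pm$-pieces of $\psi$ rests on the cancellation $1+R_\pm(0)=0$, and becomes visible only after pairing each ``$1$'' with its companion ``$R_\pm$'' and invoking the $\sin$-trick above. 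This bookkeeping, which is also what forces the sign characteristic functions $\mathbf 1_{\epsilon_j}$ into \eqref{estmu1g}, is carried out in full in Proposition~4.1 of \cite{KGV1d}.
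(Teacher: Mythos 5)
Your outline is correct and follows essentially the same route as the paper, which proves the analogous odd and even cases (Propositions \ref{propmuodd} and \ref{propmueven}) by exactly this scheme and, for the generic case itself, simply imports Proposition 4.1 of \cite{KGV1d} rather than redoing the computation. In particular you correctly read off the coefficients \eqref{propmugcoeff} from \eqref{psi>}--\eqref{psi<}, identify the sign $-i\epsilon$ with the orientation of $\chi_\epsilon$ in \eqref{muprop5.1}, and locate the genuinely delicate point of the generic case, namely that the vanishing of the kernel at zero frequency comes from the pair of cancellations $T(0)=0$ and $1+R_\pm(0)=0$ rather than from a pointwise identity like \eqref{TRKodd}.
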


\medskip
\begin{rem}[Reduction to the case of odd symmetry]
From now on we will work under the odd symmetry assumption for \eqref{maineq}, that is, we assume the initial data 
is odd, the coefficient $a=a(x)$ in \eqref{maineq} is odd 
(so that the solution stays odd for all times), and the potential is even with (possibly)
an even zero energy resonance.

It is apparent from Propositions \ref{propmuodd} and \ref{propmueven} that the case of even data/solution and
an odd resonance can be dealt with in exactly the same way as the odd case.

As for the generic case, a little more care would be needed to deal with the non-smoothness of the coefficients
\eqref{propmugcoeff} at zero; in particular, after applying the normal form transformation \eqref{profile1}, 
one needs to show that the singularities in the cubic symbols in \eqref{CubicS}-\eqref{cubicSm}
only appear in the arguments of the inputs $(\eta,\s,\theta)$.
This is a technical point that requires some careful algebra, 
but since it was addressed already in Section 6 of \cite{KGV1d} we can skip it here.
A part from this, Proposition \ref{propmug} shows that the generic case can be handled like the odd case as well.
\end{rem}


\smallskip
\section{The main nonlinear decomposition and bootstrap}\label{MND}
In Subsections \ref{ssecprof}-\ref{SsecRenoeq} we summarize several manipulations 
which lead to a renormalized form of the equation (see \eqref{Renodtf}) over which the main estimates are performed. 
Some details are omitted, for which we refer the reader to Section 5 of \cite{KGV1d}.
In Subsection \ref{BABAPB} we state our main bootstrap proposition which will imply 
the global bounds of Theorem \ref{mainthm}.


\smallskip
\subsection{The equation on the profile}\label{ssecprof}
Consider $u = u(t,x)$ a solution of the quadratic Klein-Gordon equation
\begin{align}
\label{KGu}
\begin{split}
& \partial_{t}^2 u + (-\partial_x^2 + V + 1) u = a(x)u^2,  \qquad (u,u_t)(t=0) = (u_0,u_1).
\end{split}
\end{align}
Note that we are disregarding the cubic terms from \eqref{maineq} since they are lower order;
moreover, cubic terms that are more complicated than $u^3$,
will appear after normal forms, and will be treated in detail in what follows.

In order to make the equation first order in time, we first define 
\begin{align}\label{v}
v= \big(\partial_t - i B\big)u, \qquad B = \sqrt{-\partial_x^2 +V + 1},
\end{align}
which solves
$$
\big(\partial_t + i B \big)v = a(x)u^2 \quad 
  \mbox{or} \quad \big(\partial_{t} + i \langle \xi \rangle \big)\wt{v} = \wt{\mathcal{F}}(a(x)u^2)
$$
Next, we filter by the linear evolution to obtain the profile 
\begin{equation}\label{profile0}
g(t,\cdot) = e^{itB}v(t,\cdot) 
\end{equation}
which solves
$$ 
\partial_t \wt{g}(t,\xi) = e^{it\langle \xi \rangle} \wt{\mathcal{F}}(a(x)u^2).
$$
Using the quadratic spectral distribution described in Proposition \ref{propmuodd} 
we write this explicitly as
\begin{align}\label{dtg}
\begin{split}
\partial_t \wt{g}(t,\xi) & = - \sum_{\iota_1,\iota_2}  \iota_1\iota_2
\iint e^{it \Phi_{\iota_1\iota_2}(\xi,\eta,\sigma)} \wt{g}_{\iota_1}(t,\eta) \wt{g}_{\iota_2}(t,\sigma) 
\, \frac{1}{4\jeta \jsig} \mu_{\iota_1\iota_2}^o(\xi,\eta,\sigma) \, d\eta \, d\sigma.
\\
& \Phi_{\iota_1 \iota_2}(\xi,\eta,\sigma) = \jxi - \iota_1\jeta - \iota_2 \jsig.
\end{split}
\end{align}
For convenience we are omitting  the limits of integration $\eta,\s>0$
and, from now on, we will also omit the apex $o$ from $\mu_{\iota_1\iota_2}^o$ and similar expression
such as $\mu^{o,S}_{\iota_1\iota_2}$.

\subsection{Normal form transformation and renormalized profile}\label{secNF}
Some simple calculations show that $\Phi_{\iota_1 \iota_2}$ 
does not vanish on the support of the distribution $\mu^S_{\iota_1 \iota_2}$ defined in \eqref{propmuS} and \eqref{propmustar}. 
In other words, the corresponding interaction is not resonant, 
and we can define the natural normal form transformation $T_{\iota_1 \iota_2}(g,g)$ by
\begin{align}\label{defTiota}
\begin{split}
\wt{\mathcal{F}} T_{\iota_1, \iota_2}(g,g)(t) &:= \iint e^{it \Phi_{\iota_1 \iota_2}(\xi,\eta,\sigma)}
  \widetilde{g}(t,\eta) \widetilde{g}(t,\sigma) \mathfrak{m}_{\iota_1 \iota_2}(\xi,\eta,\sigma)\,d\eta \,d\sigma 
\\
& \mathfrak{m}_{\iota_1 \iota_2}(\xi,\eta,\s) 
  = -\frac{\iota_1\iota_2}{\jeta\jsig} \frac{\mu^{S}_{ \iota_1 \iota_2 }(\xi,\eta,\sigma)}{i \Phi_{\iota_1 \iota_2}(\xi,\eta,\sigma)}.
\end{split}
\end{align}
The full normal form transformation is given by $\sum_{\iota_1 \iota_2\in\{+,-\}} T_{\iota_1 \iota_2}$,
and accordingly we define a {\it re-normalized profile}
\begin{align}\label{profile1}
f := g - T(g,g), \qquad T = \sum_{\iota_1,\iota_2\in\{+,-\}} T_{\iota_1 \iota_2}.
\end{align}
It is not hard to check that, under our assumptions, $f$ is odd and thus $\wt{f}(t,0) = 0$ as well.

Let us write $\partial_t \wt{g} = \mathcal{Q}^R(g,g) + \mathcal{Q}^S(g,g)$ where $\mathcal{Q}^\ast$ denotes the bilinear expression of the same form of \eqref{dtg}
with $\mu^o$ replaced by $(2\pi)^{-1}\mu^\ast$, for $\ast = S$ or $R$.
Then, from \eqref{defTiota}-\eqref{profile1} we see that
$\partial_t \wt{f} = \mathcal{Q}^R(g,g) + \wt{T}\big(\partial_tg,g) + \wt{T}\big(g,\partial_tg)$.
This last equation can be rewritten as
\begin{align}\label{Renodtf0}
\begin{split}
\partial_t \wt{f} = \mathcal{Q}^R(g,g) + \mathcal{C}^{S}(g,g,g) + \mathcal{C}^R(g,g,g)
\end{split}
\end{align}
where the terms on the right-hand side are given below:

\medskip
\noindent
\setlength{\leftmargini}{1.5em}
\begin{itemize}

\item \textit{The regular quadratic term} is  
 \begin{align}\label{QR}
\begin{split}
& \mathcal{Q}^R(a,b) = \sum_{\iota_1,\iota_2\in\{+,-\}} \mathcal{Q}_{\iota_1\iota_2}^R(a,b),
\\
& \mathcal{Q}_{\iota_1\iota_2}^R[a,b](t,\xi)
  = \iint e^{it \Phi_{\iota_1\iota_2}(\xi,\eta,\sigma)} \, \mathfrak{q}(\xi,\eta,\sigma) 
  \, \wt{a}_{\iota_1}(t,\eta) \wt{b}_{\iota_2}(t,\sigma) \, d\eta \, d\sigma,
\end{split}
\end{align}
with $\mathfrak{q}$ satisfying, see \eqref{estmu1}-\eqref{estmu2} and \eqref{propmustar},
\begin{align}\label{qsymbol} 
\mathfrak{q}(\xi,\eta,\s) = \mathfrak{q}'(\xi,\eta,\s) \cdot \frac{\eta}{\jeta} \frac{\s}{\jsig} 
  \cdot \frac{1}{\jeta+\jsig}
\end{align}
where $\mathfrak{q}'$ is smooth with
\begin{align}\label{q'symbol}
|\partial_\xi^a \partial_\eta^b \partial_\s^c\mathfrak{q'}(\xi,\eta,\s)| \lesssim \langle \xi-\eta-\sigma \rangle^{-N}
  \min(\jeta, \jsig)^{a+b+c}.
\end{align}
Note that the actual bound above should have the factor $(\inf_{\mu,\nu}  \langle \xi- \mu \eta- \nu \sigma \rangle)^{-N}$
but we disregard the signs $\mu,\nu$ since they will play no relevant role in our estimates.


\smallskip
\item \textit{The singular cubic term} is
\begin{align}
\label{CubicS}
\begin{split}
& \mathcal{C}^{S}(a,b,c) = \sum_{\kappa_1, \kappa_2, \kappa_3 \in\{+,-\}} 
  \mathcal{C}^{S}_{\kappa_1 \kappa_2 \kappa_3}(a,b,c) 
\\
& \mathcal{C}^{S}_{\kappa_1 \kappa_2 \kappa_3}(a,b,c)(t,\xi)
= \iiint e^{it \Phi_{\kappa_1 \kappa_2 \kappa_3}(\xi,\eta,\sigma,\theta)} 
  \mathfrak{c}^S_{\kappa_1 \kappa_2 \kappa_3}(\xi,\eta,\sigma,\theta) 
  \,  \wt{a}_{\k_1}(t,\eta) \wt{b}_{\k_2}(t,\sigma) \wt{c}_{\k_3}(t,\theta) \, d\eta \, d\sigma \,d\theta,
\\
& \Phi_{\kappa_1 \kappa_2 \kappa_3}(\xi,\eta,\sigma,\theta) := \jxi - \kappa_1\jeta - \kappa_2\jsig 
  - \kappa_3\langle \theta \rangle,
\end{split}
\end{align}
where the symbol $\mathfrak{c}^S_{\kappa_1 \kappa_2 \kappa_3}$ can be written as a sum of terms of the type
\begin{align}\label{cubicSm}
\mathfrak{s}(\xi,\eta,\sigma,\theta) \delta(p) 
  \qquad \mbox{and} \qquad \mathfrak{m}(\xi,\eta,\sigma,\theta) \frac{\widehat{\phi}(p)}{p}, 
\qquad
p := \lambda \xi - \mu \eta - \nu \sigma - \rho \theta,
\end{align}
where we write $\mathfrak{s}$ and $\mathfrak{m}$ for symbols which are globally Lipschitz,
and smooth as long as $\xi,\eta,\sigma,\theta$ do not vanish. 
Furthermore, they satisfy  
\begin{equation}\label{cubicSm'}
\begin{split}
& \left|\mathfrak{m}(\xi,\eta,\sigma,\theta) \right| + \left| \mathfrak{s}(\xi,\eta,\sigma,\theta) \right| 
  \lesssim \frac{1}{\langle \eta \rangle \langle \sigma \rangle \langle \theta \rangle} ,  
\\
& \text{$\mathfrak{m}(\xi,\eta,\sigma,\theta) = 0 $ \,\, when \, $\eta \cdot \sigma \cdot \theta = 0$.}  
\end{split}
\end{equation}
Precise bounds on derivatives of $\mathfrak{m}$ and $\mathfrak{s}$ are slightly more complicated to state.
However, all the contributions coming from differentiating these symbols (as in Section \ref{secCS} for example)
are lower order; in fact, the worst loss that may occur when differentiating them
is essentially a factor of the form $\max(\jxi,\jeta,\jsig,\langle\theta\rangle)$ which is easy to handle.
We refer to Subsection 5.5 in \cite{KGV1d} where exact formulas can be found.

\smallskip
\item \textit{The regular cubic term} is given by
\begin{align}
\label{CubicR}
\begin{split}
& \mathcal{C}^{R}(a,b,c) = \sum_{\kappa_1 \kappa_2 \kappa_3} \mathcal{C}^{R}_{\kappa_1 \kappa_2 \kappa_3}(a,b,c)
\\
& \mathcal{C}^{R}_{\kappa_1 \kappa_2 \kappa_3}(a,b,c)(t,\xi)
= \iiint e^{it \Phi_{\kappa_1 \kappa_2 \kappa_3}(\xi,\eta,\sigma,\theta)}
  \mathfrak{c}^R_{\kappa_1 \kappa_2 \kappa_3}(\xi,\eta,\sigma,\theta) 
  \, \wt{a}_{\iota_1}(t,\eta) \wt{b}_{\iota_2}(t,\sigma)  
  \wt{c}_{\iota_3}(t,\theta) \, d\eta \, d\sigma \,d\theta,
\end{split}
\end{align}
where the symbol $\mathfrak{c}^R_{\kappa_1 \kappa_2 \kappa_3}$ can be written as a sum of symbols 
$\mathfrak{m}$ satisfying
\begin{align}\label{CubicRm}
|\mathfrak{m}(\xi,\eta,\sigma,\theta)| \lesssim \frac{1}{\langle \eta \rangle \langle \sigma \rangle \langle \theta \rangle}.
\end{align}
This regular term is  easier to treat than the two previous ones. 
In particular, it satisfies better trilinear bounds than $\mathcal{C}^S$ does. 
Therefore, we will only briefly mention how to treat it in the rest of the proof.


\end{itemize}

\smallskip
We now reproduce Lemmas 6.11 and 6.13 from \cite{KGV1d} for later reference. 
The statements of these lemmas involve the wave operator associated to $H$, which is denoted
\begin{align}\label{Wopdef}
\mathcal{W} = \widetilde{\mathcal{F}}^{-1} \widehat{\mathcal{F}},
\qquad \mathcal{W}^\ast =  \widehat{\mathcal{F}}^{-1}\widetilde{\mathcal{F}}.
\end{align}

\begin{lem} \label{lemQR}
For any $p_1,p_2\in[2,\infty)$ such that $\frac{1}{p_1}+\frac{1}{p_2} < \frac{1}{2}$,
and $\iota_1,\iota_2 \in \{+,-\}$,
\begin{align}\label{lemQRpq}
\begin{split}
{\big\| 
  \mathcal{Q}^R_{\iota_1\iota_2}(f_1,f_2)(t,\xi) \big\|}_{L^2_\xi} 
  \lesssim \min\big( & {\| \langle \partial_x \rangle^{-1+} 
  e^{-i\iota_1t\langle \partial_x \rangle} \mathcal{W}^*f_1 \|}_{L^{p_1}} 
  {\| 
  e^{-i\iota_2t\langle \partial_x \rangle} \mathcal{W}^*f_2 \|}_{L^{p_2}},
  \\ & {\| 
  e^{-i\iota_1t\langle \partial_x \rangle} \mathcal{W}^*f_1 \|}_{L^{p_1}} 
  {\| \langle \partial_x \rangle^{-1+} 
  e^{-i\iota_2t\langle \partial_x \rangle} \mathcal{W}^*f_2 \|}_{L^{p_2}} \big).
\end{split}
\end{align}
\end{lem}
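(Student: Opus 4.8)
The plan is to reduce the bound \eqref{lemQRpq} on $\mathcal{Q}^R_{\iota_1\iota_2}$ to a bilinear estimate on the flat side, where the oscillatory factor $e^{it\Phi_{\iota_1\iota_2}}$ becomes a product of half-Klein-Gordon propagators and the symbol $\mathfrak{q}$ — thanks to its structure \eqref{qsymbol}-\eqref{q'symbol} — can be absorbed using a Coifman-Meyer type multiplier theorem. First I would use Plancherel for $\wtF$ to write $\|\mathcal{Q}^R_{\iota_1\iota_2}(f_1,f_2)(t)\|_{L^2_\xi}$ as the $L^2_x$ norm of $\wtF^{-1}$ applied to the bilinear expression; then I would insert $\mathcal{W}^\ast$ to pass to the flat Fourier transform, writing $\wt{f}_j = \what{\mathcal{W}^\ast f_j}$ and noting that $e^{-i\iota_j t\jeta}\wt{f}_j(\eta)$ is the flat Fourier transform of $e^{-i\iota_j t\langle\partial_x\rangle}\mathcal{W}^\ast f_j$. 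This recasts $\mathcal{Q}^R_{\iota_1\iota_2}$ (up to the harmless $e^{it\jxi}$ phase, which is an $L^2$ isometry on the flat side after applying $\mathcal{W}$, or can simply be discarded after another Plancherel) as a flat bilinear Fourier multiplier operator $B_{\mathfrak{q}}(h_1,h_2)$ with $h_j = e^{-i\iota_j t\langle\partial_x\rangle}\mathcal{W}^\ast f_j$ and symbol $\mathfrak{q}(\xi,\eta,\sigma)\big|_{\xi=\eta+\sigma}$ — here one uses that $\wtF\wt{\mathcal{F}}^{-1}$ composed with $\what{\mathcal{F}}$ identities reduce everything to $\what{\mathcal{F}}$, $\wtF$ being unitary.

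Next I would exploit the factorization \eqref{qsymbol}: write $\mathfrak{q} = \mathfrak{q}'(\xi,\eta,\sigma)\cdot\frac{\eta}{\jeta}\cdot\frac{\sigma}{\jsig}\cdot\frac{1}{\jeta+\jsig}$. The factor $\frac{1}{\jeta+\jsig}$ I would bound pointwise by $\min(\jeta,\jsig)^{-1}$, which is where the asymmetric $\langle\partial_x\rangle^{-1+}$ in \eqref{lemQRpq} comes from: bounding $\frac{1}{\jeta+\jsig} \lesssim \jeta^{-1+}\jsig^{-0-}$ (or symmetrically) lets me peel off a $\langle\partial_x\rangle^{-1+}$ acting on $h_1$ (resp. $h_2$) while leaving a symbol that is, together with $\mathfrak{q}'\cdot\frac{\eta}{\jeta}\cdot\frac{\sigma}{\jsig}\cdot\jsig^{0-}$, a Coifman-Meyer symbol: by \eqref{q'symbol} it satisfies $|\partial_\xi^a\partial_\eta^b\partial_\sigma^c(\cdots)| \lesssim (|\eta|+|\sigma|)^{-a-b-c}$ uniformly (the $\langle\xi-\eta-\sigma\rangle^{-N}$ and the $\min(\jeta,\jsig)^{a+b+c}$ gain from differentiating $\mathfrak{q}'$ are more than enough, and the low-frequency factors $\eta/\jeta$, $\sigma/\jsig$ only help). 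Then the Coifman-Meyer theorem gives $\|B_{\text{CM}}(g_1,g_2)\|_{L^2} \lesssim \|g_1\|_{L^{p_1}}\|g_2\|_{L^{p_2}}$ for $\frac1{p_1}+\frac1{p_2}\le\frac12$, which is precisely the claimed bound after reinstating $\langle\partial_x\rangle^{-1+}$ on the appropriate factor; taking the minimum over the two ways of distributing the $\frac{1}{\jeta+\jsig}$ factor yields the stated $\min(\cdot,\cdot)$.

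The main obstacle I anticipate is verifying that the reduced symbol genuinely satisfies Coifman-Meyer (Marcinkiewicz-Hörmander) bounds uniformly in $t$ and in the frequency scales — in particular, that differentiating the various $\langle\cdot\rangle$-ratios and the localization factors does not destroy the $(|\eta|+|\sigma|)^{-(a+b+c)}$ homogeneity near the origin, and that the $\langle\xi-\eta-\sigma\rangle^{-N}$ decay away from the diagonal (needed since Coifman-Meyer symbols are only required to be smooth away from $\eta=\sigma=0$, not globally) is compatible with restricting to $\xi=\eta+\sigma$ or, if one keeps the full oscillation, is handled by an additional paraproduct/kernel estimate. A secondary point is checking that the wave operator $\mathcal{W}$ and its adjoint are bounded on $L^2$ (true since $\wtF$ is unitary, Proposition \ref{propFT}(i)) and that the free propagator $e^{-i\iota_j t\langle\partial_x\rangle}$ commutes past everything; these are routine. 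Since this lemma is quoted verbatim as Lemma 6.11 of \cite{KGV1d}, I would present only the reduction and cite the multiplier-theorem verification there rather than reproving it.
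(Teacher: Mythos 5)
The paper itself gives no proof of this lemma: it is reproduced verbatim from \cite{KGV1d} (Lemma~6.11 there), so the only comparison available is with that reference. Your reduction step is correct and is the standard one: since $\wt{f_j}=\whatF\,\mathcal{W}^\ast f_j$, flat Plancherel turns $\|\mathcal{Q}^R_{\iota_1\iota_2}(f_1,f_2)\|_{L^2_\xi}$ into the $L^2$ norm of a flat bilinear pseudo-product with symbol $\mathfrak{q}(\xi,\eta,\sigma)$ acting on $h_j=e^{-i\iota_j t\langle\partial_x\rangle}\mathcal{W}^\ast f_j$, the outer phase $e^{it\jxi}$ being harmless, and the factor $\frac{1}{\jeta+\jsig}\lesssim\jeta^{-1+}\jsig^{0-}$ is indeed where the asymmetric $\langle\partial_x\rangle^{-1+}$ comes from.

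The multiplier step, however, has two genuine gaps. First, the symbol is not of Coifman--Meyer type: by \eqref{q'symbol} each derivative of $\mathfrak{q}'$ costs a factor $\min(\jeta,\jsig)^{a+b+c}$, which is a \emph{loss} at high frequencies, not the ``gain'' you describe, and is incompatible with the required $(|\eta|+|\sigma|)^{-(a+b+c)}$ decay. (The growth is intrinsic; it comes from differentiating the cutoff $\varphi^{\ast}(p,\eta,\sigma)=\varphi_{\leq-D}(pR(\eta,\sigma))$.) The appropriate tool is a box-localized criterion as in Lemma~\ref{lemBm}, or equivalently integrability of the inverse Fourier transform of the symbol, with the derivative loss absorbed by the $\langle\xi-\eta-\sigma\rangle^{-N}$ factor after a dyadic decomposition. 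Second, and more fundamentally, no translation-invariant bilinear multiplier theorem can produce an $L^2$ output from $L^{p_1}\times L^{p_2}$ when $\frac{1}{p_1}+\frac{1}{p_2}<\frac12$: H\"older scaling forces the output into $L^r$ with $\frac1r=\frac1{p_1}+\frac1{p_2}$, i.e.\ $r>2$, and the estimate \eqref{lemQRpq} is simply false for a pure product $h_1h_2$. The passage from $L^r$ to $L^2$ is exactly what the off-diagonal decay buys: writing $\mathfrak{q}(\xi,\eta,\sigma)=\int e^{-ix(\xi-\eta-\sigma)}Q(x,\eta,\sigma)\,dx$ with $Q$ rapidly decaying in $x$ (the physical-space localization of the regular part of the spectral distribution), the output is a superposition of bilinear expressions weighted by $\langle x\rangle^{-N}$, and $\|\langle x\rangle^{-N}F\|_{L^2}\lesssim\|F\|_{L^r}$ precisely when $r>2$ --- which is why the hypothesis is the \emph{strict} inequality. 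You treat this decay as a technical nuisance to be ``handled by an additional paraproduct/kernel estimate,'' but it is the mechanism that makes the stated $L^2$ bound possible; any write-up must put it at the center of the argument rather than defer it.
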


\begin{lem} \label{lemCS}
For any $\kappa_1, \kappa_2, \kappa_3\in \{+,-\}$, for all $p,p_1,p_2,p_3 \in (1,\infty)$ 
with $\frac{1}{p_1} + \frac{1}{p_2} + \frac{1}{p_3} = \frac{1}{p}$, 
\begin{align}\label{lemCS1}
\begin{split}
& {\|e^{-it\langle \partial_x \rangle}\widehat{\mathcal{F}}^{-1}\mathcal{C}^{S}_{\kappa_1 \kappa_2 \kappa_3}(a,b,c)\|}_{L^p}
 \\
 & \qquad \lesssim
 {\| \langle \partial_x \rangle^{-1+ } e^{-\kappa_1it \langle \partial_x \rangle} \mathcal{W}^* a \|}_{L^{p_1}}
 {\| \langle \partial_x \rangle^{-1+} e^{-\kappa_2it \langle \partial_x \rangle} \mathcal{W}^* b \|}_{L^{p_2}}
 {\| \langle \partial_x \rangle^{-1+} e^{-\kappa_3it\langle \partial_x \rangle} \mathcal{W}^* c \|}_{L^{p_3}}.
\end{split}
\end{align}
Furthermore, if $p_1 = p_2 = \infty$, and $a,b$ are functions that satisfy the
bounds as in the assumptions \eqref{bootstrap0} of Proposition \ref{propbootstrap} below,
then, for all $t\in[0,T]$ and 
$p\in(1,\infty)$, we have
\begin{align}\label{lemCSend}
& {\big\|e^{-it\langle \partial_x \rangle}
  \widehat{\mathcal{F}}^{-1}\mathcal{C}^{S}_{\kappa_1 \kappa_2 \kappa_3}[a , b, c] \big\|}_{L^p} 
  \lesssim \frac{\e_1^2}{t} 
  {\big\| \langle \partial_x \rangle^{-1+} e^{-it\langle \partial_x \rangle} \mathcal{W}^* c \big\|}_{L^{p}}.
\end{align}
\end{lem}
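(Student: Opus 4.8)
The plan is to prove Lemma~\ref{lemCS} in two stages, corresponding to the two displayed inequalities \eqref{lemCS1} and \eqref{lemCSend}. For \eqref{lemCS1}, the strategy is to pass to the flat Fourier side, where the conjugated operator $e^{-it\langle\partial_x\rangle}\widehat{\mathcal{F}}^{-1}\mathcal{C}^S_{\kappa_1\kappa_2\kappa_3}(a,b,c)$ becomes a trilinear pseudo-product with a symbol built from $\mathfrak{s},\mathfrak{m}$ and the phase, acting on the three ``linearly evolved'' inputs $e^{-\kappa_j it\langle\partial_x\rangle}\mathcal{W}^\ast(\cdot)$. First I would recall from \eqref{cubicSm}--\eqref{cubicSm'} that $\mathfrak{c}^S$ is a sum of a $\delta(p)$-type term and a $\mathrm{p.v.}$-type term $\mathfrak{m}\,\widehat{\phi}(p)/p$, with symbols bounded by $\langle\eta\rangle^{-1}\langle\sigma\rangle^{-1}\langle\theta\rangle^{-1}$ and with controlled derivatives (the loss being at worst $\max(\jxi,\jeta,\jsig,\langle\theta\rangle)$, which after the $\langle\partial_x\rangle^{-1+}$ smoothing on each factor is harmless). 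The $\langle\eta\rangle^{-1}$, $\langle\sigma\rangle^{-1}$, $\langle\theta\rangle^{-1}$ decay is precisely what furnishes the three factors of $\langle\partial_x\rangle^{-1+}$ on the right-hand side. Then one writes the $\delta(p)$ and $\mathrm{p.v.}$ factors as convolution kernels in physical space: $\delta(\lambda\xi-\mu\eta-\nu\sigma-\rho\theta)$ and $\widehat{\phi}(p)/p$ both correspond to bounded (for the p.v., $L^1$ after using that $\phi\in\mathcal{S}$) convolution operators, so the whole expression reduces to a product of three functions in physical space integrated against a bounded kernel; Hölder with exponents $\frac1{p_1}+\frac1{p_2}+\frac1{p_3}=\frac1p$ then closes it. This is essentially the content of Lemma~6.13 of \cite{KGV1d}, so I would cite that and only indicate the (minor) bookkeeping changes coming from the slightly altered normalization of $\mathfrak{c}^S$ noted in the footnote before Proposition~\ref{propmug}.

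For the improved estimate \eqref{lemCSend}, the plan is to exploit the extra structure when $p_1=p_2=\infty$ and $a,b$ satisfy the bootstrap hypotheses \eqref{bootstrap0}. The key point is the vanishing $\mathfrak{m}(\xi,\eta,\sigma,\theta)=0$ when $\eta\sigma\theta=0$ from \eqref{cubicSm'}, together with the dispersive decay $\|\langle\partial_x\rangle^{-1+}e^{-\kappa it\langle\partial_x\rangle}\mathcal{W}^\ast a\|_{L^\infty}\lesssim \e_1\langle t\rangle^{-1/2}$ that the bootstrap assumptions provide for $a$ and $b$ (via Proposition~\ref{propdecay0} and the definition of $\mathcal{W}$). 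Naively applying \eqref{lemCS1} with $p_1=p_2=\infty$ would give $\e_1^2\langle t\rangle^{-1}\|\cdots c\|_{L^p}$, which is exactly the claimed bound — so in fact \eqref{lemCSend} should follow almost immediately from \eqref{lemCS1} once one checks that the right-hand norms for $a$ and $b$ are $\lesssim\e_1 t^{-1/2}$ under \eqref{bootstrap0}. I would therefore set up \eqref{lemCSend} as a direct corollary: apply \eqref{lemCS1} with the triple $(\infty,\infty,p)$, then insert the decay bounds for the first two factors.

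The main obstacle — and the reason this cannot be a pure one-line citation — is controlling the derivative losses in the symbols $\mathfrak{s}$ and $\mathfrak{m}$ when one wants more than an $L^2\times L^\infty$-type bound, and more importantly making sure the $\delta$ and $\mathrm{p.v.}$ distributions in \eqref{cubicSm} are genuinely bounded operators on the relevant $L^p$ spaces with constants independent of $\xi$ and of the other frozen variables. For the $\delta(p)$ term with $p=\lambda\xi-\mu\eta-\nu\sigma-\rho\theta$ one is really restricting to a hyperplane, which on the physical side is a product of three functions evaluated along a common line; the boundedness then hinges on the $\langle\eta\rangle^{-1}\langle\sigma\rangle^{-1}\langle\theta\rangle^{-1}$ decay absorbing the singularity. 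For the $\mathrm{p.v.}\,\widehat{\phi}(p)/p$ term one must note $\widehat{\phi}$ is Schwartz so $\widehat{\phi}(p)/p$ has integrable inverse Fourier transform, reducing it to a bounded convolution; this is where I expect the bulk of the (still routine) work to lie. I would handle this by reducing, exactly as in \cite{KGV1d}, to the model trilinear operators and quoting their $L^{p_1}\times L^{p_2}\times L^{p_3}\to L^p$ boundedness, flagging that the symbol-derivative bounds needed are the ones summarized after \eqref{cubicSm'} and in Subsection~5.5 of \cite{KGV1d}.
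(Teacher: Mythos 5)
First, note that the paper itself offers no proof of this lemma: it is explicitly ``reproduced'' from \cite{KGV1d} (Lemma 6.13 there) for later reference, so the only honest comparison is with that cited proof. Your sketch of \eqref{lemCS1} --- passing to the flat Fourier side via $\mathcal{W}^\ast$, splitting the kernel into the $\delta(p)$ and $\mathrm{p.v.}$ pieces, realizing them as physical-space factors, and running H\"older with the pseudo-product generated by $\mathfrak{m}$ --- is indeed the route taken in \cite{KGV1d}, and correctly identifies where the three factors of $\langle\partial_x\rangle^{-1+}$ come from. One factual slip there: the inverse Fourier transform of $\mathrm{p.v.}\,\widehat{\phi}(p)/p$ is (a constant times) $\mathrm{sgn}\ast\phi$, which is \emph{bounded} but tends to nonzero constants at $\pm\infty$ and hence is not integrable; boundedness is what the H\"older argument actually needs, so the conclusion survives, but the stated reason does not.

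The genuine gap is in your treatment of \eqref{lemCSend}. You propose to obtain it ``as a direct corollary'' by applying \eqref{lemCS1} with $(p_1,p_2,p_3)=(\infty,\infty,p)$ and inserting the $\langle t\rangle^{-1/2}$ decay of the two $L^\infty$ factors. But \eqref{lemCS1} is asserted only for $p_1,p_2,p_3\in(1,\infty)$, and this restriction is not cosmetic: the operator is a genuine four-variable pseudo-product with a symbol $\mathfrak{m}(\xi,\eta,\sigma,\theta)$ that is merely Lipschitz (singular where a frequency vanishes, with derivative losses of order $\max(\jxi,\jeta,\jsig,\langle\theta\rangle)$), and Coifman--Meyer-type bounds for such multipliers degenerate at $L^\infty$ input endpoints. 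This is precisely why the ``Furthermore'' clause does not merely assume $\|\langle\partial_x\rangle^{-1+}e^{-it\langle\partial_x\rangle}\mathcal{W}^\ast a\|_{L^\infty}\lesssim \e_1 t^{-1/2}$ but requires the \emph{full} bootstrap hypotheses \eqref{bootstrap0} on $a$ and $b$ (including the weighted $L^2$ control of $\partial_\xi\wt{a}$ and the $\jxi^{3/2}L^\infty_\xi$ control of $\wt{a}$): the proof in \cite{KGV1d} uses this profile-level information to handle the singular kernel near $p=0$ (splitting near/away from the singularity, integrating by parts or using Cauchy--Schwarz against $\partial_\xi\wt a$), and only then extracts the $\e_1^2/t$ factor from the dispersive decay. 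If the endpoint case of \eqref{lemCS1} were available as a black box, those extra hypotheses would be superfluous. As written, your derivation of \eqref{lemCSend} assumes an estimate that is neither stated nor, in general, true, and this step needs to be replaced by the finer argument.
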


The estimates of Lemma \ref{lemCS} above hold true also for the $\delta$ and $\pv$
components of $\mathcal{C}^S$ separately, see \eqref{CubicS}-\eqref{cubicSm}.
Moreover, one can also add derivatives to the estimate \eqref{lemCS1} with a natural statement
consistent with product estimates in Sobolev spaces;
in particular we can replace the $L^p$ norm on the left-hand side of \eqref{lemCS1} by the $W^{0+,p}$ norm.

\smallskip
\subsection{The equation for the renormalized profile}\label{SsecRenoeq} 

Using the identity $g=f+T(g,g)$ (see \eqref{profile1}), we can write the equation \eqref{Renodtf0} as follows
\begin{align}\label{Renodtf}
\partial_t \wt{f} = \mathcal{Q}^R(f,f) + \mathcal{C}(f,f,f) + \mathcal{R}(f,g),
  \qquad \mathcal{C} := \mathcal{C}^{S} + \mathcal{C}^R,
\end{align}
where the remainder term $\mathcal{R}$ is given by
\begin{align}
\label{R}
\mathcal{R}(f,g) & = \mathcal{R}_Q(f,g) + \mathcal{R}_C(f,g),
\\
\label{RQ}
\mathcal{R}_Q(f,g) & = \mathcal{Q}^R(f,T(g,g)) + \mathcal{Q}^R(T(g,g),g),
\\
\label{RC}
\mathcal{R}_C(f,g) & = \mathcal{C}(T(g,g),g,g) + \mathcal{C}(f,T(g,g),g) + \mathcal{C}(f,f,T(g,g)).
\end{align}


All the terms above are simpler to estimate than the other quadratic or cubic terms in \eqref{Renodtf},
and can be treated similarly or with more straightforward arguments;
we will explain on how to handle them in Section \ref{secred}
and will mostly concentrate on $\mathcal{Q}^R(f,f)$ and  $\mathcal{C}^S(f,f,f)$ in what follows.



\medskip
\subsection{Bootstrap and basic a priori bounds}\label{BABAPB}

Fix two constants $\alpha$ and $p_0$ such that $0<p_0 \ll \alpha \ll 1$,
and recall the smallness assumption in Theorem \ref{mainthm}.
Our main bootstrap proposition, that will immediately imply the main theorem, is the following:

\begin{prop}\label{propbootstrap}
Assuming that
\begin{align}\label{bootstrap0}
\begin{split}
& \sup_{t \in [0,T]} \left[ \jt^{-p_0} {\| \jxi^4 \wt{f}(t) \|}_{L^2} 
  + \jt^{-\alpha} {\| \jxi \partial_\xi \wt{f}(t) \|}_{L^2} 
  + {\| \jxi^{3/2} \wt{f}(t) \|}_{L^\infty} \right] < 2 \varepsilon_1,
  \\
& \sup_{t \in [0,T]} \left[ \jt^{-p_0} {\| \jxi^4 \wt{g}(t) \|}_{L^2} + 
  \jt^{1/2} \| e^{-it\langle \partial_x \rangle} \mathbf{1}_{\pm}(D) \mathcal{W}^* g(t) \|_{L^\infty} \right]
  < 8\varepsilon_1,
\end{split}
\end{align}
with $\e_1 = C\e$ for some absolute constant $C>0$ sufficiently large.
Then, we have
\begin{equation}
\label{eqbootstrap}
\begin{split}
& \sup_{t \in [0,T]} \left[ \jt^{-p_0} {\| \jxi^4 \wt{f}(t) \|}_{L^2} 
  + \jt^{-\alpha} {\| \jxi \partial_\xi \wt{f}(t) \|}_{L^2} 
  + {\| \jxi^{3/2} \wt{f}(t) \|}_{L^\infty} \right] < \varepsilon_1,
  \\
  & \sup_{t \in [0,T]} \left[ \jt^{-p_0} {\| \jxi^4 \wt{g}(t) \|}_{L^2} 
  + \jt^{1/2} \| e^{-it\langle \partial_x \rangle} \mathbf{1}_{\pm}(D) \mathcal{W}^* g(t) \|_{L^\infty} \right]
  < 4\varepsilon_1.
\end{split}
\end{equation}
\end{prop}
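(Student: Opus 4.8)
The proof is a standard bootstrap/continuity argument: it suffices to show that the a priori assumptions \eqref{bootstrap0} on $[0,T]$ imply the improved bounds \eqref{eqbootstrap} on the same interval, with $\varepsilon_1$ chosen $\gtrsim \varepsilon$ but small enough (depending on the implicit constants) so that the initial data contributes at most $\varepsilon \le \varepsilon_1/2$ to each quantity at $t=0$ via Proposition \ref{propFT}(iii)--(iv) and the definitions \eqref{profile0}--\eqref{profile1}. I would organize the work around estimating, uniformly for $t\in[0,T]$, each of the five quantities in the bootstrap, integrating the renormalized equation \eqref{Renodtf} $\partial_t \wt f = \mathcal Q^R(f,f) + \mathcal C(f,f,f) + \mathcal R(f,g)$ in time and using the time decomposition \eqref{timedecomp}.

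\textbf{The energy and $L^\infty$ bounds.} First I would handle the ``easy'' norms. The high Sobolev bound $\jt^{-p_0}\|\jxi^4\wt f\|_{L^2}$ follows by an energy-type estimate: using Proposition \ref{propFT}(iii) to pass between $\|\jxi^4 \wt f\|_{L^2}$ and $\|f\|_{H^4}$, and applying the bilinear/trilinear bounds of Lemma \ref{lemQR} and Lemma \ref{lemCS} (with their Sobolev upgrades to $W^{0+,p}$) together with the linear decay estimate \eqref{dec}, one gets $\partial_t \|\jxi^4\wt f\| \lesssim \varepsilon_1^2 \jt^{-1+} \jt^{p_0}$, which integrates to $\lesssim \varepsilon_1^2 \jt^{p_0}$; choosing $p_0$ small relative to the $0+$ losses closes this. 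The corresponding bound for $g$ is obtained from the one on $f$ together with the boundedness of the normal form transformation $T$ on the relevant spaces. The pointwise bound $\|\jxi^{3/2}\wt f\|_{L^\infty}$ is closed by estimating $\partial_t\wt f$ pointwise in $\xi$: the regular quadratic term $\mathcal Q^R$ and the cubic terms are integrated via stationary phase / the decay estimate \eqref{dec}, with the key gains coming from the factors $\eta/\jeta\cdot\sigma/\jsig$ in $\mathfrak q$ (see \eqref{qsymbol}) and from \eqref{lemCSend} for the singular cubic term, which yields a $\varepsilon_1^2/t$ bound; the $\delta$ and $\pv$ pieces of $\mathcal C^S$ are handled separately as noted after Lemma \ref{lemCS}. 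The dispersive bound $\jt^{1/2}\|e^{-it\<\partial_x\>}\mathbf 1_\pm(D)\mathcal W^* g\|_{L^\infty}$ is then a consequence of \eqref{dec} applied to $g = f + T(g,g)$, plus control of the nonlinear corrections.

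\textbf{The weighted $L^2$ bound — the main obstacle.} The genuinely hard estimate, as the authors signal in the organization section, is propagating $\jt^{-\alpha}\|\jxi\partial_\xi \wt f\|_{L^2}$. Here one differentiates \eqref{Renodtf} in $\xi$; the $\partial_\xi$ can fall on the phase $e^{it\Phi}$, producing a factor of $t$ times $\partial_\xi\Phi$, on the symbols, or on the profiles. The dangerous contribution is when $\partial_\xi$ hits the oscillation: one writes $t\,\partial_\xi\Phi\, e^{it\Phi} = -i(\partial_\xi\Phi/\partial_\eta\Phi)\partial_\eta e^{it\Phi}$ (or the $\sigma$ analogue) and integrates by parts in $\eta$ or $\sigma$, at which point the crucial point is that the symbol $\mathfrak q$ vanishes (to first order) at $\eta=0$ and $\sigma=0$ — this is exactly the cancellation encoded in \eqref{estmu1}--\eqref{estmu2}, \eqref{qsymbol}, coming from the fact that $K_o(x,0)=0$ (see \eqref{TRKodd}) — so that the boundary terms and the $1/\partial_\eta\Phi\sim 1/\eta$ singularities are controlled. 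Compared to \cite{KGV1d}, there are no bad frequencies $\pm\sqrt3$ to excise, so the space-time resonance analysis is simpler; the contributions are split using the time decomposition \eqref{timedecomp} into small-time ($s\lesssim 1$), medium, and large-time ($s\sim t$) pieces, and on each dyadic block one balances the $L^2\times L^\infty$ (or $L^\infty\times L^2$) Hölder split against the decay $\jt^{-1/2}$ per factor, using Lemma \ref{lemQR} for $\mathcal Q^R$ and the trilinear bounds of Lemma \ref{lemCS} (in particular \eqref{lemCSend}) for $\mathcal C^S$. For the singular cubic term one also must account for the $\delta(p)$ and $\pv\,\what\phi(p)/p$ structure from \eqref{cubicSm}: the $\delta$ reduces the dimension of integration and the $\pv$ is handled by the a priori regularity of $\wt f$ together with the cutoff $\varphi^*$ in \eqref{propmustar}. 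The remainder terms $\mathcal R_Q,\mathcal R_C$ in \eqref{RQ}--\eqref{RC} are strictly better since each carries at least one copy of $T(g,g)$, which is quadratic and hence smaller and more decaying; these are dispatched by the same machinery as in Section \ref{secred}. Collecting all contributions gives $\|\jxi\partial_\xi\wt f(t)\| \lesssim \varepsilon + \varepsilon_1^2 \jt^{\alpha}$, and choosing $\varepsilon_1$ small closes the bootstrap.
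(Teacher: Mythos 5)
Your overall architecture matches the paper's: a continuity argument on the stated norms, Duhamel on the renormalized equation \eqref{Renodtf}, the time decomposition \eqref{timedecomp}, integration by parts in $\eta,\sigma$ exploiting the vanishing of $\mathfrak{q}$ and of $\wt{f}$ at zero frequency, and the trilinear bounds of Lemma \ref{lemCS} for the cubic remainders. However, there are two genuine gaps at exactly the two hardest points of the proof.

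First, the weighted bound for the \emph{singular} cubic term $\mathcal{C}^S$. Your plan (``the $\delta$ reduces the dimension of integration and the $\pv$ is handled by the a priori regularity of $\wt f$ together with the cutoff $\varphi^*$'') does not work as stated: when $\jxi\partial_\xi$ hits the phase in \eqref{Cubic2ot1} it produces a factor $it\,\jxi\partial_\xi\Phi$ multiplying $\pv\,\widehat{\phi}(p)/p$, and neither a direct H\"older estimate (which loses the factor $t$ outright, since \eqref{lemCSend} only gives $\e_1^2/t$ and the target is $\e_1 t^{\alpha}$) nor a naive integration by parts in a single frequency variable (which produces the non-integrable singularity $\widehat{\phi}(p)/p^2$) closes the estimate. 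The paper's key device is the commutation identity \eqref{Cubotherid}, $(\jxi \partial_\xi + X_{\eta,\s,\theta}) \Phi = p$ with $X_{\eta,\s,\theta}=\lambda\iota_1\jeta\partial_\eta+\mu\iota_2\jsig\partial_\sigma+\nu\iota_3\langle\theta\rangle\partial_\theta$: the factor $p$ cancels the $\pv$ singularity in the main term \eqref{Cubic2ot5.1}, the vector field $X$ redistributes the derivative onto the profiles (term \eqref{Cubic2ot5.2}, handled by \eqref{lemCSend}), and the companion identity $(\jxi\partial_\xi+X_{\eta,\s,\theta})p=\Phi$ converts the term where the kernel is differentiated into one with a factor of $\Phi$, enabling integration by parts in $s$. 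This structural input is absent from your proposal and cannot be replaced by regularity of $\wt f$ alone.

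Second, the Fourier $L^\infty$ norm. You propose to integrate a pointwise bound $|\partial_t\wt f|\lesssim\e_1^2/t$ coming from \eqref{lemCSend}; but $\int^t s^{-1}\,ds=\log t$ diverges, and the norm $\|\jxi^{3/2}\wt f\|_{L^\infty}$ must be bounded \emph{uniformly} by $\e_1$ — indeed the asymptotics \eqref{asy} show $\wt f$ does not converge but rotates logarithmically in phase. Closing this norm requires extracting the leading resonant contribution of $\mathcal{C}^S$ as $\frac{i}{t}\,c(\xi)|\wt f(t,\xi)|^2\wt f(t,\xi)$ with $c$ real (the Hamiltonian ODE of \cite[Sec.~10]{KGV1d}, cited in Subsection \ref{Linfty}), so that the modulus $|\wt f|$ is conserved to leading order; a crude absolute-value bound cannot succeed. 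A lesser but still real issue is the $\mathcal Q^R$ weighted bound: a pure frequency-integration-by-parts plus Young/H\"older argument yields $\e_1^2\jt^{2\alpha}$ (each $\|\partial\wt f\|$ costs $\jt^\alpha$), which does not close against the target $\e_1\jt^\alpha$; the paper's three-fold splitting \eqref{reg2} into a low-frequency piece (gaining from the restricted $L^1$ norm), an almost-resonant piece (Schur's lemma on the measure of $\{|\Phi|\lesssim 2^{2J}\}$), and a non-resonant piece (integration by parts in $s$, using the bounds \eqref{dsfL^infty}--\eqref{dsfL^2} on $\partial_s\wt f$) is what recovers the summable gain $2^{-\delta m}$ in \eqref{secregmain}. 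Your reference to ``space-time resonance analysis'' gestures in this direction but does not supply the decomposition.
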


\medskip
At the heart of the proof of Proposition \ref{propbootstrap} are the following weighted-type estimates for the quadratic and 
cubic terms in \eqref{Renodtf}, under the assumptions \eqref{bootstrap0}:
\begin{align}\label{bootQR}
{\Big\| \jxi \partial_\xi \int_0^t \mathcal{Q}^R(s,\xi) \, ds \Big\|}_{L^2} 
   + {\Big\| \jxi \partial_\xi \int_0^t \mathcal{C}^S(s,\xi) \, ds \Big\|}_{L^2} \lesssim \e_1^2 \jt^\alpha.
\end{align}
These estimates are proven in Sections \ref{sectionregular} and \ref{secCS}. 
The bounds on the other two norms - the Sobolev-type norm and the Fourier $L^\infty$ norm -
are discussed in Section \ref{secred}. 


\medskip
Here are some immediate consequences of the bootstrap assumptions:

\begin{lem} \label{ibis}
Under the assumptions of Proposition \ref{propbootstrap} we have

\begin{itemize}

\item[(i)] (Global decay of the solution)
For $v = (\partial_t - i B)u$ we have
\begin{align*}
\| v \|_{L^\infty} \lesssim \e_1 \langle t \rangle ^{-1/2}.
\end{align*}

\item[(ii)] (Decay for the derivative of the profile)

\begin{align}\label{dsfL^infty}
\|   e^{-itB}  \partial_t f \|_{L^\infty} \lesssim \e_1^2 \langle t \rangle^{-3/2 + 2\alpha},
\end{align}
and
\begin{align}\label{dsfL^2}
{\big\|  \partial_t f(t) \big\|}_{L^2} \lesssim \e_1^2   \jt^{-1+\alpha/2}  .
\end{align}


\end{itemize}

\end{lem}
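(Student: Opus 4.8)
The plan is to derive Lemma~\ref{ibis} directly from the bootstrap assumptions \eqref{bootstrap0} and the structure of the renormalized equation \eqref{Renodtf}, using nothing beyond the linear decay estimate \eqref{dec}, the mapping properties of $\wtF$ in Proposition~\ref{propFT}, and the bilinear/trilinear H\"older-type bounds of Lemmas~\ref{lemQR} and \ref{lemCS}. Part (i) is almost immediate: since $v = e^{-itB}g$ and the wave operator $\mathcal{W}$ intertwines $e^{-itB}$ with the flat evolution $e^{-it\langle\partial_x\rangle}$ (see \eqref{Wopdef}), we have $\|v\|_{L^\infty} = \|e^{-itB}g\|_{L^\infty} \lesssim \|e^{-it\langle\partial_x\rangle}\mathcal{W}^*g\|_{L^\infty} + (\text{low-frequency error})$, and the $L^\infty$ piece of the second line of \eqref{bootstrap0} bounds this by $\e_1\jt^{-1/2}$. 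Alternatively one applies \eqref{dec} to $f$ plus a bound on $T(g,g)$; I would mention both but lean on the already-assumed Fourier $L^\infty$ control of $\mathcal{W}^*g$ to keep things short.

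For part (ii), the key identity is \eqref{Renodtf}: $\partial_t\wt f = \mathcal{Q}^R(f,f) + \mathcal{C}(f,f,f) + \mathcal{R}(f,g)$. To get \eqref{dsfL^infty} I would apply the linear decay estimate \eqref{dec} to $\partial_t f$ itself, which requires controlling $\|\jxi^{3/2}\partial_t\wt f\|_{L^\infty}$, $\|\jxi\partial_\xi\partial_t\wt f\|_{L^2}$, and $\|\partial_t f\|_{H^4}$ — but this route forces one to differentiate the time-derivative equation in $\xi$, which is awkward. A cleaner approach: observe that $e^{isB}\partial_t f$ has flat-Fourier transform $e^{is\langle\xi\rangle}\wt{\mathcal{F}}(\partial_t f)$ composed with $\mathcal{W}$, and each term in \eqref{Renodtf} is of the form treated in Lemmas~\ref{lemQR}--\ref{lemCS}. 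For $\mathcal{Q}^R(f,f)$, apply \eqref{lemQRpq} with, say, $p_1 = \infty$-ish exponents $(p_1,p_2)$ close to $(\infty,\infty)$ with $\tfrac1{p_1}+\tfrac1{p_2} < \tfrac12$ — actually one needs the $L^\infty$-type decay $\|e^{-it\langle\partial_x\rangle}\mathcal{W}^*f\|_{L^\infty}\lesssim \e_1\jt^{-1/2}$, which follows from the third term of \eqref{bootstrap0} via \eqref{dec} — giving a bound of order $\jt^{-1}$ times Sobolev factors; interpolating the $L^\infty$ decay against the Sobolev growth $\jt^{p_0}$ yields the $\jt^{-3/2+2\alpha}$ (with room to spare since $p_0 \ll \alpha$). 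For $\mathcal{C}^S(f,f,f)$, use precisely \eqref{lemCSend} with $a=b=f$, $c=f$: it gives $\lesssim \e_1^2 t^{-1}\|\langle\partial_x\rangle^{-1+}e^{-it\langle\partial_x\rangle}\mathcal{W}^*f\|_{L^p}$, and choosing $p$ appropriately and using again the $\jt^{-1/2}$ decay of $\mathcal{W}^*f$ produces $\jt^{-3/2}$ up to $\e_1$-losses. The regular cubic $\mathcal{C}^R$ is even better by \eqref{CubicRm}, and the remainder $\mathcal{R}(f,g)$ is handled the same way after inserting the H\"older bounds for $T(g,g)$ (which are controlled by the second line of \eqref{bootstrap0}); these are ``lower order'' as flagged after \eqref{RC}.

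For the $L^2$ bound \eqref{dsfL^2}, I would again use \eqref{Renodtf} term by term, now estimating each in $L^2_\xi$. The quadratic term $\mathcal{Q}^R(f,f)$ is bounded via \eqref{lemQRpq} by $\|\langle\partial_x\rangle^{-1+}e^{-i\iota_1 t\langle\partial_x\rangle}\mathcal{W}^*f\|_{L^{p_1}}\|e^{-i\iota_2 t\langle\partial_x\rangle}\mathcal{W}^*f\|_{L^{p_2}}$ for suitable $p_1,p_2\in[2,\infty)$ with $\tfrac1{p_1}+\tfrac1{p_2}<\tfrac12$; taking one factor in $L^\infty$ (decay $\jt^{-1/2}$) and interpolating the other between $L^2$ (Sobolev bound, growth $\jt^{p_0}$) and $L^\infty$ (decay $\jt^{-1/2}$) gives total decay $\jt^{-1+}$, i.e. $\jt^{-1+2\alpha}$ with margin. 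The cubic terms are estimated using \eqref{lemCS1} with one exponent $2$ and the other two $\infty$ (or \eqref{lemCSend} directly), again producing $\jt^{-1+2\alpha}$ after interpolation, and the remainder terms once more follow the same pattern. The main obstacle — though it is more bookkeeping than genuine difficulty — is to check that the interpolation exponents can always be chosen inside the admissible ranges of Lemmas~\ref{lemQR} and \ref{lemCS} (the strict inequality $\tfrac1{p_1}+\tfrac1{p_2}<\tfrac12$ and $p_j\in(1,\infty)$) while still extracting enough decay to beat the $\jt^{p_0}$ Sobolev growth; since $p_0\ll\alpha\ll1$, there is always slack, so the constraints are satisfied by taking the $L^\infty$-side exponents slightly finite. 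I would present this as a short unified argument: decompose via \eqref{Renodtf}, apply the relevant lemma to each term, and interpolate the resulting norms using \eqref{bootstrap0} and \eqref{dec}.
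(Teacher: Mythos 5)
Your treatment of part (i), of the cubic contributions, and of the $L^2$ bound \eqref{dsfL^2} is essentially the paper's argument (the paper closes (i) by applying \eqref{dec} to $f$ plus a product estimate for $T(g,g)$ rather than invoking $L^\infty$-boundedness of the wave operator, which is delicate in the exceptional case — but you flag that alternative yourself). The genuine gap is in your proof of \eqref{dsfL^infty} for the quadratic term $\mathcal{Q}^R(f,f)$. A bilinear H\"older estimate of the type \eqref{lemQRpq}, with both inputs placed in (near-)$L^\infty$, can never produce more than $\jt^{-1}$ decay, since each Klein--Gordon factor decays only like $\jt^{-1/2}$; and your claim that "interpolating the $L^\infty$ decay against the Sobolev growth $\jt^{p_0}$ yields the $\jt^{-3/2+2\alpha}$" goes the wrong way — interpolating a $\jt^{-1}$ bound against a $\jt^{p_0}$ bound gives something \emph{worse} than $\jt^{-1}$, never better. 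The rate $\jt^{-3/2+2\alpha}$ cannot come from product estimates alone.

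The missing idea is the null structure of the regular quadratic symbol: by \eqref{qsymbol}, $\mathfrak{q}$ carries the factors $\eta/\jeta$ and $\s/\jsig$, which vanish at zero frequency. The paper reduces \eqref{dsfL^infty} for this term, via Sobolev embedding, to a weighted $L^2_\xi$ bound on $\xi\, e^{is\jxi}\mathcal{Q}^R(f,f)$, and then uses these vanishing factors to integrate by parts in both $\eta$ and $\s$ through $e^{is\jsig}=\jsig(is\s)^{-1}\partial_\s e^{is\jsig}$ (the boundary terms vanish because $\wt f(0)=0$). This converts the bilinear term into expressions like $Q_1,Q_2$ in \eqref{dsfpr3} carrying a prefactor $s^{-2}$ against $\|\partial_\xi\wt f\|_{L^2}\|\partial_\xi\wt f\|_{L^1}\lesssim\e_1^2 s^{2\alpha}$, yielding $s^{-2+2\alpha}$ — in fact better than the claimed $s^{-3/2+2\alpha}$. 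This rate is not cosmetic: it is consumed downstream in the estimate of $R_{m,2}$ in Section \ref{sectionregular}, where the interpolation of \eqref{dsfL^infty} with \eqref{dsfL^2} must beat a factor $2^{m}\cdot 2^{140\alpha m}$; a $\jt^{-1}$ bound in place of $\jt^{-3/2+2\alpha}$ would not close that estimate.
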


\begin{proof}
Recall that $v = e^{-itB}g$. The first assertion follows from 
$g = f + T(g,g)$, the dispersive estimate \eqref{dec} applied to $e^{-itB}f$
with the a priori bounds \eqref{bootstrap0},
and a product estimate for the quadratic term $e^{-itB}T(g,g)$; we refer the 
reader to Proposition 7.1 in \cite{KGV1d} for the details.

Turning to (ii), recall that
\begin{align}\label{dsfpr1}
\begin{split}
e^{-isB} \partial_s f = \widetilde{\mathcal{F}}^{-1} 
  e^{-is\jxi} \mathcal{Q}^R(f,f) + 
  \widetilde{\mathcal{F}}^{-1} e^{-is\jxi}  \mathcal{C}^S(f,f,f) 
  \\
  + \, \widetilde{\mathcal{F}}^{-1} e^{-is\jxi}  \mathcal{C}^R(f,f,f) + \{ \mbox{better terms} \}.
\end{split}
\end{align}

We start with $\mathcal{Q}^R$, which can actually be seen to satisfy a better bound of the form $\js^{-2+2\alpha}$.
By Sobolev's embedding, it suffices to show that, for $s \geq 1$, we have
\begin{align}\label{dsfpr1'}
{\|  \jxi \mathcal{Q}^R(f,f)(\xi)  \|}_{L^2} \lesssim s^{-3/2}.
\end{align}

From \eqref{QR} and \eqref{qsymbol} we have 
\begin{align}\label{dsfpr2}
\begin{split}
I(s,\xi) :=  \jxi \mathcal{Q}^R(f,f)(\xi) 
  & = \iint e^{is \Phi(\xi,\eta,\sigma)} \, \xi \,\mathfrak{q}(\xi,\eta,\sigma) 
  \, \wt{f}(\eta) \wt{f}(\sigma) \, d\eta \, d\sigma
  \\
  & = 
  \iint_{\R^2_+} e^{is\Phi} \wt{f}(\eta) \wt{f}(\sigma) \, \frac{\eta}{\jeta} \frac{\s}{\jsig} \frac{\xi}{\jeta+\jsig}
  \mathfrak{q}'(\xi,\eta,\sigma) \,d\eta \, d\sigma,
\end{split}
\end{align}
where $\mathfrak{q}'$ satisfies \eqref{q'symbol}.
Note that we can use the factors of $\eta/\jeta$ and $\s/\jsig$ to integrate by parts in \eqref{dsfpr2}.
Such an argument is performed very similarly in Section \ref{sectionregular} to bound the term $L_m$
appearing in \eqref{reglow}; the only difference between \eqref{reglow} and the term above
is that \eqref{dsfpr2} does not have a low frequency cutoff $\chi^l_m$, 
and \eqref{reglow} has a factor of $s$ and it is integrated in time. 

Integrating by parts in \eqref{dsfpr2} will then give the main terms 
\begin{align}\label{dsfpr3}
\begin{split}
Q_1(s,\xi) & = \frac{1}{s^2}\iint e^{is \Phi(\xi,\eta,\sigma)}
  \partial_\eta \wt{f}(\eta) \partial_\s \wt{f}(\sigma) \frac{\jxi}{\jeta+\jsig} \mathfrak{q}'(\xi,\eta,\sigma) \,d\eta \, d\sigma,
  \\
Q_2(s,\xi) & = \frac{1}{s^2} \iint e^{is \Phi(\xi,\eta,\sigma)} 
  \partial_\eta \wt{f}(\eta) \wt{f}(\s)
  \frac{\jxi}{\jeta+\jsig} \partial_\s \mathfrak{q}'(\xi,\eta,\sigma) \,d\eta \, d\sigma.
\end{split}
\end{align}
Using \eqref{q'symbol}, gives
\begin{align*}
\nonumber
{\big\| Q_1(s) \big\|}_{L^2} & \lesssim s^{-2} 
  {\Big\| \iint \big| \partial_\eta \wt{f}(\eta) \big| \, 
  \big| \partial_\sigma \wt{f} (\sigma) \big|
  \, \frac{1}{\langle \xi-\eta-\sigma \rangle^{N-1}} \,d\eta \,d\sigma \Big\|}_{L^2_\xi}
  \\ 
  &   \lesssim s^{-2}  {\| \partial_\eta \wt{f} \|}_{L^2}    \cdot {\| \partial_\s \wt{f} \|}_{L^1} 
  \\
\nonumber  
& \lesssim s^{-2} \e_1^2 s^{2\alpha},
\end{align*}
which is more than sufficient. A similar estimate holds for $Q_2$.


The remaining cubic terms in \eqref{dsfpr1} can be treated using Lemma \ref{lemCS},
and the comment after its statement, to remedy the lack of the endpoint estimate with $p=\infty$
as follows:
choosing $1/p < s < \alpha$ 
and using successively Sobolev's embedding, Lemma \ref{lemCS}, 
the intertwining property and boundedness of the wave operator,
and, finally, Proposition \ref{propdecay0} together with interpolation 
to deduce decay of $e^{\pm itB} f$ in $L^{3p}$, we get
\begin{align*}
{\big\| \widetilde{\mathcal{F}}^{-1} e^{-it\jxi} \mathcal{C}^S(f,f,f) \big\|}_{L^\infty}
& \lesssim {\big\| \langle \partial_x \rangle^s \widetilde{\mathcal{F}}^{-1} 
  e^{-it\jxi} \mathcal{C}^S(f,f,f) \big\|}_{L^p} 
  \\
& \lesssim \| e^{\pm i t \langle \partial_x \rangle} \mathcal{W}^* f \|_{L^{3p}}^3 
  \lesssim \| \mathcal{W}^* e^{\pm itB} f \|_{L^{3p}}^3 
  \\
  & \lesssim {\| e^{\pm itB} f \|}_{L^{3p}}^3
  \lesssim \big( \varepsilon_1 t^{-1/2 + 1/(3p) + 2p_0/(3p)} \big)^3
  \lesssim \varepsilon_1^3 t^{-3/2 + 2\alpha}.
\end{align*}

Finally, the $L^2$ estimate \eqref{dsfL^2} is a consequence of the equation \eqref{dsfpr1} 
combined with the H\"older type bounds of Lemmas \ref{lemQR} and \ref{lemCS} and the a priori bounds \eqref{bootstrap0}.
\end{proof}

\begin{proof}[Proof of Theorem \ref{mainthm} from Proposition \ref{propbootstrap}]
To deduce the main theorem from the above proposition, 
the first step is to check that the bootstrap assumptions in \eqref{bootstrap0} are satisfied at $t=0$.
As far as $g$ is concerned, this can be done easily, since from \eqref{v}-\eqref{profile0} we have
$$g(t=0) = v (t=0) = u_1 - i B u_0.$$
Then, the assumptions on the second line of \eqref{bootstrap0} at $t=0$ 
hold true thanks to the smallness assumption in Theorem \eqref{mainthm},
and the boundedness of wave operators on Sobolev spaces, by choosing $\e_1 = C\e$ for $C$ large enough.
The verification of the assumptions for $f$ on the first line of \eqref{bootstrap0}
when $t=0$ is slightly more involved, since $f(t=0)$ is a nonlinear function of $g(t=0)$; 
for this we refer to \cite{KGV1d}, where this task is accomplished in estimates (7.1) to (7.5).

Assuming Proposition \ref{propbootstrap} and using a standard continuation argument 
together with Lemma \ref{ibis}, 
we obtain the main theorem.
\end{proof}


\medskip
\section{The main regular interaction}\label{sectionregular}
In this section we prove the weighted $L^2$ bound for the regular quadratic terms
\begin{align}\label{QRmainbound}
{\Big\| \jxi \partial_\xi \int_0^t \mathcal{Q}^R(s,\xi)  \, ds \Big\|}_{L^2} \lesssim 
  \e_1^2,
\end{align}
that is even stronger than \eqref{bootQR} and what is needed for \eqref{eqbootstrap}.
Applying $\partial_\xi$ to $\mathcal{Q}_R$ in \eqref{QR}, we see that the main term 
is the one where the derivative hits the phase, and therefore we can reduce matters to
estimating the expression
\begin{align}\label{reg1}
\begin{split}
& I_m(t,\xi) := \int_0^t \iint_{(\R_+)^2} s \xi \, 
  e^{is \Phi_{\iota_1 \iota_2}(\xi,\eta,\sigma)} \wt{f}_{\iota_1}(\eta) \wt{f}_{\iota_2} (\sigma)
  \mathfrak{q}_{\iota_1 \iota_2}(\xi,\eta,\sigma) \,d\eta \,d\sigma \, \tau_m(s) \,ds,
\\
& \Phi_{\iota_1 \iota_2}(\xi,\eta,\sigma) := \langle \xi \rangle -  
  \iota_1 \langle \eta \rangle - \iota_2 \langle \sigma \rangle, \qquad \iota_1,\iota_2 \in \{+,-\},  
\end{split}
\end{align}
and proving that, for any fixed $m$, we have
\begin{align}\label{secregmain}
{\| I_m(t) \|}_{L^2} \lesssim 2^{-\delta m} \e_1^2,
\end{align}
for some $\delta >0$ small enough.
For simplicity of notation we have omitted the dependence on the indexes $\iota_1,\iota_2$ in the definition of $I_m$,
and we will often disregard them in what follows since they play no major role;
the understanding is that the case $\iota_1=\iota_2=+$ is relatively harder then the rest, 
and it suffices to concentrate on this.

\subsubsection*{Decomposition of \eqref{reg1}}
Let us define the multipliers
\begin{align}\label{regcuts}
\begin{split}
& \chi^{l}_m(\xi,\eta,\sigma) := 1 - \varphi_{>J} (\xi) \varphi_{>J} (\eta) \varphi_{>J} (\sigma), 
\\
& \chi^{h}_m (\xi,\eta,\sigma) := 1 - \chi^{l}_m(\xi,\eta,\sigma) , \qquad   \qquad J := -(10\alpha) m.
\end{split}
\end{align}
On the support of $\chi^l$ we have $\min(|\xi|,|\eta|,|\sigma|) \lesssim 2^J$, 
while on the support of $\chi^h$ we have $\min(|\xi|,|\eta|,|\sigma|) \gtrsim 2^J$.
Using these, we split \eqref{reg1} as follows: 
\begin{align}\label{reg2}
I_m = L_m + H_m + R_m, 
\end{align}
where
\begin{align}\label{reglow}
L_m :=  \int_0^t \iint e^{is \Phi(\xi,\eta,\sigma)} s \xi\, 
   \widetilde{f}(\eta) \widetilde{f} (\sigma) \chi^{l}_m(\xi,\eta,\sigma) \mathfrak{q}(\xi,\eta,\sigma)
  \,d\eta \,d\sigma \, \tau_m(s) \,ds
\end{align}
is the low frequency term,
\begin{align}\label{regres}
H_m :=  \int_0^t \iint e^{is \Phi(\xi,\eta,\sigma)} s \xi\, 
   \widetilde{f}(\eta) \widetilde{f} (\sigma) \chi^{h}_m(\xi,\eta,\sigma) \mathfrak{q}(\xi,\eta,\sigma) 
   \varphi_{<2J}(\Phi)
  \,d\eta \,d\sigma \, \tau_m(s)\,ds
\end{align}
is an almost resonant term, and the remainder $R_m$ is
\begin{align}\label{regnonres}
& 
R_m := \int_0^t \iint e^{is \Phi(\xi,\eta,\sigma)} s \xi\, 
  \widetilde{f}(\eta) \widetilde{f}(\sigma) \chi^{h}_m(\xi,\eta,\sigma) \mathfrak{q}(\xi,\eta,\sigma) 
  \varphi_{\geq 2J}(\Phi)  \tau_m(s) 
  \,d\eta \,d\sigma \, \tau_m(s)\,ds. 
\end{align}



\smallskip
\subsubsection*{Estimate of \eqref{reglow}}
To estimate \eqref{reglow} we recall \eqref{qsymbol}-\eqref{q'symbol} 
and write
\begin{align}\label{regq'}
\begin{split}
& \mathfrak{q}(\xi,\eta,\s) = \mathfrak{q}'(\xi,\eta,\s) \cdot \frac{\eta}{\jeta} \cdot \frac{\s}{\jsig},
\\
& |\partial_\eta^b \partial_\s^c\mathfrak{q'}(\xi,\eta,\s)| \lesssim \langle \xi-\eta-\sigma \rangle^{-N}
  \min(\jeta, \jsig)^{b+c} \frac{1}{\jeta+\jsig}.
\end{split}
\end{align}
Note that we have disregarded the signs $\mu,\nu$ in \eqref{estmu2} since these play no relevant role.
We then integrate by parts in $\sigma$ through the identity 
$e^{is \jsig} = \jsig(i s \sigma)^{-1} \partial_\sigma e^{is \jsig}$, 
and similarly in $\eta$, noting that the boundary terms vanish because $\wt{f}(0)=0$.
Out of the terms arising from this operation, 
we single out the following ones, since the remaining terms are either similar or easier:
\begin{subequations}
\begin{align}
\label{macreuse1} 
L_{m,1} & := \int_0^t \iint_{(\R_+)^2} e^{is \Phi} \frac{1}{s} \xi
  \, \chi^{l}_m(\xi,\eta,\sigma) 
  \partial_\eta \widetilde{f}(\eta) \partial_\sigma \widetilde{f} (\sigma) \mathfrak{q}'(\xi,\eta,\sigma) 
  \,d\eta \,d\sigma \, \tau_m(s) \,ds ,
\\
\label{macreuse2} 
L_{m,2} & := \int_0^t \iint_{(\R_+)^2} e^{is \Phi} \frac{1}{s} \xi \, 
  \chi^{l}_m(\xi,\eta,\sigma)  \, \partial_\eta \wt{f}(\eta) \wt{f} (\sigma) \, 
  \partial_\s \mathfrak{q}'(\xi,\eta,\sigma) \,d\eta \,d\sigma \, \tau_m(s) \,ds ,
\\
\label{macreuse3}  
L_{m,3} & := \int_0^t \iint_{(\R_+)^2} e^{is \Phi} \frac{1}{s} \xi \, 
  \partial_\sigma  \chi^{l}_m(\xi,\eta,\sigma)
  \partial_\eta \wt{f}(\eta) \wt{f} (\sigma) \mathfrak{q}'(\xi,\eta,\sigma) \,d\eta \,d\sigma \, \tau_m(s) \,ds .
\end{align}
\end{subequations}

To estimate \eqref{macreuse1} in $L^2$ we first use \eqref{regq'} to bound 
\begin{align*}
& \big| L_{m,1}(t,\xi) \big| \lesssim \sup_{s\approx 2^m} 
  \iint_{(\R_+)^2}  \chi^{l}_m(\xi,\eta,\sigma) 
  \big| \partial_\eta \wt{f}(\eta) \big| \, \big| \partial_\sigma \wt{f} (\sigma) \big|
  \, K(\xi,\eta,\s) \,d\eta \,d\sigma,
  \\
  & K(\xi,\eta,\s) := \frac{|\xi|}{\jeta + \jsig} \frac{1}{\langle\xi-\eta-\sigma \rangle^N}.
\end{align*}
Note that $K(\xi,\eta,\s) \lesssim {\langle\xi-\eta-\sigma \rangle}^{-N+1}$,
and recall that $\min\{|\xi|, |\eta|, |\s| \} \lesssim C2^J$. 
For the part $ |\s| \lesssim C2^J$, 
with Young's inequality followed by Cauchy-Schwarz we obtain
\begin{align}
\nonumber
{\big\| L_{m,1}(t,\cdot) \big\|}_{L^2} & \lesssim \sup_{s\approx 2^m} 
  {\Big\| \iint_{(\R_+)^2} \big| \partial_\eta \wt{f}(\eta) \big| \, 
  \varphi_{< J + D}(\s) \big| \partial_\sigma \wt{f} (\sigma) \big|
  \, \frac{1}{\langle \xi-\eta-\sigma \rangle^{N-1}} \,d\eta \,d\sigma \Big\|}_{L^2_\xi}
  \\ 
\label{regreuse}
& \lesssim \sup_{s\approx 2^m} {\| \partial_\eta \wt{f} \|}_{L^2}  \cdot {\| \varphi_{< J + D}(\s)\partial_\s \wt{f} \|}_{L^1} 
  \\
\nonumber  
& \lesssim \sup_{s\approx 2^m}  {\| \partial_\eta \wt{f} \|}_{L^2}  \cdot 2^{J/2} {\| \partial_\s \wt{f} \|}_{L^2} 
  \lesssim \e_1^2 2^{-3m\alpha}.
\end{align}
The case $|\eta| \lesssim C2^J$ can be dealt with in the same way due to symmetry. 
For the case $ |\xi| \lesssim C2^J$, 
we have $K(\xi,\eta,\s) 
\lesssim 2^J {\langle\xi-\eta-\sigma \rangle}^{-N} $. 
Hence by Young's inequality and Cauchy-Schwarz
\begin{align*}
{\big\| L_{m,1}(t,\cdot) \big\|}_{L^2} & \lesssim \sup_{s\approx 2^m} 
  {\Big\| \iint_{(\R_+)^2} \big| \partial_\eta \wt{f}(\eta) \big| \, 
    \big| \partial_\sigma \wt{f} (\sigma) \big|
  \, \frac{2^J}{\langle \xi-\eta-\sigma \rangle^{N}} \,d\eta \,d\sigma \Big\|}_{L^2_\xi}
  \\ 
& \lesssim 2^J \sup_{s\approx 2^m} {\| \partial_\eta \wt{f} \|}_{L^2}  \cdot {\|  \partial_\s \wt{f} \|}_{L^1} 
  \lesssim \e_1^2 2^{-3m\alpha}.
\end{align*}

A similar argument applies to \eqref{macreuse2} since $|\partial_\sigma q'| \lesssim \langle \xi- \eta - \sigma \rangle^{-N}$. 

For \eqref{macreuse3} we notice that $\partial_\sigma  \chi^{l}_m(\xi,\eta,\sigma) 
= -\varphi_{>J} (\xi) \varphi_{>J} (\eta) \varphi_{\sim J} (\sigma) 2^{-J}$,
and use again \eqref{regq'} and Young's inequality to get
\begin{align*}
{\| L_{m,3}(t) \|}_{L^2} & \lesssim \sup_{s\approx 2^m} 
  \iint_{(\R_+)^2}  2^{-J}
  \big| \partial_\eta \wt{f}(\eta) \big| \, \big| \varphi_{\sim J}(\s)\wt{f} (\sigma) \big|
  \, K(\xi,\eta,\s) \,d\eta \,d\sigma,
\\
& \lesssim  \sup_{s\approx 2^m} 2^{-J} {\| \partial_\eta \wt{f} \|}_{L^2} 
  \cdot 2^{J/2} {\| \varphi_{\sim J}\wt{f} \|}_{L^2}  
  \\ 
  &  \lesssim 2^{J/2} \sup_{s\approx 2^m} {\| \partial_\eta \wt{f} \|}_{L^2} 
  {\| \partial_\s \wt{f} \|}_{L^2}  \lesssim \e_1^2 2^{-3m\alpha},   
\end{align*}
having applied Hardy's inequality 
${\| \varphi_{\sim J} \wt{f} \|}_{L^2} \lesssim 2^J {\| \partial_\xi \wt{f} \|}_{L^2}$ for $\wt{f}(0)=0$.

\smallskip
\subsubsection*{Estimate of \eqref{regres}}
To estimate these terms we first integrate by parts as in the previous case, 
and then use Schur's lemma 
to take advantage of the restriction on the size of $|\Phi| \lesssim 2^{2J} \approx \js^{-20\alpha}$.
More precisely, integration by parts in $\eta$ and $\s$ gives terms as in \eqref{macreuse1}-\eqref{macreuse3}
with the additional cutoff $\varphi_{<2J}(\Phi)$ and with $\chi_m^l$ replaced by $\chi_m^h$;
we denote these terms by $H_{m,1}, H_{m,2}, H_{m,3}$, respectively.
There are also terms where the cutoff $\varphi_{<2J}(\Phi)$ is hit by the derivatives,
but these are easier to estimate since $\partial_\eta \varphi_{<2J}(\Phi) = 2^{-2J}\varphi_{\sim 2J}(\Phi)\eta/\jeta$
(and similarly for $\s$)
and one can use the factor $\eta/\jeta$ to repeat the integration by parts; 
so we disregard them.

To estimate the term $H_{m,1}$ we first assume, without loss of generality because of symmetry, 
that $|\eta| \geq |\s|$. We then bound
\begin{align*}
& |H_{m,1}(t,\xi)| \lesssim 
  \int_0^t \frac{1}{s} 
  \int_{\R_+} K_1(\xi,\eta) \, |\partial_\eta \wt{f}(\eta)| \,d\eta 
  \, \tau_m(s) \,ds 
  \\
& K_1(\xi,\eta) := 
  \int_{\R_+}\chi^{h}_m(\xi,\eta,\sigma) \varphi_{<2J}(\Phi(\xi,\eta,\s))
  \, |\partial_\sigma \wt{f}(\sigma)|\,d\sigma.
\end{align*}
having used 
$|\xi||\mathfrak{q}'| \lesssim 1$. 
Then we note that for any fixed $|\eta| \gtrsim 2^J$, the region in $\xi$ with
$|\xi|\gtrsim 2^J$ and $|\Phi(\xi,\eta,\s)| \lesssim A$, with $A\lesssim 2^{2J}$,
has size less than $C A 2^{-J}$, with a symmetric estimate exchanging the roles of $\xi$ and $\eta$.
Therefore, we have
\begin{align*}
\int |K_1(\xi,\eta)| \, d\xi + \int |K_1(\xi,\eta)| \, d\eta 
  \lesssim 2^{2J} \cdot 2^{-J} \cdot {\| \partial_\s \wt{f} \|}_{L^1} \lesssim \e_1 2^{-9\alpha m}.
\end{align*}
By applying Schur's Lemma we obtain the bound 
$${\| H_{m,1}(t,\xi)\|}_{L^2} \lesssim 2^{-9\alpha m} {\| \partial_\eta \wt{f} \|}_{L^2} 
  \lesssim \e_1^2 2^{-8\alpha m}.$$ 

The term $H_{m,2}$ can be estimated in a similar way, 
using $2^{-J} \| \varphi_{\sim J}\wt{f} \|_{L^1} \lesssim \e_1$. 

The term $H_{m,3}$ can also be estimated similarly, since
\begin{align*}
& |H_{m,3}(t,\xi)| \lesssim 
  \int_0^t \frac{1}{s} 
  \int_{\R_+} K_3(\xi,\eta) \, |\partial_\eta \wt{f}(\eta)| \,d\eta \, \tau_m(s) \,ds,
  \\
& K_3(\xi,\eta) := 
   \varphi_{>J} (\xi) \varphi_{>J} (\eta) \int_{\R_+} 
  2^{-J} |\varphi_{\sim J} (\sigma)| \varphi_{<2J}(\Phi(\xi,\eta,\s))
  \, |\wt{f}(\sigma)|\,d\sigma,
\end{align*}
and we have
\begin{align*}
\int |K_3(\xi,\eta)| \, d\xi + \int |K_3(\xi,\eta)| \, d\eta 
  \lesssim 2^{2J} \cdot 2^{-J} \cdot {\| \wt{f} \|}_{L^\infty} \lesssim \e_1 2^{-10\alpha m}.
\end{align*}

\smallskip
\subsubsection*{Estimate of \eqref{regnonres}}
In this case we can integrate by parts in $s$ using $e^{is \Phi} = (i\Phi)^{-1}\partial_s e^{is\Phi}$,
and the lower bound on $|\Phi| \gtrsim 2^{2J}$.
This gives several terms, but it suffices to focus on the most relevant one, namely
\begin{equation}\label{cormoran0}
\int_0^t \iint_{(\R_+)^2} e^{is \Phi(\xi,\eta,\sigma)} \xi s \,
 \frac{\varphi_{\geq 2J}(\Phi)}{\Phi} \, \partial_s \wt{f}(\eta) \wt{f} (\sigma)
 \, \mathfrak{q}(\xi,\eta,\sigma) \,d\eta \,d\sigma \, \tau_m(s) \, ds,
\end{equation}
disregarding the symmetric one where $\partial_s$ hits the other profile,
and the easier ones where it hits $s, \tau_m$ and that can be estimated using integration by parts in both $\eta$ and $\s$.

Integrating by parts in $\sigma$ in \eqref{cormoran0} gives the terms
\begin{subequations}
\begin{align}  
\label{cormoran1}
R_{m,1} & := \int_0^t \iint_{(\R_+)^2} e^{is \Phi} \frac{\xi\eta}{\jeta} \,
 \, \partial_s \wt{f}(\eta) \wt{f} (\sigma)
 \, \partial_\s \Big[ \frac{\varphi_{\geq 2J}(\Phi)}{\Phi} \chi^{h}_m(\xi,\eta,\sigma) \mathfrak{q}'(\xi,\eta,\sigma) \Big]
 \,d\eta \,d\sigma \, \tau_m(s) \, ds,
\\
\label{cormoran2} 
R_{m,2} & := \int_0^t \iint_{(\R_+)^2} e^{is \Phi} \frac{\xi\eta}{\jeta} \,
 \,  \frac{\varphi_{\geq 2J}(\Phi)}{\Phi} \chi^{h}_m(\xi,\eta,\sigma) \mathfrak{q}'(\xi,\eta,\sigma)
 \, \partial_s \wt{f}(\eta) \partial_\s \wt{f} (\sigma)
 \,d\eta \,d\sigma \, \tau_m(s) \, ds.
\end{align}
\end{subequations}
To treat \eqref{cormoran2} we first write it as
\begin{align*}
& R_{m,2}(t,\xi) = \int_0^t \iint_{(\R_+)^2} e^{is \Phi} \mathfrak{m}(\xi,\eta,\s) \, \partial_s \wt{f}(\eta) 
 \, \partial_\s \wt{f} (\sigma) \,d\eta \,d\sigma \, \tau_m(s) \, ds,
 \\
& \mathfrak{m}(\xi,\eta,\s) := \frac{\xi\eta}{\jeta} \,
 \,  \frac{\varphi_{\geq 2J}(\Phi)}{\Phi} \chi^{h}_m(\xi,\eta,\sigma) \mathfrak{q}'(\xi,\eta,\sigma).
\end{align*}
%
%
Note that we can estimate, for $a+b+c \leq 6$,
\begin{align}\label{regR2m}
| \partial_\xi^a \partial_\eta^b \partial_\s^c \mathfrak{m}(\xi,\eta,\s) | 
  \lesssim 2^{-2J} \cdot 2^{-2J(a+b+c)} \lesssim 2^{140\alpha m}.
\end{align}
This allows us to use the following classical lemma

\begin{lem}\label{lemBm}
Let $\mathfrak{m}$ be supported on $(\xi,\eta,\sigma) \in [t_1, t_1 + r_1] 
\times [t_2, t_2 + r_2] \times [t_3, t_3 + r_3]$ and satisfy, for  $a,b,c \in \{ 0,1,2 \}$,
$$
| \partial_\xi^a \partial_\eta^b \partial_\sigma^c \mathfrak{m}(\xi,\eta,\sigma) | \leq r_1^{-a} r_2^{-b} r_3^{-c}.
$$
Denote
$$
B_\mathfrak{m} (f, g) (x) := \widehat{\mathcal{F}}_{\xi \rightarrow x}^{-1} 
\iint \mathfrak{m} (\xi, \eta, \sigma) \widehat{f}(\eta)  \widehat{g}(\sigma)  d\eta \, d\sigma. 
$$ 
Then the operator norm of $B_\mathfrak{m}$ can be bounded by
$$
\| B_\mathfrak{m} \|_{L^p \times L^q \to L^r} \lesssim 1
$$
if $(p,q,r) \in [1,\infty]$ satisfy $\frac{1}{p} + \frac{1}{q} = \frac{1}{r}$.
\end{lem}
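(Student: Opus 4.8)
The plan is to reduce the statement to the case of the unit cube, exploit Fourier-inversion to write $B_\mathfrak{m}$ as an absolutely convergent superposition of ``translation'' operators, and then estimate each piece by a trivial $L^1\to L^1$ bound for translations together with Young's inequality. First I would rescale: setting $\xi = t_1 + r_1 \xi'$, $\eta = t_2 + r_2 \eta'$, $\sigma = t_3 + r_3 \sigma'$ and absorbing the modulation factors $e^{i t_1 x}$, $e^{-it_2 \cdot}$, $e^{-it_3\cdot}$ (which do not change any $L^p$ norm), one reduces to the case where $\mathfrak{m}$ is supported on $[0,1]^3$ and satisfies $|\partial^a_{\xi}\partial^b_{\eta}\partial^c_{\sigma}\mathfrak{m}|\le 1$ for $a,b,c\in\{0,1,2\}$. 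The claimed inequality is scaling-invariant under this change of variables precisely because of the homogeneity of the hypothesis $|\partial^a_\xi\partial^b_\eta\partial^c_\sigma \mathfrak{m}|\le r_1^{-a}r_2^{-b}r_3^{-c}$, so it suffices to prove $\|B_\mathfrak{m}\|_{L^p\times L^q\to L^r}\lesssim 1$ in the normalized case.

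Next I would expand $\mathfrak{m}$ in a Fourier series on the cube: writing, for $(\xi,\eta,\sigma)\in[0,1]^3$,
\begin{align*}
\mathfrak{m}(\xi,\eta,\sigma) = \sum_{(j,k,l)\in\Z^3} c_{jkl}\, e^{2\pi i (j\xi + k\eta + l\sigma)},
\end{align*}
the regularity hypothesis (two derivatives bounded) gives, after integrating by parts twice in each variable, the decay $|c_{jkl}| \lesssim \langle j\rangle^{-2}\langle k\rangle^{-2}\langle l\rangle^{-2}$, so $\sum_{jkl}|c_{jkl}| < \infty$. (One should first multiply $\mathfrak{m}$ by a fixed smooth cutoff equal to $1$ on $[0,1]^3$ and supported in $[-1,2]^3$ so that the Fourier coefficients genuinely decay; this does not affect $B_\mathfrak{m}$ since $\widehat f, \widehat g$ are still restricted to the original supports.) Each exponential $e^{2\pi i(j\xi+k\eta+l\sigma)}$ produces a rank-one bilinear operator $B_{jkl}(f,g)(x) = (M_{-j}f)(x)\cdot\text{(something)}$; more precisely, $\widehat{\mathcal F}^{-1}_{\xi\to x}\big[e^{2\pi ij\xi}\iint e^{2\pi i k\eta}\widehat f(\eta) e^{2\pi il\sigma}\widehat g(\sigma)\,d\eta\,d\sigma\big](x) = f(x+2\pi k)\,g(x+2\pi l)$ evaluated with an overall shift by $2\pi j$, i.e. $B_{jkl}(f,g)(x) = f(x + 2\pi(k - j))\, g(x + 2\pi(l-j))$, which obeys $\|B_{jkl}(f,g)\|_{L^r} \le \|f\|_{L^p}\|g\|_{L^q}$ for $\frac1p+\frac1q=\frac1r$ by H\"older's inequality, with constant $1$ uniformly in $(j,k,l)$. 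Summing over $(j,k,l)$ against the absolutely summable coefficients $c_{jkl}$ yields $\|B_\mathfrak{m}\|_{L^p\times L^q\to L^r}\le \sum_{jkl}|c_{jkl}| \lesssim 1$.

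The main (minor) obstacle is purely bookkeeping: making the reduction to $[0,1]^3$ clean, inserting the auxiliary cutoff so that the Fourier series genuinely converges absolutely, and tracking that the modulation and translation factors introduced along the way are isometries on every $L^p$. There is no deep analytic difficulty here --- this is the standard Coifman--Meyer argument specialized to symbols that are already frequency-localized, and the statement is exactly what is needed to conclude the estimate of \eqref{cormoran2} from the derivative bounds \eqref{regR2m}.
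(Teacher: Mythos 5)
The paper does not actually prove Lemma \ref{lemBm} --- it is invoked as ``classical'' --- so your argument has to stand on its own, and it has two genuine problems. The first is the claim that the estimate is scaling-invariant under the reduction to the unit cube. It is not: writing $\mathfrak{m}(\xi,\eta,\sigma)=\mathfrak{n}(\xi/r_1,\eta/r_2,\sigma/r_3)$ and rescaling $f,g$ accordingly, the Jacobians $d\eta\,d\sigma$, the $L^p\times L^q$ rescaling of the inputs and the $L^r$ rescaling of the output combine to give
$$
\| B_\mathfrak{m}\|_{L^p\times L^q\to L^r} \;=\; r_1^{1/r'}\,r_2^{1/p}\,r_3^{1/q}\,\| B_\mathfrak{n}\|_{L^p\times L^q\to L^r},
$$
and with $\tfrac1p+\tfrac1q=\tfrac1r$ and $r_1=r_2=r_3=R$ this prefactor equals $R$. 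It is attained: for the tensor symbol $\mathfrak{m}=\phi(\xi/R)\phi(\eta/R)\phi(\sigma/R)$ one has $B_\mathfrak{m}(f,g)(x)=R\,\check{\phi}(Rx)\cdot\langle\widehat f,\phi(\cdot/R)\rangle\cdot\langle\widehat g,\phi(\cdot/R)\rangle$, whose operator norm is comparable to $R$. So the conclusion $\lesssim 1$ requires $r_1^{1/r'}r_2^{1/p}r_3^{1/q}\lesssim 1$, which holds in the regime where the lemma is actually used (in \eqref{regR2m} one has $r_i=2^{2J}\leq 1$) but not for arbitrary boxes; a proof that ``reduces to the unit cube by scale invariance'' is proving a statement that is false in general.

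The second, more basic, problem is the identification of the elementary operators in the Fourier-series decomposition. Since $\xi,\eta,\sigma$ are three \emph{independent} variables --- there is no constraint $\xi=\eta+\sigma$ built into $B_\mathfrak{m}$ --- the pure exponential symbol gives
$$
\widehat{\mathcal{F}}^{-1}_{\xi\to x}\Big[e^{ij\xi}\iint e^{ik\eta}\widehat f(\eta)\,e^{il\sigma}\widehat g(\sigma)\,d\eta\,d\sigma\Big]
= c\,\delta(x+j)\,f(k)\,g(l),
$$
a Dirac mass times point values, not the product of translates $f(x+k-j)g(x+l-j)$. The latter is what the Coifman--Meyer argument produces for a pseudo-product $\iint e^{ix(\eta+\sigma)}\mathfrak{m}(\eta,\sigma)\widehat f(\eta)\widehat g(\sigma)\,d\eta\,d\sigma$, which is a different operator; you have transplanted that computation to an operator without the convolution structure, and term by term your series consists of unbounded (distribution-valued) operators. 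The expansion can be salvaged by keeping a cutoff $\chi(\xi)\chi(\eta)\chi(\sigma)$ (equal to $1$ on the support of $\mathfrak{m}$) in every term: each term is then the rank-one operator $c_{jkl}\,\check{\chi}(x+j)\,\langle f,h_k\rangle\,\langle g,h_l\rangle$, bounded by $\|\check{\chi}\|_{L^r}\|\widehat{\chi}\|_{L^{p'}}\|\widehat{\chi}\|_{L^{q'}}$ uniformly in $(j,k,l)$ --- and these three norms are precisely where the factor $r_1^{1/r'}r_2^{1/p}r_3^{1/q}$ of the previous paragraph comes from. Alternatively, skip the series altogether: $B_\mathfrak{m}(f,g)(x)=\iint\mu(-x,y,z)f(y)g(z)\,dy\,dz$ with $\mu$ the inverse Fourier transform of $\mathfrak{m}$; the hypotheses (two derivatives in each variable separately) give $|\mu(y_1,y_2,y_3)|\lesssim r_1r_2r_3\langle r_1y_1\rangle^{-2}\langle r_2y_2\rangle^{-2}\langle r_3y_3\rangle^{-2}$, and H\"older in $(y,z)$ followed by the $L^r_x$ norm yields the corrected bound directly.
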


Letting $\delta$ be small, and denoting $\infty- := \delta^{-1}$ and $2+ := 2(1-2\delta)^{-1}$, 
we estimate using Lemma \ref{lemBm} and \eqref{regR2m}, the boundedness of wave operators and Sobolev's embedding:
\begin{align*}
{\left\| R_{m,2} \right\|}_{L^2} 
  & \lesssim  \int_{s \approx 2^m} 2^{140 \alpha m} \cdot 
  {\big\| \whatF^{-1} \wtF  e^{-is B} \partial_s f \big\|}_{L^{\infty-}} 
  \cdot {\| \whatF^{-1} \partial_\sigma \widetilde f \|}_{L^{2+}} \,ds
  \\
  & \lesssim 2^m  2^{140 \alpha m} \sup_{s \approx 2^m} \cdot 
  {\big\| e^{-is B} \partial_s f \big\|}_{L^{\infty-}} 
  \cdot {\| \jsig \partial_\sigma \widetilde f \|}_{L^2} 
  \\ 
  & \lesssim 2^m \cdot 2^{140 \alpha m} \cdot \e_1^2 2^{-(3/2)m + 10\alpha m} \cdot \e_1 2^{\alpha m}
\end{align*}
where, for the last inequality we have used interpolation between \eqref{dsfL^infty} and \eqref{dsfL^2},
picking $\delta$ small enough.
The bound above for $R_{m,2}$ is more than sufficient compared to the right-hand side of \eqref{secregmain}
if $\alpha$ is small enough. 

The remaining term \eqref{cormoran1} can be handled similarly, since the 
bilinear symbol is just $\partial_\s \mathfrak{m}$ which satisfies estimates like \eqref{regR2m}
with an additional harmless factor of $2^{-2J} = 2^{20\alpha m}$.

\medskip
\section{The main singular interaction}\label{secCS}
\label{TMSI}
We consider here the singular cubic terms defined in \eqref{CubicS}. 
Their kernel contains either a Dirac $\delta(p)$, or a principal value of $\frac{1}{p}$. 
We focus on the latter case, which is slightly more involved, and thus consider that
\begin{align}\label{Cubic2ot1}
\begin{split}
\mathcal{C}^S(a,b,c)(t,\xi)
&:= \iiint e^{it \Phi_{\iota_1\iota_2 \iota_3}(\xi,\eta,\sigma,\theta)} 
  \mathfrak{m}(\xi,\eta,\sigma,\theta) \wt{a}(t,\eta) \wt{b}(t,\sigma)  
  \wt{c}(t,\theta) \frac{\widehat{\phi}(p)}{p} \, d\eta \, d\sigma \,d\theta,
\\
\Phi_{\iota_1\iota_2 \iota_3}(\xi,\eta,\sigma,\theta) & = \jxi - \iota_1 \jeta
  - \iota_2 \langle \sigma \rangle - \iota_3 \langle \theta \rangle, 
\qquad p = \xi - \lambda \eta - \mu \sigma - \nu \theta.
\end{split}
\end{align}
We aim to show
\begin{align}\label{Cubic2otest}
{\Big\| \jxi \partial_\xi \int_0^t \mathcal{C}^S(f, f, f)(s) \, ds \Big\|}_{L^2}
  \lesssim \e_1^3 \jt^{\alpha}.
\end{align}

Observe that
\begin{align}\label{Cubotherid}
(\jxi \partial_\xi + X_{\eta,\s,\theta}) \Phi_{\iota_1, \iota_2 ,\iota_3}(\xi,\eta,\sigma,\theta) = p,
 \qquad X_{\eta,\s,\theta} := \lambda \iota_1\jeta \partial_\eta + \mu \iota_2\jsig \partial_\sigma
  + \nu \iota_3\langle \theta \rangle \partial_\theta.
\end{align}
When applying $\jxi \partial_\xi$ to \eqref{Cubic2ot1}, we can use this identity 
to integrate by parts in $\eta,\sigma$ and $\theta$.
Since the adjoint satisfies $X_{\eta,\s,\theta}^\ast = - X_{\eta,\s,\theta} + \{ \mbox{lower order terms} \}$ 
we see that
\begin{subequations}\label{Cubic2ot5}
\begin{align}\label{Cubic2ot5.1}
& \jxi \partial_\xi\mathcal{C}^S(f,f,f)(t,\xi)
= i t \iiint e^{it \Phi_{\iota_1\iota_2 \iota_3}(\xi,\eta,\sigma,\theta)} \mathfrak{m}(\xi,\eta,\sigma,\theta) 
\, \wt{f}(\eta) \wt{f}(\sigma)  \wt{f}(\theta) 
  \widehat{\phi}(p) \, d\eta \, d\sigma \,d\theta
\\
\label{Cubic2ot5.2}
& + \iiint e^{it \Phi_{\iota_1\iota_2 \iota_3}(\xi,\eta,\sigma,\theta)} \mathfrak{m}(\xi,\eta,\sigma,\theta) 
  X_{\eta,\s,\theta} \big(\wt{f}(\eta) \wt{f}(\sigma)  \wt{f}(\theta) \big)
  \frac{\widehat{\phi}(p)}{p} \, d\eta \, d\sigma \,d\theta
\\
\label{Cubic2ot5.3}
& + \iiint e^{it \Phi_{\iota_1\iota_2 \iota_3}(\xi,\eta,\sigma,\theta)} \mathfrak{m}(\xi,\eta,\sigma,\theta) 
\, \wt{f}(\eta) \wt{f}(\sigma)  \wt{f}(\theta) 
  \, \big( \jxi \partial_\xi + X_{\eta,\s,\theta} \big) \Big[ \frac{\widehat{\phi}(p)}{p}\Big] \, d\eta \, d\sigma \,d\theta 
  \\
  \label{Cubic2ot5.4}
  & +  \iiint e^{it \Phi_{\iota_1\iota_2 \iota_3}(\xi,\eta,\sigma,\theta)} 
  (\langle \xi \rangle \partial_\xi + X_{\eta,\s,\theta}) \, \mathfrak{m}(\xi,\eta,\sigma,\theta) 
\, \wt{f}(\eta) \wt{f}(\sigma)  \wt{f}(\theta) 
  \, \frac{\widehat{\phi}(p)}{p} \, d\eta \, d\sigma \,d\theta 
  \\
 & \nonumber
 + \{ \mbox{lower order terms} \}.
\end{align}
\end{subequations}

\medskip
\noindent
{\it Estimate of \eqref{Cubic2ot5.1}}.
The first term in \eqref{Cubic2ot5} does not have a singular kernel and can be estimated
integrating by parts in the ``uncorrelated'' variables $\eta,\sigma$ and $\theta$,
relying on the vanishing of $\widetilde{f}(0)$ 
and of $\mathfrak{m}(\xi,\eta,\sigma,\theta)$ when $\eta$, $\sigma$, or $\theta$ is zero, see \eqref{cubicSm'}.
(Each integration in one of the three variables is also similar to the argument in the proof of \eqref{locdec1}.)
These integration by parts arguments give the bound $\| \eqref{Cubic2ot5.1} \|_{L^2} \lesssim \e_1^3 \jt^{-2+3\alpha}$,
consistent with \eqref{Cubic2otest}.


\medskip
\noindent
{\it Estimate of \eqref{Cubic2ot5.2}}.
For this term is suffices to use the H\"older estimate \eqref{lemCSend} in Lemma \ref{lemCS},
estimating in $L^2$ the profile that is hit by the derivative, and the other two in $L^\infty$:
$$
\left\| \int_0^t \eqref{Cubic2ot5.2} \,ds \right\|_{L^2} 
  \lesssim \int_0^t \| \jxi \partial_\xi \wt{f} \|_{L^2} 
  \frac{\e_1^2}{\js} \,ds \lesssim \e_1^3 \int_0^t \js^{\alpha-1} \,ds \lesssim \e_1^3 \jt^\alpha.
$$

\medskip
\noindent
{\it Estimate of \eqref{Cubic2ot5.3}}.
For this term we observe, see \eqref{Cubotherid} and \eqref{Cubic2ot1}, that
\begin{align*}
\big( \jxi \partial_\xi + X_{\eta,\s,\theta} \big) p =  \Phi_{\iota_1\iota_2\iota_3}(\xi,\eta,\s,\theta),
\end{align*}
hence
\begin{align}\label{Cubic2ot10}
\big( \jxi \partial_\xi + X_{\eta,\s,\theta} \big) \Big[ \frac{\widehat{\phi}(p)}{p}\Big] 
  = \Phi_{\iota_1\iota_2\iota_3}(\xi,\eta,\s,\theta) \partial_p \Big[ \frac{\widehat{\phi}(p)}{p}\Big]
\end{align}
in the sense of distributions.
We can then use the factor of $\Phi_{\iota_1\iota_2\iota_3}$ in the right-hand side above
to integrate by parts in $s$, and obtain
\begin{align}\label{Cubic2ot11}
\int_0^t i \eqref{Cubic2ot5.3} \, ds & = 
\iiint e^{is \Phi_{\iota_1\iota_2\iota_3}} \mathfrak{m}(\xi,\eta,\sigma,\theta)
\, \wt{f}(\eta) \wt{f}(\sigma)  \wt{f}(\theta) 
  \, \partial_p \frac{\widehat{\phi}(p)}{p}\, d\eta \, d\sigma \,d\theta \, \Big|_{s=0}^{s=t}
\\
\label{Cubic2ot12}
&   - \int_0^t \iiint e^{is \Phi_{\iota_1\iota_2\iota_3}} \mathfrak{m}(\xi,\eta,\sigma,\theta)
\, \partial_s \Big[ \wt{f}(\eta) \wt{f}(\sigma)  \wt{f}(\theta) \Big]
  \partial_p \frac{\widehat{\phi}(p)}{p} \, d\eta \, d\sigma \,d\theta \, ds. 
\end{align}
To estimate \eqref{Cubic2ot11} we convert the $\partial_p$ into $\partial_\eta$ and
integrate by parts in $\eta$. This gives
\begin{align*}
& \iiint e^{is \Phi_{\iota_1\iota_2\iota_3}} \mathfrak{m}(\xi,\eta,\sigma,\theta)
\, \wt{f}(\eta) \wt{f}(\sigma)  \wt{f}(\theta) 
  \, \partial_p \frac{\widehat{\phi}(p)}{p}\, d\eta \, d\sigma \,d\theta \, \Big|_{s=0}^{s=t}  \\
& \qquad\qquad \qquad=  - \iiint e^{is \Phi_{\iota_1\iota_2\iota_3}} \partial_\eta \mathfrak{m}(\xi,\eta,\sigma,\theta)
\, \wt{f}(\eta) \wt{f}(\sigma)  \wt{f}(\theta) 
  \,  \frac{\widehat{\phi}(p)}{p}\, d\eta \, d\sigma \,d\theta \, \Big|_{s=0}^{s=t}   \\   
&  \qquad\qquad \qquad\quad- \iiint e^{is \Phi_{\iota_1\iota_2\iota_3}} \mathfrak{m}(\xi,\eta,\sigma,\theta)
\,   \partial_\eta \wt{f}(\eta) \wt{f}(\sigma)  \wt{f}(\theta) 
  \,  \frac{\widehat{\phi}(p)}{p}\, d\eta \, d\sigma \,d\theta \, \Big|_{s=0}^{s=t}   \\   
  & \qquad\qquad \qquad\quad - \iiint i s  \partial_\eta \Phi  e^{is \Phi_{\iota_1\iota_2\iota_3}} \mathfrak{m}(\xi,\eta,\sigma,\theta)
\,    \wt{f}(\eta) \wt{f}(\sigma)  \wt{f}(\theta) 
  \,  \frac{\widehat{\phi}(p)}{p}\, d\eta \, d\sigma \,d\theta \, \Big|_{s=0}^{s=t}  
\end{align*}
Out of the three terms on the right-hand side, the worst is the last one when $s=t$, 
since it picked up a factor $t$ when the derivative hit the complex exponential. 
The $L^2$ norm of this term can be estimated using Lemma \ref{lemCS} (see also the comment below its statement)
by
$$
C t \frac{\varepsilon_1^2}{\jt} {\| f \|}_{L^2} \lesssim \varepsilon_1^3 \jt^\alpha,
$$
as desired.
We also refer the reader to \cite[Section 9.3]{KGV1d} for arguments similar to those above.

The term \eqref{Cubic2ot12} is similar to the one just treated. 
We may assume that $\partial_s$ hits $\wt{f}(\sigma)$.
Again we convert $\partial_p$ into $\partial_\eta$ and integrate by parts in $\eta$.
This causes a loss of $s$ when hitting the exponential phase
which is offset by an $L^\infty\times L^2\times L^\infty$ estimate
with $\partial_s \wt{f}$ placed in $L^2$ and giving $\e_1^2 \js^{-1+\alpha/2}$ decay by \eqref{dsfL^2}.

\medskip
\noindent
{\it Estimate of \eqref{Cubic2ot5.4}}. This can be treated through Lemma \ref{lemCS}.
The only small difficulty is the loss of derivatives resulting from the differentiation of the symbol
(see the description after \eqref{cubicSm'}),
but this is easily recovered using the $H^4$ a priori bound from Proposition \ref{propbootstrap}, and $p_0 < \alpha$.


\medskip
\section{The remainder terms}\label{secred}
We now explain how to deal with the remainder terms appearing after the renormalization \eqref{Renodtf}
and the various splittings of the nonlinear terms.


\smallskip
\subsection{Weighted norm and bootstrap for $g$}\label{bootg}
The a priori bootstrap estimates for $g=f+T(g,g)$ in Proposition \ref{propbootstrap} 
can be closed fairly easily using the slightly better a priori bounds on $f$.
To explain this, let us first observe that for all practical purposes
one can think that the operator $T$, defined in \eqref{defTiota}-\eqref{profile1}, 
essentially has the form 
\begin{align}\label{bootgTsimp0}
T(a,b) = e^{itB} \big( e^{-itB}a \, e^{-itB} b \big),     
\end{align}
and, in particular, satisfies standard H\"older bounds of the form
\begin{align}\label{bootgTsimp}
{\| e^{-itB} T(a,b) \|}_{L^r} \lesssim {\| e^{-itB}a \|}_{L^q} {\| e^{-itB}b \|}_{L^p}, \qquad 1/r=1/p+1/q,
  \quad r,p,q\in(1,\infty).
\end{align}
The validity of \eqref{bootgTsimp} follows from the fact that
the symbol in \eqref{defTiota} is a $\delta$ plus a 
$p.v.$ contribution near its singularity, see \eqref{propmuS}, 
divided by a smooth non-vanishing expression. We refer the reader to Section 6 of \cite{KGV1d}
for detailed lower bounds on $\Phi_{\iota_1\iota_2}$ and related symbol-type estimates;
in particular, see Subsection 6.3 in \cite{KGV1d} for the proof of the bound \eqref{bootgTsimp}
(in fact, a stronger version of it with a gain of almost a full derivative).
With the above notation simplification, we may assume that \eqref{profile1} reads
\begin{align}\label{gboot}
g = f + T(g,g) = f + e^{itB} (e^{-itB}g)^2.
\end{align}

The Sobolev type estimate for $g$ is easy to obtain directly from \eqref{gboot} and the  
a priori Sobolev bound for $f$ and $g$ in \eqref{bootstrap0}.
First, observe that by fixing an arbitrary small number $\delta>0$,
using Sobolev's embedding, and the boundedness of wave operators (see \eqref{Wopdef}) 
on $L^p$, we can estimate
\begin{align*}
 {\big\| e^{-itB} g \big\|}_{L^\infty} 
  \lesssim {\big\| e^{-itB} g \big\|}_{W^{\delta+,1/\delta}}
  \lesssim {\big\| e^{-it\langle \partial_x \rangle} \mathcal{W}^\ast g \big\|}_{W^{\delta+,1/\delta}}
  \lesssim \e_1 \jt^{-\frac{1}{2}+p_0};
\end{align*}
for the last inequality we have used interpolation between the bounds in the second line of \eqref{bootstrap0}.
Then we can estimate $\| B^4 (e^{-itB}g)^2 \|_{L^2} \lesssim \| B^4 g \|_{L^2} \| e^{-itB}g \|_{L^\infty}
\lesssim \e_1^2 \jt^{2p_0-1/2}$.

For the remaining $L^\infty$ estimate in \eqref{bootstrap0}
we fix again an arbitrary small number $\delta$
and similarly to above use Sobolev's embedding, followed by the boundedness of wave operators and 
the natural version of \eqref{bootgTsimp} with derivatives, to obtain
\begin{align*}
{\big\| e^{-it\langle \partial_x \rangle} 
  \mathbf{1}_{\pm}(D) \mathcal{W}^* T(g,g) \big\|}_{L^\infty} 
  & \lesssim
  {\big\| e^{-it\langle \partial_x \rangle} 
  \mathbf{1}_{\pm}(D) \mathcal{W}^* T(g,g) \big\|}_{W^{\delta+,1/\delta}}
  \\
  & \lesssim {\big\| e^{-itB} T(g,g) \big\|}_{W^{\delta+,1/\delta}}
  \lesssim \| e^{-itB} B^{\delta+}g \|^2_{L^{2/\delta}} \lesssim \e_1^2 \jt^{-1+2p_0},
\end{align*}
where we again used Sobolev-Gagliardo-Nirenberg interpolation for the last bound.
This proves the bounds for $g$ in \eqref{eqbootstrap}.


Let us also mention for later use that from \eqref{gboot} one can 
establish a weak weighted bound for $g$ of the form
\begin{align}\label{dxig}
{\| \jxi \partial_\xi \wt{g} \|}_{L^2_\xi} & \lesssim \e_1 \jt^{1/2+\alpha/10};
\end{align}
this is because applying $\partial_\xi$ to $\wt{T} (g, g)$ costs a factor of $t$ and $\|( e^{-itB}g )^2\|_{L^2}
\lesssim \e_1^2\jt^{-1/2}$.
The bound \eqref{dxig} is also helpful in estimating weighted norms of remainder terms, see Subsection \ref{wrem} below.
For more details on the bootstrap for $g$ and \eqref{dxig}, 
we refer the reader to Subsections 7.1 and 7.2 in \cite{KGV1d}.

\smallskip
\subsection{Weighted estimates for remainder terms}\label{wrem}
The remainder terms $\mathcal{R}$ that arise when we substitute $f$ for $g$ into the leading quadratic and cubic terms,
see \eqref{R}, are not hard to estimate in our current functional framework,
while their estimates are more lengthy in \cite{KGV1d}. We provide a few details below.
The two terms on the right-hand side of \eqref{RQ} 
can be thought of as cubic versions of the smooth quadratic terms $\mathcal{Q}_R$,
and essentially are of the form
\begin{align}\label{FRQ-0}
\mathcal{R}_Q[a,b,c](t,\xi) :=  \int_0^t \int_{(\R_+)^3} 
  e^{is \Phi_{\iota_1 \iota_2 \iota_3}(\xi,\eta,\sigma,\theta)} \wt{a}_{\iota_1}(\eta) 
  \wt{b}_{\iota_2}(\sigma) \wt{c}_{\iota_3}(\theta)
  \mathfrak{q}(\xi,\eta,\sigma,\theta) \,d\eta \, d\sigma \, d\theta  ds,
  \\
  \nonumber
  \mbox{for} \quad (a,b,c) = (g,g,g) \quad \mbox{or}  \quad (f,g,g),
\end{align} 
where $\Phi_{\iota_1\iota_2\iota_3}$ is as in \eqref{CubicS}, 
and $\mathfrak{q}$ is some smooth symbol that vanishes whenever $\eta\cdot\sigma\cdot\theta=0$.
To estimate $\mathcal{R}_Q[g,g,g]$ we can substitute one $g$ with $f$ via \eqref{gboot}, 
up to terms that are easier to estimate; for example, they are better versions of 
the terms \eqref{CS4} which we will discuss below.
We can then reduce matters to considering $\mathcal{R}_Q[f,g,g]$, 
and similarly substituting one more $g$ with $f$ 
we further reduce to $\mathcal{R}_Q[f,f,g]$.
For the cubic expression $\mathcal{R}_Q[f,f,g]$ we can apply simple integration by parts arguments in
each of the variables in distorted Fourier space, like those performed in Section \ref{sectionregular}
(see for example \eqref{reglow} and the expression resulting after integration by parts \eqref{macreuse1}-\eqref{macreuse3}).
With this operation, and using \eqref{dxig},
we obtain the bound $$\| \jxi \partial_\xi \mathcal{R}_Q[f,f,g](t) \|_{L^2} \lesssim s \, (\e_1 s^{-1+\alpha})^2 \cdot \e_1 s^{-1/2+\alpha/10},$$
and it follows that
%
%
\begin{align}\label{estRQ}
{\Big\| \jxi \partial_\xi \int_0^t \mathcal{R}_Q[f,g,g](s,\xi) \, ds \Big\|}_{L^2} \lesssim \e_1^3. 
\end{align}


Moving on to $\mathcal{R}_C$ in \eqref{RC}, we first observe that, in view of \eqref{CubicR}-\eqref{CubicRm}, 
the higher order remainder terms corresponding to $\mathcal{C}^R$ are quartic version of \eqref{FRQ-0},
and therefore easier to handle compare to the quartic terms we discuss below. 
The contributions corresponding to $\mathcal{C}^S$ 
are, up to a permutation of the inputs, of the form $\mathcal{C}^S(T(g,g),a,b)$ with $a,b = f$ or $g$.
In view of \eqref{gboot} and the H\"older estimates of Lemma \ref{lemCS},
we see that all these are essentially standard four-fold products of the form
\begin{align}\label{CS4}
\mathcal{R}_C[g,g,a,b] 
  \sim e^{itB}(e^{\iota_1itB}g \cdot e^{\iota_2itB}g 
  \cdot e^{\iota_3itB}a \cdot e^{\iota_4itB}b\big), \quad
  \mbox{for} \quad a,b = f \, \mbox{or} \, g,
\end{align}
where $\iota_j$, $j=1,\dots,4$ are signs.
Note that \eqref{CS4} is a quartic version of $\mathcal{C}^S$. 
Also note that the hardest term if the one with all $g$ inputs, $\mathcal{R}_C[g,g,g,g]$. 
For this, we can substitute one $g$ with $f$ 
via \eqref{gboot}
up to terms that are quintic and easier to estimate, reducing matters to 
estimating $\mathcal{R}_C[g,g,g,f]$;
then, similarly substituting one more $g$ with $f$ we can reduce to 
$\mathcal{R}_C[g,g,f,f]$.
To estimate $\mathcal{R}_C[g,g,f,f]$ 
we can use a ``commutation identity'' similar to the one in \eqref{Cubotherid}
which we used for the main singular cubic term.
In particular, we can distribute the $\partial_\xi$ derivative on the Fourier side 
and obtain identities analogous to those in \eqref{Cubic2ot5} 
(with four input function instead of three).
We can then estimate very similarly to Section \ref{TMSI} 
using \eqref{dxig}, the a priori bounds \eqref{bootstrap0}, 
\eqref{dsfL^2} and an identical estimate for $\partial_s g$, 
arriving at the bound 
\begin{align}\label{CS5}
{\Big\| \jxi \partial_\xi \int_0^t \mathcal{R}_C[g,g,f,f] 
  \, ds \Big\|}_{L^2}
  \lesssim \int_0^t {\big\| \partial_\xi \wt{g} \big\|}_{L^2} \cdot (\e_1 \js^{-1/2})^3 \, ds + \e_1^4
  \lesssim \e_1^4 \jt^{\alpha/10}.
\end{align}


\smallskip
\subsection{The Sobolev norm}\label{Sobolev}
The bootstrap on the Sobolev-type norm $\| \jxi^4 \wt{f} \|_{L^2}$ is substantially 
easier to obtain than the weighted bound.
The estimate for the cubic terms follows directly from Lemma \ref{lemCS} and the natural extension of \eqref{lemCS1} 
with derivatives; see \cite[Lemma 6.13]{KGV1d}.

A slightly less immediate argument is required to estimate the quadratic terms $\mathcal{Q}^R$.
For this we need to consider a term of the form
\begin{align}\label{Sobolev1} 
\begin{split}
I(s,\xi)
  & := \iint e^{is ( \jxi -   \jeta -  \jsig) } \, \langle \xi \rangle^4 \,\mathfrak{q}(\xi,\eta,\sigma) 
  \, \wt{f}(\eta) \wt{f}(\sigma) \, d\eta \, d\sigma
  \\
  & = 
  \iint_{\R^2_+} e^{is ( \jxi -   \jeta -   \jsig) } \wt{f}(\eta) \wt{f}(\sigma) \, \frac{\eta}{\jeta} \frac{\s}{\jsig} \frac{\langle \xi \rangle^4}{\jeta+\jsig}
  \mathfrak{q}'(\xi,\eta,\sigma) \,d\eta \, d\sigma,
\end{split}
\end{align}
see \eqref{QR}-\eqref{qsymbol}, where $\mathfrak{q}'$ satisfies \eqref{q'symbol}.
It suffices to bound \eqref{Sobolev1} in $L^2$ by $\e_1^2 \js^{-1+p_0}$.
Note that since $p_0 \ll \alpha$ this does not follow immediately from integration by parts in frequency.
We then proceed as follows.
First, we may assume, by the symmetry in $\eta$ and $\s$, and the decay of the kernel away from $\xi-\eta-\s=0$, 
that $1 \leq |\xi| \lesssim |\s| \leq |\eta|$.
Second, we can integrate by parts in $\s$ and estimate the main contribution by
\begin{align}\label{Sobolev2}
& \Big| \iint_{\R^2_+} \frac{1}{s} \, e^{is ( \jxi -   \jeta -   \jsig) } \, \wt{f}(\eta) \, \partial_\s \wt{f}(\sigma) \,
  \frac{\eta}{\jeta} \frac{\langle \xi \rangle^4}{\jeta+\jsig} \mathfrak{q}'(\xi,\eta,\sigma) \,d\eta \, d\sigma \Big|.
\end{align}
Then, we see that if we restrict the integration region to $|\eta| \gtrsim \js^{2\alpha}$, 
an application of Young's inequality suffices:
\begin{align*} 
& \quad {\Big\| \iint_{\R^2_+ \cap \{ |\eta| \gtrsim \js^{2\alpha} \}  } \frac{1}{s} \, e^{is ( \jxi -   \jeta -   \jsig) } 
  \, \wt{f}(\eta) \, \partial_\s \wt{f}(\sigma) \,
  \frac{\eta}{\jeta} \frac{\langle \xi \rangle^4}{\jeta+\jsig} \mathfrak{q}'(\xi,\eta,\sigma) \,d\eta \, d\sigma \Big\|}_{L^2} 
  \\
 &  \lesssim \frac{1}{s} 
  \Big\| \iint_{ \R^2_+ \cap \{ |\eta| \gtrsim \js^{2\alpha} \}  } \big| \jeta^4 \wt{f}(\eta) \big| \, \big| \partial_\s \wt{f}(\sigma) \big| 
  \, \frac{1}{\jeta}
  \frac{1}{\langle \xi -\eta -\sigma \rangle^{N}} \,d\eta \, d\sigma  \Big\|_{L^2} 
\\  
 & \lesssim s^{-1-2\alpha} {\| \jxi^4 \wt{f} \|}_{L^2} {\| \partial_\xi \wt{f} \|}_{L^1} \lesssim s^{-1-\alpha+p_0} \e_1^2.
\end{align*}
When instead $|\eta| \lesssim \js^{2\alpha}$, 
we can integrate by parts also in $\eta$ and bound the main contribution in $L^2_\xi$ by
\begin{align*}
& \quad  \Big\| \iint_{ \R^2_+ \cap \{ |\eta| \lesssim \js^{2\alpha} \}  } \frac{1}{s^2} \, e^{is ( \jxi -   \jeta -   \jsig) } \, \partial_\eta \wt{f}(\eta) 
  \, \partial_\s \wt{f}(\sigma) \,
  \frac{\langle \xi \rangle^4}{\jeta+\jsig} \mathfrak{q}'(\xi,\eta,\sigma) \,d\eta \, d\sigma  \Big\|_{L^2} 
  \\ 
 & \lesssim s^{-2}  \Big\|  \iint_{  \R^2_+ \cap \{ |\eta| \lesssim \js^{2\alpha} }  \big| \partial_\eta \wt{f}(\eta) \big| \, 
  \big| \partial_\s \wt{f}(\sigma) \big| 
  \, {\jeta}^3 \frac{1}{\langle \xi -\eta -\sigma \rangle^{N}} \,d\eta \, d\sigma  \Big\|_{L^2} 
  \\
&  \lesssim s^{-2+6\alpha} 
  {\| \partial_\xi \wt{f} \|}_{L^2} {\| \partial_\xi \wt{f} \|}_{L^1}
  \lesssim s^{-2+8\alpha} \e_1^2,
\end{align*}
which is more than sufficient.

The remainder terms \eqref{R} can be handled similarly, and in fact are even easier to estimate 
via direct applications of the H\"older bounds of Lemmas \ref{lemQR} and \ref{lemCS}
and arguing as in Subsection \ref{bootg} to estimate the $T$ operator.

\smallskip
\subsection{Fourier $L^\infty$ estimates and asymptotics}\label{Linfty}
Finally, we briefly discuss how to close the bootstrap for the norm ${\| \jxi^{3/2} \wt{f} \|}_{L^\infty}$ 
in \eqref{eqbootstrap}.
The simple observation is that since our a priori assumptions \eqref{bootstrap0} are stronger than the
assumptions made in \cite{KGV1d}, but the conclusion we want is exactly the same for this norm, 
all of the arguments used in \cite{KGV1d} apply here.
In particular, we can replace the a priori bound (10.31) in \cite[Sec. 10.3]{KGV1d}, that is
(recall the notation \eqref{LPnot})
\begin{equation*}
{\| \varphi_{[-5, 5]} \partial_\xi \widetilde{f} \|}_{L^2} \lesssim \varepsilon_1 \langle t \rangle^{\rho}, 
\end{equation*}
in which $\rho = \alpha + \beta \gamma = 1/4 -$, by the stronger
\begin{equation*}
{\| \varphi_{[-5, 5]} \partial_\xi \widetilde{f} \|}_{L^2} \lesssim \varepsilon_1 \langle t \rangle^{\alpha}, 
\end{equation*}
with $\alpha > p_0$ being small as in Proposition \ref{propbootstrap}.
Then, under the a priori assumption \eqref{bootstrap0} for $\wt{f}$, we can prove
\begin{align}\label{Linfty1}
{\| \jxi^{3/2} \wt{f} \|}_{L^\infty} \leq C\e_0 + C\e_1^2.
\end{align}
The argument used to obtain \eqref{Linfty1} is based on the derivation of an (Hamiltonian) ODE 
for $\wt{f}$ which relies on precise asymptotics for the cubic terms $\mathcal{C}^S$;  
this ODE also gives the asymptotic modified scattering behavior in \eqref{asy}. 
We refer the reader to Proposition 10.1 and \cite[Sec 10]{KGV1d} for detailed statements and proofs.

As it turns out, we believe that the arguments that have been used in \cite{KGV1d} to bound the cubic terms, and obtain
their asympotics in time, cannot be simplified much.
One can, however, simplify the estimates for the other higher order remainder terms $\mathcal{R}(f,g)$,
see \eqref{Renodtf}-\eqref{RC}, as we explain below.

First, we observe that the weighted bound \eqref{QRmainbound} for $\mathcal{Q}^R$ and the 
weighted bound \eqref{estRQ} for $\mathcal{R}_Q$, 
imply the Fourier $L^\infty$ bound for these terms via the interpolation 
$\| \jxi^{3/2} h \|_{L^\infty} \lesssim \| \jxi \partial_\xi h \|_{L^2}^{1/2} \| \jxi^2 h \|_{L^2}^{1/2}$,
and the fact that a bound on $\| \jxi^2 h \|_{L^2}^{1/2}$ which decays in time
is easy to obtain for $h = \mathcal{Q}^R$ or $\mathcal{R}_Q$.
Indeed, one can obtain $\| \jxi^2  \int_0^t \mathcal{Q}^R ds \|_{L^2} \lesssim \e_1^2$ 
proceeding in a similar way as in the estimate of 
$\| \jxi  \mathcal{Q}^R \|_{L^2}$ in Lemma \ref{ibis}; see the proof of \eqref{dsfpr1'}.
Hence, we can bound
\begin{equation*}
\begin{split}
{\Big\| \jxi^{3/2} \int_0^t \mathcal{Q}^R ds \Big\|}_{L^\infty} 
&  \lesssim {\Big\| \jxi \partial_\xi  \int_0^t \mathcal{Q}^R ds \Big\|}_{L^2}^{1/2} 
  {\Big\| \jxi^2  \int_0^t \mathcal{Q}^R ds \Big\|}_{L^2}^{1/2}
  \lesssim \e_1^2.
\end{split}
\end{equation*}
For $\mathcal{R}_Q$, we can obtain $\| \jxi^2  \int_0^t \mathcal{R}_Q ds \|_{L^2} \lesssim \e_1^2 $ 
via a similar argument as in the estimate \eqref{estRQ},
since $\mathcal{R}_Q$ is a cubic version of the quadratic terms $\mathcal{Q}^R$, 
and then conclude using interpolation as above.

Finally, for the term $\mathcal{R}_C$ in \eqref{RC} we can first substitute all the $g$'s by $f$'s, 
noticing that the error terms generated in this process are higher order remainders. 
Then, we can estimate very similarly to Section \ref{TMSI} using the a priori bounds \eqref{bootstrap0}, 
and \eqref{dsfL^2} to obtain the following analogue of \eqref{CS5} with $f$ instead of $g$:
\begin{align*}
{\Big\| \jxi \partial_\xi \int_0^t \mathcal{R}_C[f,f,f,f] 
  \, ds \Big\|}_{L^2}
  \lesssim \int_0^t {\big\| \partial_\xi \wt{f} \big\|}_{L^2} \cdot (\e_1 \js^{-1/2})^3 \, ds + \e_1^4
  \lesssim \e_1^4.
\end{align*}
Then we obtain $\| \jxi^{3/2}  \int_0^t \mathcal{R}_C ds \|_{L^\infty} \lesssim \e_1^4$
via interpolation with a simpler Sobolev bound as above.

All the terms on the right-hand side of \eqref{Renodtf} are thus accounted for,
Proposition \ref{propbootstrap} follows, and the main Theorem \ref{mainthm} is proven.

%
%

\smallskip
\subsection*{Data availability statement}
This manuscript has no associated data.

\smallskip

\end{document}